\newcommand*{\rom}[1]{\expandafter\@slowromancap\romannumeral #1@}
\theoremstyle{plain}
\newtheorem{theorem}{Theorem}[section]
\newtheorem{proposition}[theorem]{Proposition}
\newtheorem{lemma}[theorem]{Lemma}
\newtheorem{corollary}[theorem]{Corollary}
\theoremstyle{definition}
\newtheorem{definition}[theorem]{Definition}
\newtheorem{assumption}[theorem]{Assumption}
\theoremstyle{remark}
\newtheorem{remark}[theorem]{Remark}
\icmltitlerunning{Accelerated Stochastic Optimization Methods under Quasar-convexity}
\begin{document}

\twocolumn[
\icmltitle{Accelerated Stochastic Optimization Methods under Quasar-convexity}



\icmlsetsymbol{equal}{*}

\begin{icmlauthorlist}
\icmlauthor{Qiang Fu}{QF}
\icmlauthor{Dongchu Xu}{DCX}
\icmlauthor{Ashia Wilson}{AW}
\end{icmlauthorlist}

\icmlaffiliation{QF}{Sun Yat-sen University, Guangzhou, China}
\icmlaffiliation{DCX}{Harvard University, Cambridge, MA, USA}
\icmlaffiliation{AW}{MIT, Cambridge, MA, USA}

\icmlcorrespondingauthor{Qiang Fu}{fuqiang7@mail2.sysu.edu.cn}
\icmlcorrespondingauthor{Ashia Wilson}{ashia07@mit.edu}
\icmlkeywords{Machine Learning, ICML}

\vskip 0.3in
]



\printAffiliationsAndNotice{}  

\begin{abstract}
Non-convex optimization plays a key role in a growing number of machine learning applications. This motivates the identification of specialized structure that enables sharper theoretical analysis. One such identified structure is quasar-convexity, a non-convex generalization of convexity that subsumes convex functions. Existing algorithms for minimizing quasar-convex functions in the stochastic setting  have either high complexity or slow convergence, which prompts us to derive a new class of stochastic methods for optimizing smooth quasar-convex functions. We demonstrate that our algorithms have fast convergence and outperform existing algorithms on several examples, including the classical problem of learning linear dynamical systems. We also present a unified analysis of our newly proposed algorithms and a previously studied deterministic algorithm.
\end{abstract}

\section{Introduction}
\label{submission}

Momentum is one of the most widely used techniques for speeding up the convergence rate of optimization methods. Many deterministic and stochastic momentum based algorithms have been proposed for optimizing (strongly) convex functions, e.g. accelerated gradient descent (AGD) \cite{Nesterov1983, nesterov2003introductory, beck2009}, accelerated stochastic gradient descent (ASGD) \cite{ghadimi2012optimal, ghadimi2016, kulunchakov2020}, accelerated stochastic variance reduced gradient (ASVRG) methods and their related variants \cite{nitanda2016, allen2017katyusha, kulunchakov2020}.

While much of our understanding of modern optimization algorithms relies on the ability to leverage the convexity of the objective function, a growing number of modern machine learning applications rely on non-convex optimization. Unfortunately, the theoretically guaranteed improvement for convex functions that accelerated algorithms have do not apply to many real-world scenarios.
For many smooth non-convex optimization problems, we only have guarantees for finding  stationary points instead of the global minimizer. However, some non-convex functions involved in several popular optimization problems such as low-rank matrix problems, deep learning and reinforcement learning, have special structure and exhibit convex-like properties \cite{ge2016matrix, bartlett2018deep, mei2020}, which makes it possible to find approximate global minimizers of these structured non-convex functions.

In this paper, we develop two accelerated stochastic optimization methods for optimizing quasar-convex functions. A quasar-convex function is parameterized by a constant $\gamma\in(0,1]$. $\gamma=1$ implies the function is star-convex, which is a relaxation of convexity \cite{nesterov2006cubic}. Quasar-convexity was first proposed in \citet{hardt}. They prove that the objective of learning linear dynamical systems is quasar-convex under several mild assumptions. \citet{zhou2019sgd} and \citet{kleinberg2018} also provide evidence to suggest that loss function of neural networks may conform to star-convexity in large neighborhoods of the minimizers. Several recent papers propose effective deterministic methods for minimizing $L$-smooth and $\gamma$-quasar-convex functions. While gradient descent (GD) and stochastic gradient descent (SGD) need $O(\gamma^{-1}\epsilon^{-1})$ and $O(\gamma^{-2}\epsilon^{-2})$ iterations to yield an $\epsilon$-approximate solution \citealt{guminov, gower2021}, the algorithms developed by \citet{guminov} and \citet{hinder} need $O(\gamma^{-1}\epsilon^{-1/2})$ iterations and the algorithm developed by~\citet{nesterov_primal} needs $O(\gamma^{-3/2}\epsilon^{-1/2})$ iterations. \citet{hinder} also introduce a new metric in terms of the total number of function and gradient evaluations. In order to compute an $\epsilon$-approximate solution, the method of \citet{hinder} requires $O(\gamma^{-1}\epsilon^{-1/2}\log(\gamma^{-1}\epsilon^{-1}))$ total evaluations for $\gamma$-quasar-convex functions and $O(\gamma^{-1}\kappa^{1/2}\log(\gamma^{-1}\kappa)\log(\gamma^{-1}\epsilon^{-1}))$\footnote{$\kappa\triangleq L/\mu$ is the condition number.} total evaluations for $\mu$-strongly $\gamma$-quasar-convex functions.

Many optimization problems in machine learning can be expressed in the following format
\begin{equation}
\label{problem}
    \underset{x\in\mathbb{R}^d}{\min} \left[f(x)=\frac{1}{n}\sum_{i=1}^n f_i(x)\right],
\end{equation}
which makes them particularly well-suited for stochastic optimization methods. When $n$ is large, applying deterministic algorithms will lead to high computational cost due to the full gradient and function value access required per iteration. Therefore, motivated by ASGD, ASVRG in the convex setting as well as the contributions of \citet{hinder}, we propose both a quasar-accelerated stochastic gradient descent (QASGD) method and a quasar-accelerated stochastic variance reduced gradient (QASVRG) method for solving \eqref{problem}, where the objective function $f$ is $L$-smooth and (strongly) quasar-convex. We also present a unified energy-based framework to analyze the convergence of these newly proposed accelerated algorithms, drawing inspiration from the unified analyses developed by \citet{wilson2021} and \citet{kulunchakov2020}.

Our principal contributions are three-fold.
\begin{itemize}
\item \textit{QASGD}: We introduce QASGD with momentum as the acceleration technique. Under a bounded gradient assumption~\ref{boundedgrad}, we prove that QASGD achieves convergence rates of $O\left(\frac{L}{t^2}+\frac{\sigma}{\gamma\sqrt{t}}+\frac{\epsilon}{2}\right)$ for general quasar-convex functions and $O\left((1+{\gamma^2}/{16})^{-t}+\frac{\sigma^2}{\gamma^2 t}\right)$ for strongly quasar-convex functions, where $\epsilon$ comes from a binary line search. We empirically demonstrate that on learning time-invariant dynamical systems, QASGD outperforms several existing proposed methods.
\item \textit{QASVRG}: We introduce QASVRG in a mini-batch setting, which is an extension of \citet{nitanda2016} to quasar-convexity with momentum as the acceleration technique. Variance reduction and mini-batches are employed to compute the stochastic gradient per iteration. Under an interpolation assumption \ref{interpolation} and a compactness assumption \ref{compactset}, QASVRG achieves an overall complexity\footnote{Here we use overall complexity to denote the total number of function and gradient evaluations} of
$\widetilde{O}\left(n+\min\left\{\frac{\kappa}{\gamma^2},\frac{n\sqrt{\kappa}}{\gamma}\right\}\right)$ and $\widetilde{O}\left(n+\min\left\{\frac{LR^2}{\gamma\epsilon},\frac{nR}{\gamma}\sqrt{\frac{L}{\epsilon}}\right\}\right)$
for strongly quasar-convex functions and general quasar-convex functions. We also propose an alternative scheme for strongly quasar-convex functions with different parameter choice (Option \rom{2} in Table \ref{parameterofQASVRG}), whose precise convergence rates are postponed to Theorem \ref{complexASVRG}. These two schemes have different dependency on $\kappa$ and $\epsilon$ and thus are suitable to different application scenarios. When $n$ is large, our complexity is significantly lower than the complexity of AGD in \citet{hinder}.
\item \textit{Lyapunov analysis}: We present a unified analysis for our proposed algorithms under quasar-convexity and smoothness using a standard Lyapunov argument. Additionally, we incorporate the AGD method proposed in \citet{hinder} in our energy-based framework, which we rename QAGD (quasar-accelerated gradient descent). Different from AGD in \citet{hinder}, QAGD admits the Bregman divergence which is more general than the Euclidean distance. 
\end{itemize}
The remainder of this paper is organized as follows. Section 2 presents more details about quasar-convexity, related assumptions, and previously proposed methods. Section 3 presents the main algorithms of QAGD, QASGD and QASVRG for (strongly) quasar-convex functions and their convergence analysis. Section 4 describes our simulations verifying the effectiveness of our proposed algorithms.
\paragraph{Notation} The following notation is used throughout the paper: 
$D_h(x,y)\triangleq h(x)-h(y)-\langle\nabla h(y),x-y\rangle$ denotes the Bregman divergence between $x,y\in\mathbb{R}^d$, where h is an arbitrary $\bar{\mu}$-strongly convex function. $[n]\triangleq\{1,2,...,n\}$
$\log^+()\triangleq\max\{\log(),1\}$,
$\|\cdot\|\triangleq\|\cdot\|_2$, $\bar{a}_k\triangleq A_{k+1}-A_k$, $\bar{b}_k\triangleq B_{k+1}-B_k$, $\kappa\triangleq L/\bar{\mu}\mu$,
$\mathcal{E}_k\triangleq f(y_k)-f(x^*)$. 
$a\simeq b$ signifies $a=O(b)$.
$\langle,\rangle$ represents the inner product. $\mathcal{X}^*$ is the solution set of \eqref{problem} which we assume is not empty, and a point $x$ is an $\epsilon$-approximate solution if $f(x)-f(x^*)\leq\epsilon$ for $x^*\in\mathcal{X}^*$. $R$ denotes the upper bound of the initial distance such that $D_h(x^*,x_0)\leq R^2$. We assume $f(x^*)\geq 0$ without loss of generality. $f$ is $L$-smooth, if $\|\nabla f(x)-\nabla f(y)\|\leq L\|x-y\|$ for all $x,y\in\mathbb{R}^d$. $\mathcal{Q}_{\mu\gamma},\mathcal{F}_L$ respectively denote the set of $\mu$-strongly $\gamma$-quasar-convex functions and the set of $L$-smooth functions, and $\mathcal{Q}_{\mu\gamma}$ reduces to the set of $\gamma$-quasar-convex functions when $\mu=0$. We use $O(\cdot)$ to hide constants and $\widetilde{O}(\cdot)$ to hide logarithmic factors and constants.

\section{Background}
There has been growing interest in exploiting structure present in large classes of non-convex functions. One such structure is quasar-convexity and strong quasar-convexity, defined as follows. 
\begin{definition}[Quasar-convexity]
Let $\gamma\in(0,1]$ and let $x^*$ be a minimizer of the differentiable function $f:\mathbb{R}^d\rightarrow \mathbb{R}.$ A function is $\gamma$-quasar-convex with respect to $x^*$ if for all $x\in\mathbb{R}^d$,
\begin{equation}
\label{quasar-convex}
    f(x^*)\geq f(x)+\frac{1}{\gamma}\langle\nabla f(x),x^*-x\rangle.
\end{equation}
For $\mu>0$, a function is $\mu$-strongly $\gamma$-quasar-convex with respect to $x^*$ if for all $x\in\mathbb{R}^d$,
\begin{equation}
\label{strongly quasar-convex}
    f(x^*)\geq f(x)+\frac{1}{\gamma}\langle\nabla f(x),x^*-x\rangle+\frac{\mu}{2}\|x^*-x\|^2.
\end{equation}
\end{definition}
\subsection{Examples}
We introduce a classical example in \citet{hardt} of learning linear dynamical systems (LDS). Consider the following time-invariant linear dynamical system
\begin{subequations}
    \begin{align}
    h_{t+1}&=Ah_t+Bx_t\label{trueLDS1}\\
    y_t&=Ch_t+Dx_t+\xi_t,\label{trueLDS2}
    \end{align}
\end{subequations}
where $x_t\in\mathbb{R},y_t\in\mathbb{R}$ are the input and output of time $t$; $\xi_t$ is a random perturbation sampled i.i.d from a distribution; $h_t\in\mathbb{R}^d$ is the hidden state and $\Theta\triangleq(A,B,C,D)\in\mathbb{R}^{d\times d}\times\mathbb{R}^{d\times 1}\times\mathbb{R}^{1\times d}\times\mathbb{R}$ is the true parameter that we aim to learn. Assuming we have $N$ pairs of training examples $S=\{(x^{(1)},y^{(1)}),...,(x^{(N)},y^{(N)})\}$ where each input sequence $x\in\mathbb{R}^T$ is sampled from a distribution and $y$ is the corresponding output of the system above, we fit these training examples to the following model
\begin{subequations}
    \begin{align}
    \hat{h}_{t+1}&=\hat{A}\hat{h}_t+\hat{B}x_t\label{modelLDS1}\\
    \hat{y}_t&=\hat{C}\hat{h}_t+\hat{D}x_t,\label{modelLDS2}
    \end{align}
\end{subequations}
which is governed by $\Theta\triangleq(\hat{A},\hat{B},\hat{C},\hat{D})$. According to the training examples and the model system, we consider the following optimization problem
\begin{equation}
\label{learningLDS}
    \min \left\{F(\hat{\Theta})=\mathbb{E}_{\{x_t\},\{\xi_t\}}\left[\frac{1}{T}\sum_{t=1}^T\|\hat{y}_t-y_t\|^2\right]\right\}.
\end{equation}
\citet{hardt} demonstrate that the objective function $F(\hat{\Theta})$ is weakly smooth and quasar-convex with respect to $\Theta$ under some mild conditions.

We introduce another example of generalized linear models (GLM). Consider the following square loss minimization problem
\begin{equation}
\label{GLM}
\min\left\{f(w):=\mathbb{E}_{x\sim\mathcal{D}}\left[\frac{1}{2}(\sigma(w^{\mathsf{T}}x)-y)^2\right]\right\}
\end{equation}
where $\sigma(\cdot):\mathbb{R}\rightarrow\mathbb{R}$ is the link function; $x\in\mathbb{R}^d$ is i.i.d from $\mathcal{D}$ and there exists $w_*\in\mathbb{R}^d$ such that $y=\sigma({w_*}^{\mathsf{T}}x)$. The quasar-convex structure of $f(w)$ has been exploited in several literature \cite{foster2018uniform, ma2020local, wang2023continuized}.
 
\begin{table*}[t]
\centering
\begin{tabular}{llll}
\toprule
\textbf{Method} & \textbf{Assumptions} & \textbf{Complexity} \\ \hline
GD \cite{guminov}     &      $f\in\mathcal{F}_L$        &      $\widetilde{O}\left(\frac{nLR^2}{\gamma\epsilon}\right)$        \\ \hline
SGD \cite{gower2021}    &   \makecell[l]{$f\in\mathcal{F}_L$ \& \hyperlink{ERcondition}{ER Condition}\\ \\$f_i\in\mathcal{F}_L$ \& \hyperlink{interpolation}{Interpolation}}          &      \makecell[l]{$\widetilde{O}\left(\frac{(R^2+\gamma^2\lambda^2)^2}{\gamma^2\epsilon^2}\right)$\\$\widetilde{O}\left(\frac{LR^2}{\gamma^2\epsilon}\right)$}        \\ \hline
SGD \cite{jin2020convergence}    &    $f\in\mathcal{F}_L$ \& \hyperlink{boundedvar}{Bounded Variance}    &    $\widetilde{O}\left(\frac{LR^2}{\gamma\epsilon}+\frac{\bar{\sigma}^2R^2}{\gamma^2\epsilon^2}\right)$        \\ \hline
QAGD \cite{hinder}   &   $f\in\mathcal{F}_L$          &           $\widetilde{O}\left(\frac{nR}{\gamma}\sqrt{\frac{L}{\epsilon}}\right)$       \\ \hline
QASGD (Ours)  &   $f_i\in\mathcal{F}_L$ \& \hyperlink{boundedgrad}{Bounded Gradient}          &        $\widetilde{O}\left(R\sqrt{\frac{L}{\epsilon}}+ \frac{\sigma^2R^2}{\gamma^2\epsilon^2}\right)$         \\ \hline
QASVRG (Ours) &    $f_i\in\mathcal{F}_{L}$ \& \hyperlink{interpolation}{Interpolation} \& \hyperlink{compactset}{Compactness}         &            $\widetilde{O}\left(n+\min\left\{\frac{LR^2}{\gamma\epsilon},\frac{nR}{\gamma}\sqrt{\frac{L}{\epsilon}}\right\}\right)$     \\ \bottomrule
\end{tabular}
\caption{Comparison between some existing methods and our methods when $f\in\mathcal{Q}_{0\gamma}$}
\label{summary}
\end{table*}
\subsection{Prior Deterministic Methods}
Several deterministic first-order methods have been developed to minimize $L$-smooth $\gamma$-quasar-convex functions.

\citet{guminov} prove that gradient descent achieves a convergence rate of $O(L/\gamma t)$. They also propose an accelerated algorithm achieving a convergence rate of $O(L/\gamma^2t^2)$. This algorithm, however, depends on a low-dimensional subspace optimization method at each iteration, which is possibly prohibitively expensive to perform.

\citet{hinder} propose a novel accelerated  gradient method achieving a convergence rate of $O((1-\gamma/\sqrt{2\kappa})^t)$ for strongly quasar-convex functions. Notably, when $\gamma=1$, this rate matches the convergence rate achieved by Nesterov's AGD for strongly convex functions. The method introduced by \citet{hinder} also achieves a convergence rate of $O(L/\gamma^2t^{2}+\epsilon/2)$ for general quasar-convex functions, which nearly matches the convergence rate achieved by Nesterov's AGD for convex functions when $\gamma=1$. An additional factor $\epsilon$ (which can be made arbitrarily small) appears in the convergence rate due to a binary line search subroutine introduced in order to search for the appropriate momentum parameters. Notably,  the momentum parameters classically chosen in accelerated gradient descent (\citealt{Nesterov1983}) are not guaranteed to perform well under quasar-convexity. 
Compared with the low-dimensional subspace method in \citet{guminov}, the binary line search in ~\citet{hinder}'s AGD achieves at most $O(\log(\gamma^{-1}\epsilon^{-1}))$ function and gradient evaluations, which can be considerably cheaper. Analogously, \citet{bu2020note} propose momentum-based accelerated algorithms relying on a subroutine but without complexity analysis of the subroutine. \citet{hinder} also establishes a worst case complexity lower bound of $\Omega(\gamma^{-1}\epsilon^{-1/2})$ for any deterministic first-order methods applied to quasar-convex functions, and their methods are optimal up to a logarithmic factor. The complexity of methods in \citet{guminov} conditionally matches this lower bound.
\subsection{Prior Stochastic Methods}
While deterministic accelerated methods for quasar-convex functions achieve fast convergence rates and near-optimal complexity, we focus on the development of stochastic methods to reduce the computational complexity when solving \eqref{problem}.
When the objective $f$ is $\gamma$-quasar-convex,
\citealt{hardt} show that SGD achieves a convergence rate of $O(\Gamma/\gamma^2t+\bar{\sigma}/\gamma\sqrt{t})$ under the assumptions of $\bar{\sigma}^2$-bounded variance and $\Gamma$-weak-smoothness.
\begin{tcolorbox}[frame empty, left = 1mm, right = 1mm, top = 1mm, bottom = 1mm]
\begin{assumption}[Bounded Variance]
\label{boundedvar}
\hypertarget{boundedvar}{
    Suppose $i$ is sampled i.i.d from $[n]$. For some constant $\bar{\sigma}$, we have
    \begin{equation*}
        \mathbb{E}_i\left[\|\nabla f_i(x)-\nabla f(x)\|^2\right]\leq \bar{\sigma}^2.
    \end{equation*}}
\end{assumption}
\end{tcolorbox}
The weak-smoothness assumption is milder than $L$-smoothness. \citet{gower2021} propose a stochastic gradient method achieving a convergence rate of $O(\lambda^2/\gamma\sqrt{t})$ under $L$-smoothness and Assumption \ref{ERcondition}.
\begin{tcolorbox}[frame empty, left = 1mm, right = 1mm, top = 1mm, bottom = 1mm]
\begin{assumption}[ER Condition]
\label{ERcondition}
\hypertarget{ERcondition}{
    Suppose $i$ is sampled i.i.d from $[n]$. For some constants $\rho$ and $\lambda$, we have
    \begin{equation*}
        \mathbb{E}_i\left[\|\nabla f_i(x)\|^2\right]\leq 4\rho(f(x)-f(x^*))+\|\nabla f(x)\|^2+2\lambda^2.
    \end{equation*}}
\end{assumption}
\end{tcolorbox}
Compared with \citet{hardt}, the smoothness assumption in \citet{gower2021} is stronger, but the assumption on the gradient estimate is weaker in a sense. Moreover, \citet{gower2021} demonstrate that this rate can be improved to $O(L/\gamma^2t)$ under Assumption~\ref{interpolation}.
Under smoothness and 
bounded variance, \citet{jin2020convergence} provides a sharper analysis of SGD compared with \citet{gower2021} and extends the analysis to the non-smooth setting.

There are several accelerated stochastic methods that can theoretically achieve better worst-case convergence rates than SGD when the objective is convex. In the convex setting, the objective function usually includes a regularizer term, which is convex lower semi-continuous and not necessarily smooth. \citet{ghadimi2016} and \citet{kulunchakov2020} propose proximal ASGD which  achieves convergence rates of $O(L/t^2+\sigma/\sqrt{t})$ and $O((1-1/\sqrt{\kappa})^t+\sigma^2/t)$ for general convex and strongly convex functions respectively under $L$-smoothness and the $\sigma^2$-bounded variance assumption. Variance reduction is a powerful technique to achieve a better convergence rate. \citet{allen2017katyusha} and \citet{kulunchakov2020} propose accelerated proximal SVRG with a convergence rate guarantee of $O((1-\min\{1/\sqrt{3\kappa n}, 1/\sqrt{2n}\})^t)$ and $O(Ln/t^2)$ for $L$-smooth (strongly) convex functions. \citet{nitanda2016} proposes accelerated mini-batch SVRG methods for minimizing (strongly) convex finite sum without regularizer. This algorithm is a multi-stage scheme achieving convergence rates of $\widetilde{O}(n+\min\{\kappa,n\sqrt{\kappa}\})$ and $\widetilde{O}(n+\min\{L/\epsilon,n\sqrt{L/\epsilon}\})$ for $L$-smooth (strongly) convex functions. By contrast, the methods they use to update the fixed anchor point of SVRG and control the variance are different.

\subsection{Motivation}
Inspired by the accelerated stochastic methods discussed above, we extend ASGD of \citet{kulunchakov2020} and AMSVRG of \citet{nitanda2016} to the (strongly) quasar-convex setting under different assumptions. In this subsection we will discuss these assumptions and how they are compared to prior sets of assumptions. Different from \citet{kulunchakov2020}, we do not consider random perturbations of the function value and gradient given $x^*$ may not be the global minimizer after perturbation. Furthermore, binary line search \cite{hinder} is incorporated into each of our proposed methods for finding the appropriate momentum parameters. 

For QASGD, our key assumption is the bounded gradient assumption, which is a frequently used assumption in the standard convergence analysis of SGD in the non-convex setting \cite{hazan2014beyond, rakhlin2011making, recht2011hogwild, nemirovski2009robust}. Due to the special structure of strongly quasar-convex functions whose gradient is not bounded, we generalize this assumption as follows.
\begin{tcolorbox}[frame empty, left = 1mm, right = 1mm, top = 1mm, bottom = 1mm]
\begin{assumption}[Bounded Gradient]
\label{boundedgrad}
\hypertarget{boundedgrad}{
    Suppose $f\in\mathcal{Q}_{\mu\gamma}$ and $i$ is sampled i.i.d from $[n]$. For some $\sigma\geq 0$ and $x^*\in\mathcal{X}^*$, we have
    \begin{equation*}
        \mathbb{E}_i\left[\|\nabla f_i(x)\|^2\right] \leq \sigma^2 + 2\mu^2\|x^*-x\|^2.
    \end{equation*}}
\end{assumption}
\end{tcolorbox}
 This assumption will reduce to the standard bounded gradient assumption under general quasar-convexity. Compared with ER condition, the bounded gradient assumption is stronger. Example \eqref{GLM} satisfies this assumption ($\mu=0$) if we choose the link functions to be logistic. For $\mu>0$, we consider a quasar-convex finite sum $f=\sum_{i=1}^nf_i$ where $x^*$ is the minimizer of $f$ and $\mathbb{E}_i[\|\nabla f_i\|^2]\leq\sigma^2$. Then $g(x)=f(x)+\frac{\mu}{2}\|x-x^*\|^2$ is strongly quasar-convex and satisfies this assumption. While some quasar-convex functions intrinsically do not satisfy Assumption \ref{boundedgrad}, it will hold in practice under Assumption \ref{compactset}, which was also proposed in \citet{https://doi.org/10.1002/asmb.538}, \citet{gurbuzbalaban2015globally} and \citet{nitanda2016} to analyze the incremental and stochastic methods. We summarize the relation of four assumptions above in Remark \ref{assumprelation}.
\begin{tcolorbox}[frame empty, left = 1mm, right = 1mm, top = 1mm, bottom = 1mm]
\begin{assumption}[Compactness]
\label{compactset}
\hypertarget{compactset}{
There exists a compact set $\mathcal{C}\subseteq\mathbb{R}^d$ containing iterates generated by some optimization algorithm.}
\end{assumption}
\end{tcolorbox}
\begin{tcolorbox}[frame empty, left = 1mm, right = 1mm, top = 1mm, bottom = 1mm]
\begin{remark}
\label{assumprelation}
The relation of Assumption \ref{ERcondition} (ER), Assumption \ref{boundedvar} (BV), Assumption \ref{boundedgrad} (BG) and Assumption \ref{compactset} (Compactness) is illustrated as follows.

\begin{tikzpicture}[node distance=20pt]
  \node[draw]                        (Compactness)   {\hyperlink{compactset}{Compactness}};
  \node[draw, right=of Compactness]                         (Bounded gradient)  {\hyperlink{boundedgrad}{BG}};
  \node[draw, right=of Bounded gradient]                         (Bounded variance)  {\hyperlink{boundedvar}{BV}};
  \node[draw, right=of Bounded variance]                        (ER condition)  {\hyperlink{ERcondition}{ER}};
  
  \draw[->] (Compactness)  -- (Bounded gradient);
  \draw[->] (Bounded gradient) -- (Bounded variance);
  \draw[->] (Bounded variance) -- (ER condition);
\end{tikzpicture}
\end{remark}
\end{tcolorbox}
For QASVRG, we will prove in the next section that the upper bound of the gradient variance introduced by \citet{nitanda2016} also upper bounds the gradient variance of quasar-convex functions (Proposition \ref{varianceupperbound}) provided that each $f_i$ in problem \eqref{problem} is $L_i$-smooth and satisfies the following interpolation assumption. 
\begin{tcolorbox}[frame empty, left = 1mm, right = 1mm, top = 1mm, bottom = 1mm]
\begin{assumption}[Interpolation]
\label{interpolation}
\hypertarget{interpolation}{
    There exists $x^*\in\mathcal{X}^*$ such that for all $i\in[n]$
    \begin{equation*}
        \min_{x\in\mathbb{R}^d}f_i(x)=f_i(x^*).
    \end{equation*}}
\end{assumption}
\end{tcolorbox}
The interpolation assumption is commonly observed in the over-parameterized machine learning models and has  attracted much attention recently \cite{zhou2019sgd, ma2018power, vaswani2019fast, gower2021}. If the model is sufficiently over-parameterized, it can interpolate the labelled training data completely. Particularly, example \eqref{learningLDS} satisfies the interpolation assumption when $\xi_t=0$, as $\Theta$ is also the global minimizer of the objective function generated by each training example. Example \eqref{GLM} also satisfies this assumption given that $y=\sigma(w_*^{\mathsf{T}}x)$ for each $x\sim \mathcal{D}$. Let $L=\max_i\{L_i\}$; we assume throughout that  each $f_i$ is $L$-smooth for brevity. Moreover, \citet{nitanda2016} only presents one parameter choice for both convex and strongly convex functions. In this paper, we provide two parameter choices (Option \rom{1} and \rom{2}) under strong quasar-convexity. Similarly, Option \rom{2} in Table \ref{parameterofQASVRG} is identical to the parameter choice of general quasar-convex functions. Option \rom{1} is a slightly different method from the direct extension of \citet{nitanda2016}'s AMSVRG. More technical comparison 
between these two parameter choices are provided in subsection \ref{convergenceanalysis}.
\section{Algorithms}
\begin{algorithm*}[ht]
\caption{$\left(A_k, B_k, \tilde{y}_0, t, \epsilon\right)$}
\label{algorithm_for_quasar}
\begin{algorithmic}[1]
\REQUIRE{$h$ satisfies $D_h(x,y)\geq\frac{\Bar{\mu}}{2}\|x-y\|^2$; $\tilde{f}\in\mathcal{F}_L$; $f\in\mathcal{Q}_{\mu\gamma}$.}
\STATE{Initialize $x_0=z_0=y_0=\tilde{y}_0$ and specify $\theta_k\triangleq(\nabla_k, \alpha_k, \beta_k, \rho_k, \tilde{f}, b, c, \tilde{\epsilon})$}
\FOR{$k=0,...,t-1$}
\STATE{$\tau_k\gets \text{Bisearch} (\tilde{f},y_k,z_k,b,c,\tilde{\epsilon})$} \quad \quad  ({\em search $\tau_k \in [0,1]$ satisfying $\tau_k\langle\nabla \tilde{f}(d_k),y_k-z_k\rangle-b\|d_k-z_k\|^2\leq c(\tilde{f}(y_k)-$ \\
\qquad\qquad\qquad\qquad\qquad\qquad\qquad\quad$ \tilde{f}(d_k))+\tilde{\epsilon}$ with $d_k:=\tau_k y_k+(1-\tau_k)z_k$; see Algorithm \ref{Bisearch} for more details})
\STATE{$x_{k+1}\gets(1-\tau_k)z_k+\tau_ky_k$} 
\qquad\qquad\qquad\qquad\qquad\qquad\qquad\quad\qquad\qquad\qquad\qquad\qquad\qquad\quad({\em coupling step})
\STATE{$z_{k+1}\gets
    \arg\underset{z\in\mathbb{R}^d}{\min}\left\{\langle\nabla_{k},z-z_k\rangle+\beta_k D_h(z,z_k)+\alpha_k D_h(z,x_{k+1})\right\}$}
\qquad\qquad\qquad\qquad\qquad
\,{(\em mirror descent step)}
\STATE{$y_{k+1}\gets \arg\underset{y\in\mathbb{R}^d}{\min}\left\{\rho_k\langle\nabla_{k},y-x_{k+1}\rangle+\frac{1}{2}\|y-x_{k+1}\|^2\right\}$}\qquad\qquad\qquad\qquad\qquad\qquad\qquad
{(\em gradient descent step)}
\ENDFOR
\OUTPUT $y_t$
\end{algorithmic}
\end{algorithm*}
\begin{algorithm*}[ht]
\caption{$\left(A_k, B_k, b_k, \tilde{y}_0, p, q, \epsilon\right)$}
\label{Multistagesvrg}
\begin{algorithmic}[1]
\REQUIRE{$D_h(x,y)\geq\frac{\Bar{\mu}}{2}\|x-y\|^2$; $f_i\in\mathcal{F}_L$; $f\in\mathcal{Q}_{\mu\gamma}$; $q\leq\frac{1}{4}$; $p\leq\frac{\gamma\bar{\mu}}{16}$}
\STATE{Initialize $y_0=\tilde{y}_0$}
\FOR{s=0,1,...}
\STATE{$
    t\gets\begin{cases}
    \sqrt{\frac{17LD_h(x^*,y_s)}{\gamma^2\bar{\mu}qf(y_s)}},\ \mu=0\ \text{or}\ \mu>0\ (\text{Opt \rom{2}})\\
    \log_{1+\frac{\gamma}{\sqrt{8\kappa}}}\left(\frac{2}{\gamma q}\right),\quad\mu>0\ (\text{Opt \rom{1}})
    \end{cases}\nonumber
$}
\STATE{$y_s\gets\text{Algorithm \ref{algorithm_for_quasar}}\left(A_k,B_k,y_0,\lceil t \rceil,\epsilon\right)$} \qquad\qquad ({\em specify $\nabla_k=\nabla f_{I_k}(x_{k+1})-\nabla f_{I_k}(y_0)+\nabla f(y_0)$ where $|I_k|=b_k$})
\STATE{$y_0\gets y_s$}
\ENDFOR
\OUTPUT $y_s$
\end{algorithmic}
\end{algorithm*}
In order to solve \eqref{problem}, QAGD, QASGD and QASVRG need to access the gradient or the gradient estimate from the oracle, which we denote $\nabla_k$. In this paper, we consider the following gradient (estimates):
\begin{itemize}
    \item \textbf{Full Gradient}: $\nabla_k = \nabla f(x_{k+1})$. Problem \eqref{problem} becomes deterministic.
    \item \textbf{Stochastic Gradient}: $\nabla_k=\nabla f_i(x_{k+1})$ with the index $i$ sampled i.i.d from $[n]$. We have $\mathbb{E}_i[\nabla_k]=\nabla f(x_{k+1}),$ where $\mathbb{E}_i$ denotes the expectation with respect to the index $i$.
    \item \textbf{Mini-batch SVRG}: $\nabla_k=\nabla f_{I_k}(x_{k+1})-\nabla f_{I_k}(\tilde{x})+\nabla f(\tilde{x}),$ where $\tilde{x}$ is the anchor point fixed per stage; $I_k=\{i_1,i_2,...,i_{b_k}\}$ is sampled i.i.d from $[n]$ with $f_{I_k}\triangleq\frac{1}{b_k}\sum_{j=1}^{b_k} f_{i_j}$. Batchsize $|I_k|=b_k$. We have $\mathbb{E}_{I_k}[\nabla_k]=\nabla f(x_{k+1}),$ where $\mathbb{E}_{I_k}$ denotes the expectation with respect to the mini-batch $I_k$.
\end{itemize}
\subsection{Quasar-accelerated Algorithms}
We introduce Algorithm \ref{algorithm_for_quasar} as a general framework incorporating QAGD, QASGD and a single stage of QASVRG with different parameter choices. Based on Algorithm \ref{algorithm_for_quasar}, we also introduce the multi-stage QASVRG as described in Algorithm \ref{Multistagesvrg}. Notably, we provide more information about Bisearch (line 3 of Algorithm \ref{algorithm_for_quasar}) in the Appendix including the whole algorithm and the corresponding complexity analysis obtained from \citet{hinder} (Algorithm \ref{Bisearch}, Lemma \ref{complexityofBi}). The guaranteed performance of our methods relies on the internal assumption \ref{Bregmanassump}. 
 Relation \eqref{st_h} requires $h$ is $\bar{\mu}$-strongly convex; relation \eqref{uniform-quasar-convex-of-f} is a generalization of $\mu$-strongly $\gamma$-quasar-convexity using Bregman divergence as the distance, which we will substitute \eqref{strongly quasar-convex} with in the following analysis. When $h=\frac{1}{2}\|\cdot\|^2$, this relation will be identical to \eqref{strongly quasar-convex}. Relation \eqref{func_value_distance} holds in the Euclidean setting given that $f$ is $\mu$-strongly $\gamma$-quasar-convex with respect to $x^*$ (\citet{hinder}, Corollary 1).
 \begin{tcolorbox}[frame empty, left = 1mm, right = 1mm, top = 1mm, bottom = 1mm]
\begin{assumption}
\label{Bregmanassump}
Suppose $f_i$ is differentiable for each $i\in[n]$. For some $\bar{\mu}>0$, $0<\gamma\leq 1$ and $\mu\geq 0$, and for all $x,y\in\mathbb{R}^d$, require
\begin{align}
\label{st_h}
    &D_h(x,y)\geq\frac{\bar{\mu}}{2}\|x-y\|^2,\\
    \label{uniform-quasar-convex-of-f}
    &f(x^*)\geq f(x) \hspace{-2pt}+\hspace{-2pt} \frac{1}{\gamma}\langle\nabla f(x),x^*\hspace{-2pt}-\hspace{-2pt} x\rangle\hspace{-2pt}+\hspace{-2pt}\mu D_h(x^*,x),\\
    \label{func_value_distance}
    &f(x)\geq f(x^*)+\frac{\gamma\mu}{2-\gamma}D_h(x^*,x).
\end{align}
\end{assumption}
\end{tcolorbox}
\subsection{Convergence Analysis}
\label{convergenceanalysis}
We develop a unified analysis of QAGD, QASGD and QASVRG using the following Lyapunov function:
\begin{equation}
\label{lyap}
    E_k\triangleq A_k(f(y_k)-f(x^*))+B_k D_h(x^*,z_k),
\end{equation}
where $A_k$ and $B_k$ are positive non-decreasing sequences that the parameter choices shown in Table \ref{parameterofQAGD}, Table \ref{parameterofQASGD} and Table \ref{parameterofQASVRG} are highly related to. 
Based on the convergence rates derived by Lyapunov analysis, we deduce the complexity upper bound of each method in the Euclidean setting ($h=\frac{1}{2}\|\cdot\|^2$). Since the convergence results of QAGD have already been established in \citet{hinder}, we will not provide the convergence analysis of QAGD in this subsection. Instead, we make convergence analysis of QAGD in Lyapunov framework and obtain results matching \citet{hinder}. Relevant proofs and parameter choices are provided in Appendix~\ref{proofofconvergencerate} and \ref{proofofcomplexity}. 
\begin{theorem}[QASGD]
\label{convergencerateASGD}
    Suppose Assumption \ref{Bregmanassump} and Assumption \ref{boundedgrad} hold, $D_h(x^*,z_0)\leq R^2$, $f_i\in\mathcal{F}_L$ for all $i\in[n]$ and choose any $\Tilde{y}_0\in\mathbb{R}^d$. Then Algorithm \ref{algorithm_for_quasar} with the choices of $\nabla_k=\nabla f_i(x_{k+1})$ and $A_k, B_k, \theta_k$ specified in Table \ref{parameterofQASGD} satisfies 
\begin{equation*}
    \mathbb{E}\left[\mathcal{E}_t\right]\simeq\left\{\begin{aligned}
    &\frac{LR^2}{t^2}+\frac{\sigma R}{\gamma\sqrt{t}}+\frac{\epsilon}{2}, &\mu=0,\\
    &\left(1+\min\left\{\frac{\gamma^2\bar{\mu}^2}{16},\frac{1}{2}\right\}\right)^{-t}E_0+\frac{\sigma^2}{\gamma^2 t}, &\mu>0.
\end{aligned}
\right.
\end{equation*}
\end{theorem}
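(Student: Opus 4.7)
My target is a Lyapunov recursion on $E_k = A_k \mathcal{E}_k + B_k D_h(x^*, z_k)$ from~\eqref{lyap} of the form
\begin{equation*}
\mathbb{E}[E_{k+1} - E_k \mid \mathcal{H}_k] \le \begin{cases} \dfrac{\sigma^2}{2\bar{\mu}\beta_k^2} + \dfrac{\bar{a}_k\epsilon}{2}, & \mu=0,\\[4pt] \dfrac{\sigma^2 B_k}{\bar{\mu}\beta_k^2}, & \mu>0, \end{cases}
\end{equation*}
where $\mathcal{H}_k$ collects all randomness through iteration $k$. Telescoping over $k=0,\dots,t-1$ and dividing by $A_t$ converts this into a bound on $\mathbb{E}[\mathcal{E}_t]$. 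For $\mu=0$, the Table~\ref{parameterofQASGD} choices $A_k=\eta(k+1)^2$ and $\beta_k=\gamma/\bar a_k$ give $\sum_k 1/\beta_k^2 \simeq \eta^2 t^3/\gamma^2$ and $\sum_k \bar a_k \simeq \eta t^2$, so dividing by $A_t\simeq\eta t^2$ yields $R^2/(\eta t^2) + \eta\sigma^2 t/(\bar\mu\gamma^2) + \epsilon/2$; the prescribed two-sided $\eta$ then balances the decay and noise terms into $LR^2/t^2 + \sigma R/(\gamma\sqrt{t}) + \epsilon/2$. For $\mu>0$, a geometric $A_k$ together with the polynomial restart (also in Table~\ref{parameterofQASGD}) produces the exponential contraction of $E_0$ plus the $\sigma^2/(\gamma^2 t)$ noise floor.

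\textbf{One-step analysis.} Since $\rho_k=0$ we have $y_{k+1}=x_{k+1}$, and I decompose
$$E_{k+1}-E_k = \bar a_k\mathcal{E}_{k+1} + A_k\bigl(f(x_{k+1})-f(y_k)\bigr) + \bar b_k D_h(x^*,z_{k+1}) + B_k\bigl[D_h(x^*,z_{k+1}) - D_h(x^*,z_k)\bigr].$$
The variational inequality of the mirror-descent step plus the three-point Bregman identity bounds the last bracket by $(B_k/\beta_k)\langle\nabla_k, x^*-z_{k+1}\rangle - B_k D_h(z_{k+1}, z_k)$, plus, only for $\mu>0$, the $\alpha_k$-induced term $\bar b_k[D_h(x^*, x_{k+1}) - D_h(x^*, z_{k+1}) - D_h(z_{k+1}, x_{k+1})]$, where the coefficient $\bar b_k = \alpha_k B_k/\beta_k$ is a parameter identity from Table~\ref{parameterofQASGD}. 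Splitting $\langle\nabla_k, x^*-z_{k+1}\rangle$ via the coupling $x_{k+1}=(1-\tau_k)z_k+\tau_k y_k$ into three inner products, I bound (i) $\langle\nabla_k, x^*-x_{k+1}\rangle$ by (strong) quasar-convexity to cancel $\bar a_k\mathcal{E}_{k+1}$ via $\gamma B_k/\beta_k=\bar a_k$ (and for $\mu>0$, to absorb $\bar b_k D_h(x^*, z_{k+1})$ while reserving a negative $-\bar b_k D_h(x^*, x_{k+1})$); (ii) $\langle\nabla_k, x_{k+1}-z_k\rangle=\tau_k\langle\nabla_k, y_k-z_k\rangle$ by the bisearch inequality of line~3, which after rescaling by $B_k/\beta_k$ cancels $A_k(f(x_{k+1})-f(y_k))$ in expectation and leaves the $\bar a_k\epsilon/2$ slack; and (iii) $\langle\nabla_k, z_k-z_{k+1}\rangle$ by Young's inequality, whose quadratic term is absorbed into $B_k D_h(z_{k+1}, z_k)\ge (B_k\bar\mu/2)\|z_{k+1}-z_k\|^2$, leaving the residual $B_k\|\nabla_k\|^2/(2\bar\mu\beta_k^2)$. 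Assumption~\ref{boundedgrad} bounds this residual in expectation by $B_k(\sigma^2 + 2\mu^2\|x^*-x_{k+1}\|^2)/(2\bar\mu\beta_k^2)$; for $\mu>0$ the $\mu^2$-piece is folded into the reserved $-\bar b_k D_h(x^*, x_{k+1})$ via $\|\cdot\|^2\le 2D_h/\bar\mu$, which requires $\bar a_k/A_k\le \bar\mu^2\gamma^2/16$ and so forces the step-size cap in Table~\ref{parameterofQASGD}.

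\textbf{Main obstacle.} The delicate point is the stochastic coupling between the bisearch parameter $\tau_k$ and the sample $i_k$: because line~3 runs the bisearch on $\tilde f=f_i$, the quantity $\tau_k$ is a function of $\nabla f_{i_k}$, so $x_{k+1}$ and $i_k$ are correlated and neither $\mathbb{E}[\nabla f_{i_k}(x_{k+1})\mid\mathcal{H}_k]=\nabla f(x_{k+1})$ nor $\mathbb{E}[f_{i_k}(y_k)-f_{i_k}(x_{k+1})\mid\mathcal{H}_k]=f(y_k)-f(x_{k+1})$ holds a priori, yet both unbiasedness statements are needed in the step-(i) and step-(ii) cancellations. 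The hard part will be resolving this; my approach is to draw an independent fresh sample for the bisearch so that $\tau_k\perp i_k$, after which the expectation bookkeeping above goes through verbatim. Matching the Table~\ref{parameterofQASGD} identities $\gamma B_k/\beta_k=\bar a_k$ and $\alpha_k B_k/\beta_k=\bar b_k$ and checking the $\mu^2$-absorption constant are then routine algebra.
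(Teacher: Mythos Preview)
Your Lyapunov scheme---expanding $E_{k+1}-E_k$ via the three-point identity and mirror-descent optimality, splitting $\langle\nabla_k,x^*-z_{k+1}\rangle$ through the coupling point $x_{k+1}$, invoking the bisearch certificate for the $\tau_k\langle\nabla_k,y_k-z_k\rangle$ piece, and leaving a $\|\nabla_k\|^2$ residual to be controlled by Assumption~\ref{boundedgrad}---is exactly the paper's argument, and your parameter bookkeeping (including the $\bar a_k/A_k\le\gamma^2\bar\mu^2/16$ constraint that forces the geometric-rate cap for $\mu>0$, and the two-phase telescoping) matches the paper's computations. You are also right to flag the $\tau_k$--$i_k$ coupling: because Table~\ref{parameterofQASGD} sets $\tilde f=f_i$, the point $x_{k+1}$ is a function of the sampled index, so $\mathbb{E}_i[\nabla f_i(x_{k+1})]=\nabla f(x_{k+1})$ and $\mathbb{E}_i[f_i(y_k)-f_i(x_{k+1})]=f(y_k)-f(x_{k+1})$ are not automatic. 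The paper simply writes these identities after ``taking the expectation'' without comment, so on this point you have been more careful than the paper itself.

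However, your proposed fix---running the bisearch on an independent sample $j$ so that $\tau_k\perp i_k$---does not close the gap. With $\tau_k$ determined by $j$, step~(i) becomes legitimate (now $x_{k+1}\perp i$, hence $\mathbb{E}_i[\nabla f_i(x_{k+1})]=\nabla f(x_{k+1})$), but step~(ii) breaks: the certificate now reads $\tau_k\langle\nabla f_j(x_{k+1}),y_k-z_k\rangle\le c\bigl(f_j(y_k)-f_j(x_{k+1})\bigr)+\tilde\epsilon$, which bounds the wrong inner product. The term you must control is $\tau_k\langle\nabla_k,y_k-z_k\rangle$ with $\nabla_k=\nabla f_i(x_{k+1})$; after $\mathbb{E}_i$ this is $\tau_k\langle\nabla f(x_{k+1}),y_k-z_k\rangle$, for which you hold no certificate, and averaging the $f_j$-certificate over $j$ does not recover one because $x_{k+1}$ still depends on $j$. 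Consequently the cancellation against $A_k(f(x_{k+1})-f(y_k))$ does not go through ``verbatim.'' This measurability issue is left unresolved in both your proposal and the paper's write-up.
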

\begin{corollary}
\label{complexASGD}
Consider QASGD under the same assumption in Theorem \ref{convergencerateASGD}. Then  the overall complexity of QASGD to achieve $\mathbb{E}\left[f(y_t)-f(x^*)\right]\leq\epsilon$ is upper bounded by
\begin{equation}
    \begin{aligned}
    &O\left(R\sqrt{\frac{L}{\epsilon}}\log^+\left(\frac{LR^2}{\gamma\epsilon}\right)+ \frac{\sigma^2R^2}{\gamma^2\epsilon^2}\log^+\left(\frac{LR^2}{\gamma\epsilon}\right)\right),\\
    &O\left(\frac{1}{\gamma^2}\log^+\left(\frac{\kappa^{3/4}}{\gamma}\right)\log\left(\frac{\mathcal{E}_0}{\gamma\epsilon}\right) + \frac{\sigma^2}{\gamma^2\epsilon}\log^+\left(\frac{\kappa^{2/3}}{\gamma\epsilon^{1/6}}\right)\right)
    \end{aligned}\nonumber
\end{equation}
for $\mu=0$ and $\mu>0$ respectively.
\end{corollary}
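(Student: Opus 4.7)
The plan is to invert the convergence-rate bounds of Theorem~\ref{convergencerateASGD} to obtain the iteration count needed to drive $\mathbb{E}[\mathcal{E}_t] \leq \epsilon$, then multiply by the per-iteration cost. Each iteration of Algorithm~\ref{algorithm_for_quasar} performs one stochastic-gradient evaluation plus a call to the Bisearch subroutine; by the appendix Lemma~\ref{complexityofBi} (imported from \citet{hinder}), each Bisearch contributes $O(\log^+(1/(\gamma\epsilon)))$ function and gradient evaluations. So the overall complexity is $(\text{iterations})\cdot(1 + O(\log^+(1/(\gamma\epsilon))))$.

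For $\mu = 0$, I would set the Bisearch tolerance equal to $\epsilon$ so that the $\epsilon/2$ additive term contributes at most $\epsilon/2$, and then force each of $LR^2/t^2$ and $\sigma R/(\gamma\sqrt{t})$ to be $O(\epsilon)$. These give $t \gtrsim R\sqrt{L/\epsilon}$ and $t \gtrsim \sigma^2 R^2 / (\gamma^2 \epsilon^2)$ respectively; summing the two and multiplying by the Bisearch factor yields the first claimed bound. This part is purely routine inversion.

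For $\mu > 0$, the rate has two regimes: a geometric term $(1+q)^{-t} E_0$ with $q = \min\{\gamma^2\bar\mu^2/16,1/2\}$ and a noise floor $\sigma^2/(\gamma^2 t)$. Because the noise term only decays as $1/t$, no single parameter schedule attains the combined bound, so I would follow the two-stage strategy encoded in Table~\ref{parameterofQASGD} (Step~1 then Step~2). In Step~1 (geometric schedule), run until the geometric term falls below some intermediate threshold $\epsilon_1$; the count is $O(q^{-1}\log(E_0/\epsilon_1)) = O(\gamma^{-2}\log(E_0/\epsilon_1))$. Then restart with the polynomial $A_k \propto (k+c)^2$ schedule of Step~2 to handle the noise-dominated regime, driving $\sigma^2/(\gamma^2 t) \leq \epsilon$ in $O(\sigma^2/(\gamma^2\epsilon))$ additional iterations. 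Multiplying each phase's iteration count by the per-iteration cost produces the two summands in the $\mu>0$ bound.

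The main obstacle is the $\log^+(\kappa^{3/4}/\gamma)$ factor inside the geometric-phase bound: its source is the initial Lyapunov value $E_0 = A_0(f(y_0)-f(x^*)) + B_0 D_h(x^*,z_0)$, which when expressed back in terms of problem parameters via~\eqref{func_value_distance} of Assumption~\ref{Bregmanassump} scales polynomially in $\kappa$ and $R$. Tracking this dependence carefully — in particular how the restart offset $\max\{48/(\bar\mu^2\gamma^2),5\}$ in Step~2 enters the squared-$A_k$ schedule and how the phase-switching threshold $\epsilon_1$ must be chosen so neither phase dominates — is where the bookkeeping is delicate; the $\log^+$ and $\kappa^{3/4}$ artifacts emerge from that calculation rather than from any new analytic ingredient.
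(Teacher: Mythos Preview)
Your iteration counts are correct in both regimes, and the two-phase Step~1/Step~2 strategy for $\mu>0$ matches the paper. The gap is that you treat the Bisearch cost as a fixed $O(\log^+(1/(\gamma\epsilon)))$ per call and then try to locate the $\kappa$-dependent logs elsewhere; both are wrong.

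Lemma~\ref{complexityofBi} bounds each Bisearch call by
\[
O\!\left(\log^+\!\left((4+c)\min\Bigl\{\tfrac{L^3}{b^3},\ \tfrac{L\|y_k-z_k\|^2}{\tilde\epsilon}\Bigr\}\right)\right),
\]
and the argument depends on which of $b$ or $\tilde\epsilon$ is nonzero. For $\mu>0$, Table~\ref{parameterofQASGD} sets $\tilde\epsilon=0$ and $b=\gamma\bar\mu\mu/4$, so only the $L^3/b^3$ branch is available. In Step~1, $c=\gamma A_k/(2\bar a_k)=O(1/\gamma)$ and $(1+c)L^3/b^3=O(\kappa^3/\gamma^4)$, whose $\log^+$ is exactly the $O(\log^+(\kappa^{3/4}/\gamma))$ factor in the statement; the accompanying $\log(1/(\gamma\epsilon))$ is just the iteration count from inverting the geometric rate, as you said. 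In Step~2, $c$ grows linearly in $k$ up to the final index $k=O(\sigma^2/(\gamma^2\mu\epsilon))$, so $c+1=O(1/(\gamma^3\epsilon))$ and $(1+c)L^3/b^3=O(\kappa^3/(\gamma^6\epsilon))$, giving the $\log^+(\kappa^{1/2}/(\gamma\epsilon^{1/6}))$ factor. None of this comes from $E_0$, relation~\eqref{func_value_distance}, or the restart offset $\max\{48/(\bar\mu^2\gamma^2),5\}$; your proposed source would put $\kappa$ inside the iteration-count logarithm rather than produce a separate $\log^+$ multiplier, and the resulting expression would not match the corollary.

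For $\mu=0$ the conclusion you quote happens to be right, but not for free: here $b=0$ and $\tilde\epsilon=\gamma\epsilon/2$, so one must control $L\|y_k-z_k\|^2/\tilde\epsilon$. Since $\rho_k=0$ gives $y_k=x_k$, the paper bounds $\|x^*-z_k\|$ from the Lyapunov inequality, then $\|z_k-z_{k-1}\|$, and then telescopes $\|x_k-z_k\|\le\|x_{k-1}-z_{k-1}\|+\|z_k-z_{k-1}\|$ over all $k$ up to the terminal iteration. The result is polynomial in $1/(\gamma\epsilon)$, so its $\log^+$ is indeed $O(\log^+(1/(\gamma\epsilon)))$, but this recursion is the actual work in the $\mu=0$ case and your sketch omits it.
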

The following proposition shows the variance of the gradient estimate of QASVRG reduces as fast as the objective, which is a key technique in our proof to control the stochastic gradient variance. This proposition is also proposed in \citet{nitanda2016} where they assume $f_i$ is convex and smooth. In this paper, we circumvent the convexity of $f_i$ by using Assumption \ref{interpolation}. The proof of Proposition \ref{varianceupperbound} is postponed to Appendix \ref{proof of Varbound}. 
\begin{proposition}[Variance upper bound]
\label{varianceupperbound}
Suppose Assumption \ref{interpolation} holds, $\nabla_k=\nabla f_{I_k}(x_{k+1})-\nabla f_{I_k}(\tilde{x})+\nabla f(\tilde{x})$ and $f_i\in\mathcal{F}_L$ for each $i\in[n]$, then we obtain the following inequality
\begin{equation}
\begin{aligned}
    &\mathbb{E}_{I_{k}}\|\nabla_k-\nabla f(x_{k+1})\|^2\\
    &\quad\leq4L\frac{n-b_k}{b_k(n-1)}\left(f(x_{k+1})-f(x^*)+f(\tilde{x})-f(x^*)\right),\nonumber
\end{aligned}
\end{equation}
where $|I_k|=b_k$ and $\mathbb{E}_{I_k}$ denotes the expectation with respect to the mini-batch $I_k$.
\end{proposition}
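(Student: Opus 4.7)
The plan is to reduce the claim to a standard ``variance of the sample mean'' calculation plus an interpolation-based replacement for the convexity argument used by Nitanda (2016). Define the population of per-index differences $\xi_i \triangleq \nabla f_i(x_{k+1}) - \nabla f_i(\tilde{x})$ for $i \in [n]$, whose empirical mean is $\bar{\xi} = \nabla f(x_{k+1}) - \nabla f(\tilde{x})$. By construction of the SVRG estimator,
\begin{equation*}
\nabla_k - \nabla f(x_{k+1}) = \frac{1}{b_k}\sum_{i \in I_k} \xi_i - \bar{\xi},
\end{equation*}
so the target quantity is exactly the squared deviation of a uniform-without-replacement mini-batch mean from its population mean.

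Next I would invoke the classical finite-population variance identity,
\begin{equation*}
\mathbb{E}_{I_k}\left\| \tfrac{1}{b_k}\sum_{i \in I_k}\xi_i - \bar{\xi}\right\|^2 = \frac{n-b_k}{b_k(n-1)} \cdot \frac{1}{n}\sum_{i=1}^n \|\xi_i - \bar{\xi}\|^2,
\end{equation*}
and bound $\tfrac{1}{n}\sum_i \|\xi_i - \bar{\xi}\|^2 \leq \tfrac{1}{n}\sum_i \|\xi_i\|^2$ (dropping the nonnegative $\|\bar{\xi}\|^2$). The triangle inequality then gives $\|\xi_i\|^2 \leq 2\|\nabla f_i(x_{k+1})\|^2 + 2\|\nabla f_i(\tilde{x})\|^2$, so everything reduces to bounding $\|\nabla f_i(x)\|^2$ by a function-value gap.

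This is where Assumption \ref{interpolation} replaces the convexity hypothesis used in the original SVRG analysis. Since $x^*$ is a global minimizer of every $f_i$, we have $\nabla f_i(x^*) = 0$. For an $L$-smooth function with a global minimizer at $x^*$, one gradient step from $x$ yields the standard estimate
\begin{equation*}
\|\nabla f_i(x)\|^2 \leq 2L\bigl(f_i(x) - f_i(x^*)\bigr),
\end{equation*}
which follows from $f_i(x - \tfrac{1}{L}\nabla f_i(x)) \geq f_i(x^*)$ combined with the descent lemma. Averaging over $i \in [n]$ and using $f = \tfrac{1}{n}\sum_i f_i$ converts both terms into $4L(f(x_{k+1}) - f(x^*))$ and $4L(f(\tilde{x}) - f(x^*))$; combining with the finite-population identity yields the advertised bound.

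The main obstacle is the interpolation step: without convexity of each $f_i$ the usual co-coercivity bound $\|\nabla f_i(x) - \nabla f_i(y)\|^2 \leq 2LD_{f_i}(x,y)$ is unavailable, so I have to take the detour through $\|\nabla f_i(x)\|^2 \leq 2L(f_i(x) - f_i(x^*))$ and accept the constant $4L$ (rather than the sharper $2L$ one would get under convexity of each $f_i$). A secondary subtlety worth flagging is reconciling the ``sampled i.i.d.'' description of $I_k$ with the factor $(n-b_k)/(b_k(n-1))$, which is the finite-population correction for sampling without replacement; this is how the target inequality should be interpreted, and is consistent with the mini-batch scheme used throughout Nitanda (2016).
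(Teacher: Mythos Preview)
Your proposal is correct and follows essentially the same route as the paper: the paper first invokes the finite-population identity (their Lemma~\ref{Exptrans}) to extract the factor $\tfrac{n-b_k}{b_k(n-1)}$, then (in their Lemma~\ref{varupperbound}) drops the variance to the second moment, splits through $\nabla f_i(x^*)=0$, and uses the descent-lemma bound $\|\nabla f_i(x)\|^2\le 2L(f_i(x)-f_i(x^*))$, which is available precisely because of interpolation. Your discussion of the without-replacement correction and of why interpolation substitutes for per-component convexity matches the paper's reasoning exactly.
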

\begin{theorem}[QASVRG (Single-stage)]
\label{convergencerateQASVRGsingle}
    Suppose Assumption \ref{Bregmanassump} and Assumption \ref{interpolation} hold, $D_h(x^*,z_0)\leq R^2$, $f_i\in\mathcal{F}_L$ for each $i\in[n]$ and choose any $\Tilde{y}_0\in\mathbb{R}^d$. Then Algorithm \ref{algorithm_for_quasar} with the choices of $\nabla_k=\nabla f_{I_k}(x_{k+1})-\nabla f_{I_k}(y_0)+\nabla f(y_0)$ and $A_k, B_k, b_k, \theta_k$ specified in Table \ref{parameterofQASVRG} satisfies
\begin{equation*}
         \mathbb{E}\left[\mathcal{E}_t \right]\simeq\left\{\begin{aligned}
         &\frac{LR^2}{\gamma^2t^2}+\left(\frac{p}{\gamma}+\epsilon\right)f(y_0), &\mu=0,\\
    &\left(1+\frac{\gamma}{\sqrt{8\kappa}}\right)^{-t}E_0+\frac{\mathcal{E}_0}{2}, &\mu>0\ ({\text{Opt}\ \rom{1}}),\\
    &\frac{LR^2}{\gamma^2t^2}+\left(\frac{p}{\gamma}+\epsilon\right)f(y_0), &\mu>0\ ({\text{Opt}\ \rom{2}}),
\end{aligned}
\right.
\end{equation*}
where $p\leq\frac{\gamma\bar{\mu}}{16}$ is user specified.
\end{theorem}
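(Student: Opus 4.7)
The approach is a Lyapunov analysis with energy $E_k \triangleq A_k\mathcal{E}_k + B_k D_h(x^*, z_k)$, the goal being a one-step inequality $\mathbb{E}[E_{k+1} - E_k] \leq \text{residual}_k$ that telescopes to the claim. First I would expand $E_{k+1} - E_k$ along the three updates of Algorithm \ref{algorithm_for_quasar}. For the mirror-descent step, the first-order optimality of $z_{k+1}$ together with the three-point identity for Bregman divergence yields
\begin{equation*}
\beta_k D_h(x^*, z_{k+1}) \leq \beta_k D_h(x^*, z_k) - \beta_k D_h(z_{k+1}, z_k) + \alpha_k[D_h(x^*, x_{k+1}) - D_h(x^*, z_{k+1})] - \langle \nabla_k, z_{k+1} - x^*\rangle.
\end{equation*}
For the gradient-descent step with $\rho_k = 1/L$, the $L$-smoothness of $\tilde f = f_{I_k}$ along $y_{k+1} = x_{k+1} - \rho_k \nabla_k$ gives a decrease on $\tilde f(y_{k+1})$. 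Combined with the coupling identity $x_{k+1} = (1-\tau_k)z_k + \tau_k y_k$ and the Bisearch inequality on $\tau_k$, the inner product $\langle \nabla_k, y_k - z_k\rangle$ is traded for a descent on $\tilde f$ up to the tolerance $\tilde\epsilon$ plus a $\|x_{k+1} - z_k\|^2$ remainder, which is absorbed by $\beta_k D_h(z_{k+1}, z_k)$ via \eqref{st_h}.

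Next I condition on the history through iteration $k$. The unbiasedness $\mathbb{E}_{I_k}[\nabla_k] = \nabla f(x_{k+1})$ converts $\langle \nabla_k, x^* - x_{k+1}\rangle$ into $\langle \nabla f(x_{k+1}), x^* - x_{k+1}\rangle$, and the strong quasar-convex inequality \eqref{uniform-quasar-convex-of-f} turns this into $\gamma(f(x^*) - f(x_{k+1})) - \gamma\mu D_h(x^*, x_{k+1})$; these terms then feed both the $A_{k+1}\mathcal{E}_{k+1}$ part of $E_{k+1}$ and the strong piece $B_{k+1} D_h(x^*, z_{k+1})$. The residual $\|\nabla_k - \nabla f(x_{k+1})\|^2$ left over from expanding $\|y_{k+1} - x_{k+1}\|^2$ is bounded by Proposition \ref{varianceupperbound}, whose right-hand side splits into a piece proportional to $f(x_{k+1}) - f(x^*)$ and one proportional to $f(y_0) - f(x^*)$. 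The batchsize rule in Table \ref{parameterofQASVRG} is engineered so that the prefactor $4L(n - b_k)/(b_k(n-1))$ is at most $p/\gamma$; the first piece then cancels against the quasar-convex descent, and only a harmless $(p/\gamma)f(y_0)$ residual remains each iteration, while Bisearch contributes the additive $\epsilon f(y_0)$ after summation.

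With the one-step inequality in hand, I telescope from $k = 0$ to $t - 1$ and divide by $A_t$. Substituting the three parameter schedules produces the three claimed rates. For $\mu = 0$ and for Option \rom{2}, the choice $A_k \propto (k+1)^2$, $B_k = 1$ gives $E_0/A_t = O(LR^2/\gamma^2 t^2)$ plus the residual $(p/\gamma + \epsilon)f(y_0)$. For Option \rom{1} under $\mu > 0$, the geometric schedule $A_k = (1 + \gamma/\sqrt{8\kappa})^k$ and $B_k = \mu A_k$ produce the contraction $(1 + \gamma/\sqrt{8\kappa})^{-t} E_0$; the constant $\mathcal{E}_0/2$ residual arises from the $f(y_0) - f(x^*)$ contribution to the variance bound, converted via \eqref{func_value_distance} so that the resulting $D_h(x^*, y_0)$ term stays inside a valid potential.

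The hardest step is the variance bookkeeping: Proposition \ref{varianceupperbound} estimates the variance in terms of $f(x_{k+1}) - f(x^*)$, not the $f(y_k) - f(x^*)$ that appears directly in $E_k$, so I must simultaneously track every $f(x_{k+1})$-term contributed by the smoothness descent, the mirror-descent three-point identity, the quasar-convex inequality, and the variance bound, and then arrange for their aggregate coefficient to be non-positive. Carrying out this cancellation is precisely what forces both the constraint $p \leq \gamma\bar\mu/16$ and the explicit $k$-dependence of $b_k$ in the general quasar-convex case, and it is the technical core of the argument.
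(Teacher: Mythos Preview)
Your proposal is correct and follows essentially the same Lyapunov argument as the paper: expand $E_{k+1}-E_k$ via the three-point identity and mirror-descent optimality, trade the coupling inner product for an $\tilde f$-descent via Bisearch, use $L$-smoothness for the gradient-descent step, take conditional expectation and apply quasar-convexity, control the variance by Proposition~\ref{varianceupperbound}, and use the batchsize schedule so the $f(x_{k+1})-f(x^*)$ piece cancels against $\bar a_k$. One minor correction: for the gradient-descent step the paper uses the $L$-smoothness of $f$ (not $\tilde f = f_{I_k}$), since the energy tracks $f(y_{k+1})$ and $y_{k+1}$ depends on $I_k$; concretely it writes $f(y_{k+1}) - f(x_{k+1}) \leq \langle\nabla f(x_{k+1}), y_{k+1}-x_{k+1}\rangle + \tfrac{L}{2}\|y_{k+1}-x_{k+1}\|^2$ and only then takes expectation, and similarly the residual $\mathcal{E}_0/2$ in Option~\rom{1} comes directly from summing the per-step variance term $\tfrac{\bar a_k}{2}(f(y_0)-f(x^*))$ rather than via~\eqref{func_value_distance}.
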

Under Assumption \ref{compactset}, $\{D_h(x^*,y_s)\}$ can be uniformly bounded by some constant if $\{y_s\}$ generated by Algorithm \ref{Multistagesvrg} are restricted to a compact set. Thus we can hide the Bregman divergence inside $O(\cdot)$ and $\widetilde{O}(\cdot)$. Note that we only need this assumption when $\mu=0$. According to Theorem \ref{convergencerateQASVRGsingle}, single-stage QASVRG is biased which means that an $\epsilon$-approximate solution can not be generated via single-stage QASVRG. For instance, the bias is upper bounded by $O\left(\left(\frac{p}{\gamma}+\epsilon\right)f(y_0)\right)$ when $\mu=0$, but the expectation of the optimality gap $\mathcal{E}_t$ can shrink at each stage with small $p$ and $q$ when $t=\Omega\left(\sqrt{\frac{LR^2}{\gamma^2f(y_0)}}\right)$. Consequently we need $O(\log(1/\epsilon))$ stages to generate an $\epsilon$-approximate solution.
\begin{corollary}
\label{complexASVRG}
Under Assumption \ref{compactset} and the same assumptions in Theorem \ref{convergencerateQASVRGsingle}, the overall complexity required for Algorithm \ref{Multistagesvrg} to achieve $\mathbb{E}\left[f(y_{s})-f(x^*)\right]\leq\epsilon$ is upper bounded by 
\begin{equation*}
    \begin{aligned}
    &O\left(\hspace{-.1cm}\left(n+\frac{nLR^2}{\gamma\epsilon n+\gamma\sqrt{\epsilon LR^2}}\log^+\left(\frac{L^{1/2}R}{q^{1/2}\gamma\epsilon^{9/14}}\right)\hspace{-.1cm}\right)\log\left(\frac{1}{\epsilon}\right)\hspace{-.1cm}\right),\\
    &O\left(\hspace{-.1cm}\left(n\hspace{-.05cm}+\frac{n{\kappa}}{\gamma^2 n+\gamma\sqrt{\kappa}}\log\left(\frac{2}{\gamma q}\right)\hspace{-.05cm}\log^+\hspace{-.1cm}\left(\frac{\kappa^{7/6}}{\gamma}\right)\hspace{-.1cm}\right)\log\left(\frac{1}{\epsilon}\right)\hspace{-.1cm}\right),\\
    &O\left(\hspace{-.1cm}\left(n+\frac{n\kappa}{\gamma^2n+\gamma^{3/2}\sqrt{\kappa}}\log^+\left(\frac{L^{1/2}R}{q^{1/2}\gamma\epsilon^{9/14}}\right)\hspace{-.1cm}\right)\log\left(\frac{1}{\epsilon}\right)\hspace{-.1cm}\right)
    \end{aligned}
\end{equation*}
for $\mu=0$ and $\mu>0$ (the last two bounds) respectively, where $q\in(0,1/4]$.
\end{corollary}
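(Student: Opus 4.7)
The plan is to reduce Corollary \ref{complexASVRG} to the single-stage guarantee of Theorem \ref{convergencerateQASVRGsingle} by (i) choosing the outer iteration count $t_s$ at stage $s$ just large enough to halve $\mathbb{E}[\mathcal{E}_s]$, (ii) estimating the number of gradient evaluations spent inside one stage, and (iii) multiplying by the number of stages needed to reach precision $\epsilon$. Because the single-stage bounds are biased (the residual $(p/\gamma+\epsilon)f(y_0)$ or $\mathcal{E}_0/2$ cannot be driven to zero by running longer), the multi-stage geometric shrinkage is essential. Assumption \ref{compactset} will be used only to ensure $D_h(x^{*},y_{s})$ stays uniformly bounded by an absolute constant when $\mu=0$, so that the dependence on $D_h(x^{*},y_{s})$ can be hidden inside $O(\cdot)$ and $\widetilde{O}(\cdot)$.

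For the two schedules with $t_s=\lceil\sqrt{17LD_h(x^{*},y_s)/(\gamma^{2}\bar\mu q f(y_s))}\,\rceil$ (i.e.\ $\mu=0$ and Option~II), I would first observe that this choice forces the deterministic term $LR^{2}/(\gamma^{2}t_s^{2})$ in Theorem \ref{convergencerateQASVRGsingle} to be at most $\tfrac{1}{17}\bar\mu q f(y_s)$, so that selecting $p,q,\epsilon$ small enough to make $q+p/\gamma+\epsilon\le 1/2$ yields $\mathbb{E}[\mathcal{E}_{s+1}]\le\tfrac12\mathbb{E}[\mathcal{E}_s]$. Hence $\lceil\log_2(\mathcal{E}_0/\epsilon)\rceil$ stages suffice. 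Plugging $f(y_s)\gtrsim\mathcal{E}_s$ into $t_s$ (and, for Option~II, also the quadratic growth inequality \eqref{func_value_distance} to replace $D_h(x^{*},y_s)$ by $O(\mathcal{E}_s/(\gamma\mu))$) gives $t_s\lesssim R\sqrt{L/(\gamma^{2}\epsilon)}$ and $t_s\lesssim\sqrt{\kappa}/\gamma^{3/2}$ in the two respective cases at the worst (last) stage. For Option~I the iteration count $t_s=\log_{1+\gamma/\sqrt{8\kappa}}(2/(\gamma q))=\widetilde{O}(\sqrt{\kappa}/\gamma)$ is fixed across stages and already encodes the geometric contraction through the $(1+\gamma/\sqrt{8\kappa})^{-t}$ factor.

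The per-stage cost is $n$ (for the anchor gradient $\nabla f(y_0)$) plus $\sum_{k=0}^{t_s-1}b_k$ mini-batch gradient evaluations plus the Bisearch overhead from Lemma \ref{complexityofBi}. For the $\mu=0$/Option~II schedule $b_k=\lceil\gamma\bar\mu n(2k+3)/(2(n-1)p+\gamma\bar\mu(2k+3))\rceil$ one checks $b_k\le\min\{n,\,O(\gamma\bar\mu k/p)\}$, so
\[
\sum_{k=0}^{t_s-1}b_k\;\le\;\min\Bigl\{\,nt_s,\;O\bigl(\gamma\bar\mu t_s^{2}/p\bigr)\Bigr\},
\]
and setting $p=\Theta(\gamma\bar\mu)$ converts this into the $\min\{LR^{2}/(\gamma\epsilon),\,nR\sqrt{L/\epsilon}/\gamma\}$ and $\min\{\kappa/\gamma^{2},\,n\sqrt{\kappa}/\gamma^{3/2}\}$ expressions, up to the $\log^{+}$ factors. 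For Option~I the batch size is the constant $b_k=O(\min\{n,\sqrt{\kappa}/\gamma\})$, so the per-stage stochastic cost is $O(\min\{n,\sqrt{\kappa}/\gamma\}\cdot\sqrt{\kappa}/\gamma)=O(\min\{n\sqrt{\kappa}/\gamma,\kappa/\gamma^{2}\})$ times the inner $\log(2/(\gamma q))$ factor.

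Multiplying the per-stage cost by the $O(\log(1/\epsilon))$ stage count and incorporating the $\log^{+}$ contribution of Bisearch (Lemma \ref{complexityofBi}), aggregated across all $t_s$ inner calls and all stages, produces the three bounds. The main obstacle will be the careful bookkeeping of these logarithmic factors: the Bisearch overhead depends on the current $\tilde\epsilon$ and the local Lipschitz constants, and one must verify that across all restarts these factors collapse into the $\log^{+}(\kappa^{7/6}/\gamma)$ and $\log^{+}(1/(\gamma\epsilon^{7/10}))$ exponents stated in the corollary, rather than accumulating additively. A secondary subtlety is that the halving argument yields a recursion on $\mathbb{E}[\mathcal{E}_s]$, so invoking it against the fresh randomness of each stage requires conditioning on the filtration generated by the previous anchor $y_0$.
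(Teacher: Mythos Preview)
Your plan is essentially the paper's proof: halve the error per stage via the single-stage bound, count $O(\log(1/\epsilon))$ stages, sum the mini-batch sizes over $t_s$ inner iterations, and multiply by the Bisearch cost from Lemma~\ref{complexityofBi}. Your handling of $\sum_k b_k$ via the $\min\{nt_s,\gamma\bar\mu t_s^{2}/p\}$ dichotomy is equivalent (up to constants) to the paper's direct manipulation $\sum_{k<t}b_k\le \gamma n t(2t+1)/(2(n-1)p+\gamma(2t+1))$, since $ab/(a+b)\asymp\min\{a,b\}$.

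The one place where you wave your hands and the paper does real work is the Bisearch log-factor for the $\mu=0$/Option~II schedules. There $\tilde\epsilon>0$ and $b=0$, so Lemma~\ref{complexityofBi} forces you to control $L\|y_k-z_k\|^{2}/\tilde\epsilon$. The paper does this by first extracting $\|x^{*}-z_k\|^{2}\lesssim R^{2}+(k+1)^{2}f(y_0)/L+(k+1)^{2}f(y_0)\epsilon/L$ from the per-step Lyapunov inequality $\mathbb{E}[E_{k+1}-E_k]\le \bar a_k(1/2+\epsilon)f(y_0)$, then bounding $\|\nabla_{k-1}\|$ via $\beta_{k-1}\|z_k-z_{k-1}\|$, and finally running the recursion $\|y_k-z_k\|\le\|y_{k-1}-z_{k-1}\|+O(R/\gamma+(k+1)\sqrt{f(y_0)/L})$ up to the stage length $t_s\lesssim\sqrt{LR^{2}/(\gamma^{2}\epsilon)}$. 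This is what produces the exponent $7/10$ in $\log^{+}(1/(\gamma\epsilon^{7/10}))$. For Option~I the computation is trivial because $b=\gamma\mu/4>0$ and $c=\sqrt{2\kappa}$, so $(1+c)L^{3}/b^{3}\lesssim\kappa^{7/2}/\gamma^{3}$ directly. Your proposal is correct; just be aware that this recursive iterate-tracking is the only substantive step beyond the outline you already have.
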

\paragraph{Proof Sketch}
The method we use to derive the convergence rates and complexity for each algorithm is unified. We  take the difference between each Lyapunov stage, and then take the conditional expectation to obtain the upper bound on $\mathbb{E}[E_{k+1}-E_k]$. Finally we sum over $k$ to conclude the convergence rates and a subsequent iteration complexity for each algorithm. Combining this complexity with that of Bisearch, we conclude the overall complexity in each corollary. For more details, see Appendix~\ref{proofofconvergencerate} and \ref{proofofcomplexity}.

The last two bounds of QASVRG correspond to two different choices of parameters in Table \ref{parameterofQASVRG} (Option \rom{1} and \rom{2}). The complexity bound derived with Option \rom{1} has a more unfavorable dependency on $\kappa$ while the complexity bound derived with Option \rom{2} has a more unfavorable dependency on $\epsilon$. This suggests Option \rom{1} performs better on well-conditioned problems e.g. $\kappa\epsilon<1$ and Option \rom{2} performs better on ill-conditioned problems e.g. $\kappa\epsilon\gg 1$.

In the complexity bounds of QASGD and QASVRG, extra logarithmic factors are included, which comes from Bisearch. \citet{hinder} prove that the complexity of Bisearch is at most a logarithmic factor given that the function involved in this subroutine is $L$-smooth. In the stochastic setting, where the functions involved are single $f_i$ or a mini-batch of $f_{I_k}$, we need to assume $f_i\in\mathcal{F}_L$ for all $i$ due to the uniform sampling.

We summarize our methods and some existing methods in Table \ref{summary}, including their corresponding assumptions and complexity upper bounds. To summarize, both QASGD and QASVRG achieve better complexity upper bounds than QAGD when $n$ is large, and QASGD enjoys a faster convergence rate and lower complexity than SGD under a stronger assumption. While QASVRG has the potential to be more computationally expensive than SGD due to the full gradient and function value access once a stage, it enjoys a theoretically faster convergence rate than SGD.
\begin{figure*}[t]
\centering 
\subfigure{
\label{Dataset1.1}
\includegraphics[width=5.2cm,height =3.2cm]
{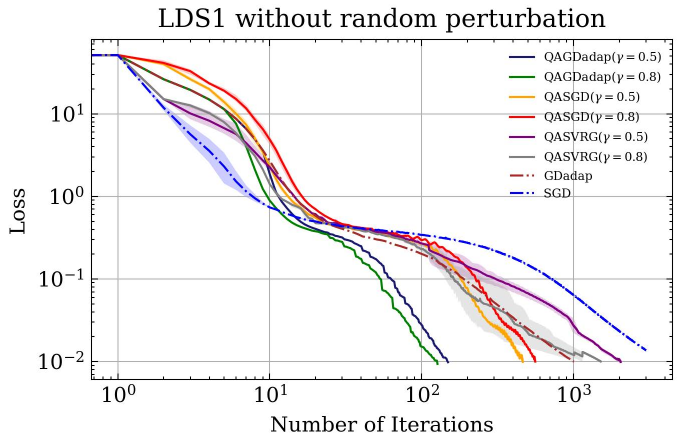}}\subfigure{
\label{Dataset1.2}
\includegraphics[width=5.2cm,height =3.2cm]{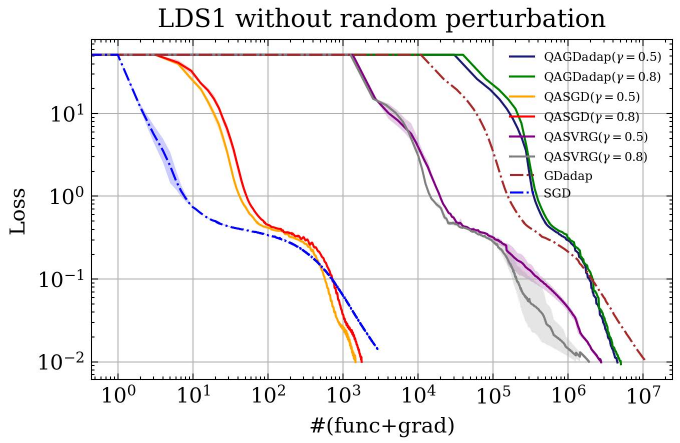}}
\subfigure{
\label{Dataset1.3}
\includegraphics[width=5.2cm,height =3.2cm]{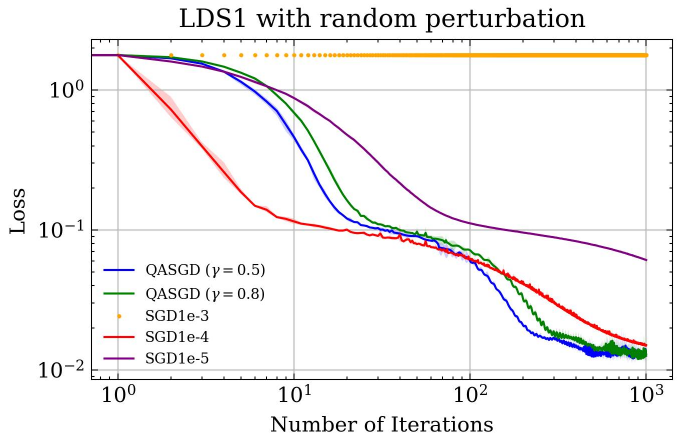}}
\subfigure{
\label{Dataset2.1}
\includegraphics[width=5.2cm,height =3.2cm]{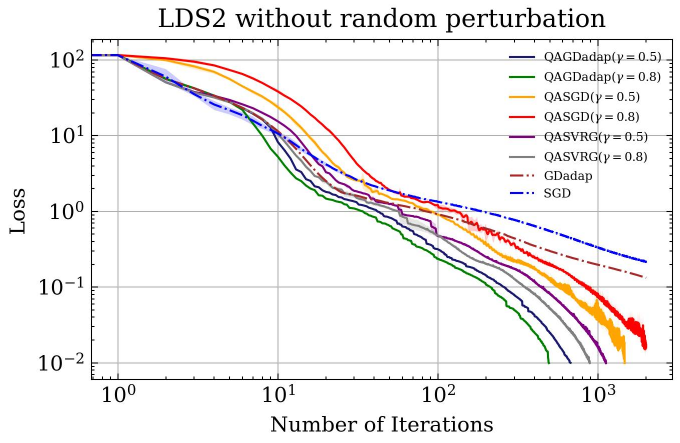}}\subfigure{
\label{Dataset2.2}
\includegraphics[width=5.2cm,height =3.2cm]{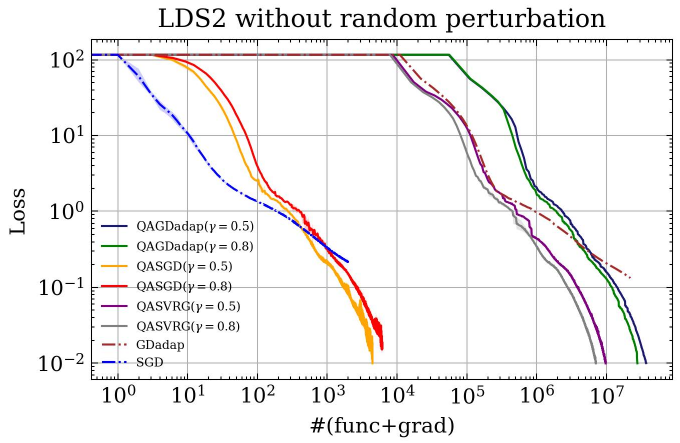}}
\subfigure{
\label{Dataset2.3}
\includegraphics[width=5.2cm,height =3.2cm]{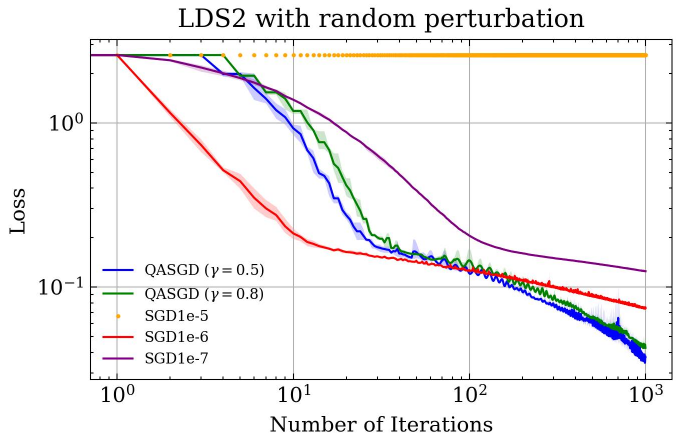}}
\subfigure{
\label{Dataset3.1}
\includegraphics[width=5.2cm,height =3.2cm]{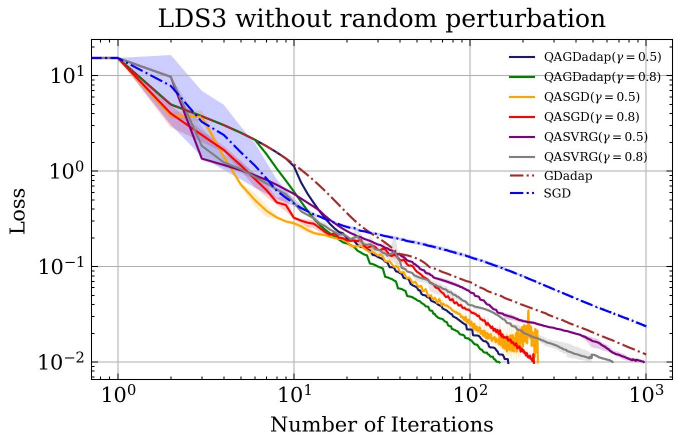}}\subfigure{
\label{Dataset3.2}
\includegraphics[width=5.2cm,height =3.2cm]{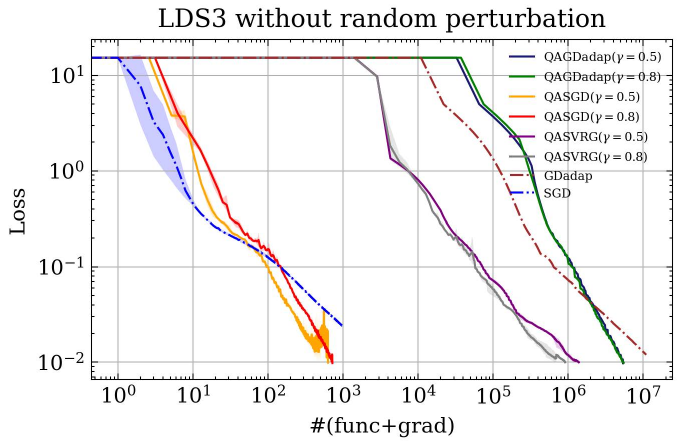}}
\subfigure{
\label{Dataset3.3}
\includegraphics[width=5.2cm,height =3.2cm]{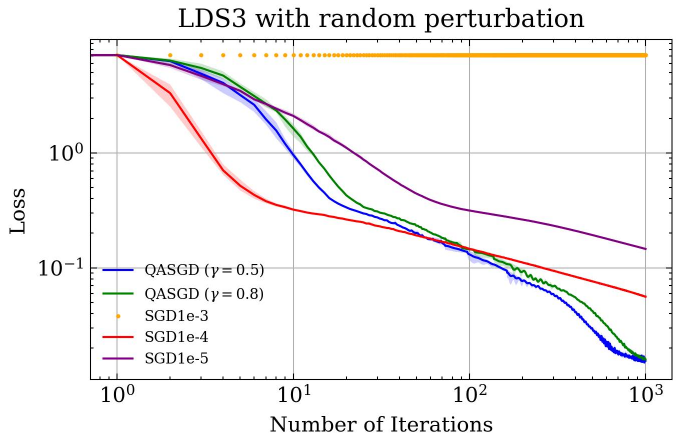}}
\caption{Evaluation on three different LDS instances. We choose $\epsilon=10^{-2}$, the stepsize to be $5\times10^{-5},1\times10^{-6},1\times10^{-4}$ for SGD, $L=1\times 10^6,1\times 10^8, 1\times 10^5$ for QASGD and $L=3\times 10^4,1\times 10^6,1\times 10^4$ for QASVRG in LDS1, LDS2 and LDS3. The flat line in the third column means the loss blows up to infinity with this choice of stepsize.}
\label{LDS}
\end{figure*}
\section{Simulations}
\label{simulation}
In this section, we evaluate our methods on example \eqref{learningLDS} in the Euclidean setting using synthetic dataset where each input sequence $x^{(i)}\sim\mathcal{N}(0,1)$ coordinate-wise. Different from \citet{hardt}, we generate $N$ training examples and random perturbations $\xi_t$ before training instead of generating fresh data and random perturbations at each iteration. Thus example \eqref{learningLDS} can be reformulated as
\begin{equation*}
    \min\left\{F(\hat{\Theta})=\frac{1}{N}\sum_{i=1}^N\left[\frac{1}{T}\sum_{t=1}^T\left\|\hat{y}_t^{(i)}-y_t^{(i)}\right\|^2\right]\right\},
\end{equation*}
where the superscript $(i)$ represents that the output is generated using $i^{\text{th}}$ training data $(x^{(i)},y^{(i)})$. Similar to \citet{hardt}, the actual objective in our experiments is $F(\hat{\Theta})=\frac{1}{N}\sum_{i=1}^N\left[\frac{1}{T-T_1}\sum_{t>T_1}\|\hat{y}_t^{(i)}-y_t^{(i)}\|^2\right]$, where $T_1=T/4$.
We generate the true dynamical system and data the same way as in \citet{hardt} using parameters $N=5000,d=20,T=500$. Following \citet{hinder}, we generate the initial iterate $(\hat{A}_0,\hat{C}_0,\hat{D}_0)$ by perturbing the parameters of the true system and keep the spectral radius of $\hat{A}_0$ strictly less than 1. We choose the value of random seed to be in $\{0,12,24,36,48\}$ for generating five true LDS instances and their initialization. We only present simulation results of $\{0,24,48\}$ in this sections and the remaining results are provided in section \ref{additionalsimul}. Note that $\hat{B}$ is not a trainable parameter since $B$ is known. As is described in \citet{hardt}, it is intractable to calculate the precise value of quasar-convexity parameter $\gamma$ of LDS objective or even estimate it. Thus we evaluate our methods with $\gamma\in\{0.5,0.8\}$. We observe that the imprecise $\gamma$ does not affect the performance of all methods involved. As is shown in Table \ref{parameterofQASGD}, $\eta$ is involved in the parameter choice of QASGD, which we choose to be $\min\left\{\frac{1}{L}, \frac{\sqrt{2}\gamma\|z_0\|}{\sigma(t+1)^{3/2}}\right\}$ in our simulations. While $\sigma$ in Assumption \ref{boundedgrad} relies on the compact set and initialization, we observe that choosing $\sigma=1$ is a robust choice for all of our simulations by evaluating QASGD on three LDS instances with $\sigma\in\{1,10,10^2,10^3\}$ (See Figure \ref{LDSadditionalsigma} in Appendix \ref{additionalsimul} for more details). According to the analysis in \citet{hardt}, $F(\hat{\Theta})$ is $L$-weakly smooth, and it is still unknown whether $F(\hat{\Theta})$ is $L$-smooth. Given that the parameter choice of our methods involves $L$, we fine-tune the value of $L$ for QASGD and QASVRG and choose the best stepsize for SGD by extensive grid search in each instance. We use the adaptive stepsize for QAGD and GD the same way as in \citet{hinder}. We consider the random noise $\xi_t\sim\mathcal{N}(0,10^{-2})$ or $\xi_t=0$ perturbing the output of the true systems. If $\xi_t\sim\mathcal{N}(0,10^{-2})$, the interpolation assumption will be violated since $\Theta$ is no longer the global minimizer of the objective generated by each training example. Thus we only evaluate SGD and QASGD in this case. In 
Algorithm \ref{Multistagesvrg}, it may be difficult to calculate $t$ especially when $L$ is unknown. Therefore we can specify $t$ to be relatively large (we choose $t=10^4$) and use an appropriate restart scheme in Algorithm \ref{algorithm_for_quasar} to boost the performance of QASVRG. Following \citet{nitanda2016}, when the relation $\langle\nabla_k,y_{k+1}-y_k\rangle>0$ holds, we break Algorithm \ref{algorithm_for_quasar} to return $y_s$ and start the next stage. Since we generate the initial iterate with $\rho(\hat{A}_0)<1$, we don't use gradient clipping or projection proposed in \citet{hardt} during training. We generate the error bar in Figure \ref{LDS} by averaging the results obtained from running each stochastic algorithm three times and choose the maximum and minimum value pointwise to be the upper bar and lower bar.

The simulation results in Figure \ref{LDS} validate our methods and show the superiority of our methods in terms of the convergence speed and the overall complexity. In addition, both QASGD and QASVRG are robust to the random sampling of the stochastic gradient. Code is available at \href{https://github.com/QiangFu09/Stochastic-quasar-convex-acceleration}{https://github.com/QiangFu09/Stochastic-quasar-convex-acceleration}. There is an interesting phenomenon in our simulations. While the convergence rate of QASGD matches the rate of SGD when $t$ is large, Figure \ref{LDS} still shows the substantial superiority of QASGD over QASVRG. In fact, QASGD enjoys a convergence rate of $O\left(\frac{1}{t^2}+\frac{1}{\sqrt{t}}+\frac{\epsilon}{2}\right)$ indicating rapid initial phase where $O\left(\frac{1}{t^2}\right)$ dominates the convergence. We speculate that QASGD in most of our simulations does not escape the initial phase and thus enjoys a fast convergence. The simulation results above lead to a reconsideration about whether we need the bounded gradient assumption in QASGD or not, as the objective of LDS does not satisfy this assumption but the performance of QASGD on LDS is still favorable and robust. We believe that this assumption is used in the theoretical analysis but may not be necessary.
\section{Disscussion}
In this paper, we propose QASGD and QASVRG achieving fast convergence and low complexity under their corresponding assumptions. We present our algorithms in a unified framework using a single energy-based analysis to establish the convergence rates and complexity of QAGD, QASGD and QASVRG. We close with a brief discussion of some possible future work.

First, we introduced the bounded gradient assumption for QASGD, but it remains to be seen whether we can weaken this assumption to some extent. Given that \citet{gower2021} and \citet{jin2020convergence} establish the convergence of SGD for quasar-convex functions under the ER condition and bounded variance assumption respectively,
it would be of interest to see whether it is possible to apply these weaker assumptions in QASGD.

Second, QAGD are proven near-optimal in \citet{hinder} based on the lower bound they establish for first-order deterministic methods. In future work we hope to establish a worst case complexity lower bound for first-order stochastic methods applied to quasar-convex functions under different assumptions. We expect such bounds will prove that our methods are nearly optimal as well.

Moreover, a higher order method usually leads to a better convergence rate. \citet{nesterov2006cubic} propose the cubic regularized Newton method to optimize star-convex functions ($\gamma=1$). Therefore, we are also interested in whether it is possible to use higher order methods to improve the convergence of our methods.

Finally, we hope to exploit more applications of quasar-convex functions in machine learning.


\section*{Acknowledgements}
We would like to thank our anonymous reviewers for constructive comments, thank Kyurae Kim (University of Pennsylvania) for helpful discussions on citations and thank Oliver Hinder (University of Pittsburgh) and Nimit Sohoni (Stanford University) for sharing codes of LDS.


\bibliography{ref}
\bibliographystyle{icml2023}

\newpage
\appendix
\onecolumn
\section{Helpful Lemmas and Assumptions}
\begin{lemma}[Three-point identity]
\label{Three-point identity}
For all $x\in \text{dom }h$ and $y,z\in \text{int}(\text{dom }h)$
\begin{equation}
\label{TPI}
    D_h(x,y)-D_h(x,z)=-\langle\nabla h(y)-\nabla h(z),x-y\rangle-D_h(y,z),
\end{equation}
where $D_h(x,y)=h(x)-h(y)-\langle\nabla h(y),x-y\rangle$.
\end{lemma}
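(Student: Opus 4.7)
The plan is to prove the identity by direct algebraic expansion, since it is a purely formal identity that does not use convexity, smoothness, or any other property of $h$ beyond differentiability. First, I would write out each of the three Bregman divergences $D_h(x,y)$, $D_h(x,z)$, and $D_h(y,z)$ using the definition $D_h(a,b) = h(a) - h(b) - \langle \nabla h(b), a - b\rangle$. Subtracting the first two, the $h(x)$ terms cancel, leaving
\[
D_h(x,y) - D_h(x,z) = h(z) - h(y) - \langle \nabla h(y), x - y\rangle + \langle \nabla h(z), x - z\rangle.
\]

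Next, the goal is to match this expression against the right-hand side of \eqref{TPI}. To do so, I would add and subtract $\langle \nabla h(z), y\rangle$ inside the inner products, which allows the split $\langle \nabla h(z), x - z\rangle = \langle \nabla h(z), x - y\rangle + \langle \nabla h(z), y - z\rangle$. Grouping the two inner products containing $x - y$ yields $-\langle \nabla h(y) - \nabla h(z), x - y\rangle$, while the remaining terms $h(z) - h(y) + \langle \nabla h(z), y - z\rangle$ are exactly $-D_h(y,z)$ by definition. Combining these completes the identity.

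The only ``obstacle'' is careful sign bookkeeping, which is why this is stated as a helper lemma rather than a substantive result; no part of the argument requires strong convexity of $h$ or even differentiability beyond the existence of $\nabla h$ at the two interior points $y$ and $z$.
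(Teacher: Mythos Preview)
Your proposal is correct; the identity follows exactly by the direct expansion you describe, and the sign bookkeeping checks out. The paper itself does not supply a proof of this lemma---it is listed among the ``Helpful Lemmas'' in the appendix as a standard fact and used without justification---so your argument is at least as complete as what the paper provides.
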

\begin{lemma}[Fenchel-Young inequality]
For all $x,y\in\mathbb{R}^d$ and $p\neq 0$, we have
\begin{equation}
\label{fenchelyoung}
    \langle x,y \rangle +\frac{1}{p}\|y\|^p \geq-\frac{p-1}{p}\|x\|_*^{\frac{p}{p-1}},
\end{equation}
where $\|\cdot\|_*$ denotes the conjugate norm of $\|\cdot\|$.
\end{lemma}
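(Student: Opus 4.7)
The plan is to recognize this as a standard consequence of the scalar Young's inequality together with the duality pairing bound $\langle u,v\rangle \leq \|u\|_*\|v\|$. I will assume throughout that $p > 1$ (the case implicitly intended, since for $p=1$ or $p<0$ the statement as written either degenerates or requires reversed inequalities); then $q := p/(p-1)$ is the Hölder conjugate of $p$ with $1/p + 1/q = 1$, and the inequality to prove becomes
\[
\langle x,y\rangle + \tfrac{1}{p}\|y\|^p + \tfrac{1}{q}\|x\|_*^{q} \geq 0.
\]

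The first step is to bound the inner product from below by applying the definition of the dual norm to $-x$:
\[
-\langle x,y\rangle \;=\; \langle -x,y\rangle \;\leq\; \|{-x}\|_*\,\|y\| \;=\; \|x\|_*\,\|y\|.
\]
The second step is to apply the classical scalar Young inequality $ab \leq \frac{a^p}{p} + \frac{b^q}{q}$ for nonnegative reals $a,b$ (a one-line convexity argument, e.g.\ from $\log$ being concave, or equivalently from the AM–GM weighted form), with $a = \|y\|$ and $b = \|x\|_*$. This yields
\[
\|x\|_*\,\|y\| \;\leq\; \tfrac{1}{p}\|y\|^p + \tfrac{1}{q}\|x\|_*^{q}.
\]
Chaining the two displayed inequalities gives $-\langle x,y\rangle \leq \tfrac{1}{p}\|y\|^p + \tfrac{1}{q}\|x\|_*^{q}$, which, after substituting $1/q = (p-1)/p$ and rearranging, is exactly the claimed bound.

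There is no real obstacle; the only subtlety is verifying the range of $p$ for which the argument as stated is valid, since the inequality $\langle -x,y\rangle \leq \|x\|_*\|y\|$ and the scalar Young inequality both require $p,q > 1$ (equivalently $p > 1$). For the applications in the body of the paper this is the relevant regime, so I would either restrict to $p>1$ in the statement or invoke the convex-conjugate derivation $f(y) + f^*(x) \geq \langle x,y\rangle$ applied to $f(\cdot) = \tfrac{1}{p}\|\cdot\|^p$ (whose conjugate is $\tfrac{1}{q}\|\cdot\|_*^q$) with $-x$ in place of $x$, which packages the two steps into a single invocation of Fenchel duality.
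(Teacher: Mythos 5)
Your proof is correct and is the standard argument; the paper states this lemma without any proof, treating it as a known fact, so there is no alternative derivation to compare against. Your two-step route (dual-norm bound $\langle -x,y\rangle\leq\|x\|_*\|y\|$ followed by scalar Young's inequality with conjugate exponents $p$ and $q=p/(p-1)$) is exactly what one would write down, and your remark that the hypothesis ``$p\neq 0$'' should really be ``$p>1$'' is a legitimate correction to the statement as printed --- in the body of the paper the inequality is only ever invoked with $p=2$ (to absorb terms of the form $\frac{1}{\beta_k}\langle\nabla_k,z_k-z_{k+1}\rangle-\frac{\bar{\mu}}{2}\|z_{k+1}-z_k\|^2$), so the restriction costs nothing.
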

\begin{lemma}[\citet{hinder}, Lemma 2]
\label{Existenceofalpha}
Let $f:\mathbb{R}^d\rightarrow\mathbb{R}$ be differentiable and let $y,z\in\mathbb{R}^d$. For $\tau\in\mathbb{R}$ define $x_{\tau}\triangleq\tau y+(1-\tau)z$. For any $c\geq 0$ there exists $\tau\in [0,1]$ such that
\begin{equation}
    \tau\langle \nabla f(x_{\tau}),y-z\rangle \leq c\left(f(y)-f(x_{\tau})\right).
    \label{EOGa}
\end{equation}
\end{lemma}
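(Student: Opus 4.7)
The plan is to reduce the statement to a one-dimensional argument along the line segment joining $z$ and $y$. Define $g:[0,1]\to\mathbb{R}$ by $g(\tau):=f(x_\tau)=f(\tau y+(1-\tau)z)$. Since $f$ is differentiable, so is $g$, with $g'(\tau)=\langle\nabla f(x_\tau),y-z\rangle$ by the chain rule. This lets me rewrite the desired inequality \eqref{EOGa} as the one-variable statement $\tau\,g'(\tau)\leq c\,(g(1)-g(\tau))$, and the task becomes to exhibit some $\tau\in[0,1]$ for which this holds.

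Next I would pick $\tau^{*}\in\arg\min_{\tau\in[0,1]}g(\tau)$, which exists because $g$ is continuous on a compact interval. The key claim is that this $\tau^{*}$ itself works, and I would verify it by splitting into three cases based on where the minimum lies. If $\tau^{*}\in(0,1)$, the first-order condition gives $g'(\tau^{*})=0$, so the left side vanishes, while the right side $c\,(g(1)-g(\tau^{*}))$ is nonnegative by optimality and by the assumption $c\geq 0$. If $\tau^{*}=0$, the left side is $0$ (because of the factor $\tau$) and again $g(1)\geq g(0)$. If $\tau^{*}=1$, the left side equals $g'(1)$, which must be $\leq 0$ because the minimum is attained at the right endpoint (the left-derivative sign forces it), while the right side equals $0$; so the inequality holds with both sides nonpositive.

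The argument uses nothing beyond continuity of $g$ on $[0,1]$, differentiability of $f$, and first-order optimality conditions, so I do not anticipate any technical obstacle. The only mild subtleties are handling the boundary cases $\tau^{*}\in\{0,1\}$ via one-sided derivatives, and keeping track of the direction of the inequality, which is why the hypothesis $c\geq 0$ is used. Notably, the statement does not require any convexity, smoothness, or quasar-convexity of $f$, which is what makes this lemma a clean structural tool for justifying the binary line search subroutine.
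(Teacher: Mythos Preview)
Your proposal is correct and follows essentially the same approach as the paper's proof: both reduce to the one-variable function $g(\tau)=f(x_\tau)$ with $g'(\tau)=\langle\nabla f(x_\tau),y-z\rangle$ and then exhibit a suitable $\tau$ via first-order optimality considerations. Your organization around the global minimizer $\tau^*\in\arg\min_{[0,1]}g$ is a slightly cleaner packaging of the same three cases the paper treats sequentially (check $\tau=1$ via the sign of $g'(1)$, check $\tau=0$ via $g(0)$ vs.\ $g(1)$, then fall back to an interior critical point), but the underlying argument is identical.
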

In fact, we can slacken condition \eqref{EOGa} to some extent, and we can find an appropriate momentum parameter $\tau$ satisfying
\begin{equation}
    \tau\langle\nabla f(x_{\tau}),y-z\rangle-\tau^2b\|y-z\|^2\leq c\left(f(y)-f(x_{\tau})\right)+\tilde{\epsilon},
    \label{slackEOGa}
\end{equation}
for $b,c,\tilde{\epsilon}\geq 0$. Existence of $\tau$ satisfying condition \eqref{EOGa} implies the existence of $\tau$ satisfying condition \eqref{slackEOGa}.
\begin{lemma}
    If $f:\mathbb{R}^d\rightarrow\mathbb{R}$ is $L$-smooth, then for any $x,y\in\mathbb{R}^d$
    \begin{equation}
    \label{lsmooth}
        f(y)\leq f(x)+\langle\nabla f(x),y-x\rangle+\frac{L}{2}\|y-x\|^2.
    \end{equation}
\end{lemma}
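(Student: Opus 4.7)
The plan is to establish this classical descent inequality via the fundamental theorem of calculus applied to $f$ along the segment from $x$ to $y$, combined with Cauchy--Schwarz and the Lipschitz property of $\nabla f$ that $L$-smoothness provides. The argument uses only the $L$-smoothness hypothesis; no convexity or quasar-convexity is required. This is a textbook lemma, so I expect no real obstacle, only the need to justify the intermediate integral representation.

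First I would introduce the scalar auxiliary function $\phi(t) \triangleq f(x + t(y-x))$ for $t \in [0,1]$. Since $f$ is differentiable, the chain rule gives $\phi'(t) = \langle \nabla f(x+t(y-x)), y-x\rangle$, so the fundamental theorem of calculus yields $f(y) - f(x) = \int_0^1 \langle \nabla f(x+t(y-x)), y-x\rangle \, dt$. Subtracting $\langle \nabla f(x), y-x\rangle = \int_0^1 \langle \nabla f(x), y-x\rangle\, dt$ from both sides rewrites the quantity of interest as $f(y) - f(x) - \langle \nabla f(x), y-x\rangle = \int_0^1 \langle \nabla f(x+t(y-x)) - \nabla f(x), y-x\rangle \, dt$.

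Next I would bound the integrand pointwise. Applying Cauchy--Schwarz gives $\langle \nabla f(x+t(y-x)) - \nabla f(x), y-x\rangle \leq \|\nabla f(x+t(y-x)) - \nabla f(x)\|\cdot\|y-x\|$, and the $L$-smoothness hypothesis $\|\nabla f(u) - \nabla f(v)\| \leq L\|u-v\|$ specialized at $u = x+t(y-x)$, $v = x$ then yields the upper bound $tL\|y-x\|^2$. Pulling constants outside the integral and evaluating $\int_0^1 t \, dt = 1/2$ produces the claimed bound $\frac{L}{2}\|y-x\|^2$, completing the proof.

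The only mild subtlety is measurability of the integrand in the fundamental theorem step, which follows immediately from continuity of $\nabla f$ (itself a consequence of $L$-Lipschitzness). Because the derivation bounds a signed scalar quantity by its absolute value, the resulting inequality is naturally one-sided, matching the statement of the lemma.
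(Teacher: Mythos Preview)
Your argument is correct and is the standard textbook proof of the descent lemma: integrate along the segment, subtract the first-order term, and bound the remainder using Cauchy--Schwarz together with the $L$-Lipschitz gradient. The paper itself states this lemma without proof, treating it as a well-known auxiliary result, so there is no alternative argument to compare against; your derivation is exactly the classical one and would be accepted without change.
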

Now We introduce an algorithm to effectively search the "good" $\tau$ for any $L$-smooth functions.
\begin{lemma}[\citet{hinder}, C.2]
\label{complexityofBi}
For $L$-smooth $f:\mathbb{R}^d\rightarrow\mathbb{R}$, $x,v\in\mathbb{R}^n$ and scalars $b,c,\tilde{\epsilon}\geq0$, Algorithm \eqref{Bisearch} computes $\alpha\in[0,1]$ satisfying \eqref{slackEOGa} with at most
$$6+3\left\lceil\log_2^+\left((4+c)\min\left\{\frac{2L^3}{b^3},\frac{L\|y-z\|^2}{2\tilde{\epsilon}}\right\}\right)\right\rceil$$
function and gradient evaluations, where $\log^+(\cdot)\overset{\Delta}{=}\max\{\log(\cdot),1\}$.
\end{lemma}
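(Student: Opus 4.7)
The plan is to analyze Bisearch as a bisection over an auxiliary function derived from \eqref{slackEOGa}. Set $g(\tau):=\tilde f(\tau y+(1-\tau)z)$; by $L$-smoothness of $\tilde f$, $g$ is $L\|y-z\|^2$-smooth on $[0,1]$ and $g'(\tau)=\langle\nabla\tilde f(x_\tau),y-z\rangle$ is $L\|y-z\|^2$-Lipschitz. Condition \eqref{slackEOGa} becomes
\[
\phi(\tau):=\tau g'(\tau)-\tau^2 b\|y-z\|^2-c\bigl(g(1)-g(\tau)\bigr)-\tilde\epsilon\leq 0,
\]
so Bisearch is a bisection searching for a non-positive value of $\phi$ over $[0,1]$.

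I would first extract from the proof of Lemma \ref{Existenceofalpha} the dichotomy that either $\tau\in\{0,1\}$ witnesses the unperturbed condition (in which case \eqref{slackEOGa} holds a fortiori at that endpoint) or there exists an interior $\tau^\star\in(0,1)$ with $g'(\tau^\star)\leq 0$ and $g(\tau^\star)\leq g(1)$. The $O(1)$ initial endpoint checks account for the additive $6$ in the bound. In the interior case, the bracket $[\tau_L,\tau_R]$ is initialized to enclose $\tau^\star$; each step evaluates $g$ and $g'$ at the midpoint $\tau_M=(\tau_L+\tau_R)/2$, returns $\tau_M$ if $\phi(\tau_M)\leq 0$, and otherwise uses the signs of $g'(\tau_M)$ and $g(1)-g(\tau_M)$ to discard the half of $[\tau_L,\tau_R]$ that cannot contain a witness $\tau^\star$. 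This costs at most three function/gradient evaluations per step (the multiplicative $3$) and halves the bracket width.

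The convergence argument is to show that once the bracket width $\delta$ is sufficiently small, every $\tau\in[\tau_L,\tau_R]$ must satisfy $\phi(\tau)\leq 0$. Using $L\|y-z\|^2$-Lipschitz control on $g$ and $g'$, one bounds the variation of $\phi$ on the bracket by a term of order $(4+c)L\|y-z\|^2$ times a polynomial in $\delta$ and the bracket location, and matches this against the two available slacks. Matching against $\tilde\epsilon$ yields a threshold $\delta\lesssim \tilde\epsilon/(L\|y-z\|^2)$; matching against $\tau^2 b\|y-z\|^2$ — which is smallest near $\tau=0$ and therefore requires tracking the minimum plausible $\tau^\star$ remaining after $k$ halvings — yields the subtler threshold of order $\delta\lesssim b^3/L^3$, the cubic exponent arising from the nonlinear interplay of the quadratic $\tau^2$ scaling in the slack with the linear scaling of the Lipschitz residual in $\delta$. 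The bisection halts when the smaller threshold is reached, producing a halving count of $\bigl\lceil\log_2^+\bigl((4+c)\min\{2L^3/b^3,\,L\|y-z\|^2/(2\tilde\epsilon)\}\bigr)\bigr\rceil$, and summing $6+3k$ gives the stated bound.

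The main obstacle I expect is precisely this nonlinear coupling. The bookkeeping must combine three Lipschitz estimates (on $g$, $g'$, and on the product $\tau g'$), carefully absorb the constants into the $(4+c)$ factor, and track how the witness location $\tau^\star$ relates to the current bracket width — the latter is what lifts a naive $L/b$ scaling up to the $L^3/b^3$ that appears in the bound. The remaining pieces — the dichotomy setup at initialization, verifying that each halving preserves the feasibility-witness invariant, and summing initialization, per-step, and termination costs — are routine once this termination-width analysis is in place.
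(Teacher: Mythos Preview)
The paper does not prove this lemma at all: it is stated with the citation ``\citet{hinder}, C.2'' and used as a black box, with no argument given. So there is no ``paper's own proof'' to compare your proposal against.

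That said, a few remarks on your outline relative to Algorithm~\ref{Bisearch} as it is actually written. Your description of the bisection invariant --- ``uses the signs of $g'(\tau_M)$ and $g(1)-g(\tau_M)$ to discard the half'' --- does not match the algorithm: the branch test is $g(\tau)\leq g(\delta)$, where $\delta=1-g'(1)/(L\|z-y\|^2)$ is a fixed point chosen so that, by $L\|y-z\|^2$-Lipschitzness of $g'$, one has $g'(\delta)\geq 0$ while $g(\delta)\leq g(1)$. The bracket $[\mathrm{lo},\mathrm{hi}]$ thus maintains the invariant that $g(\mathrm{hi})\leq g(\delta)$ and $g(\mathrm{lo})>g(\delta)$ (or $\mathrm{lo}=0$), which traps a point where $g$ attains its minimum over $[0,\delta]$; this is not the same as tracking a sign change of $g'$. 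Your termination analysis is correct in spirit --- one threshold comes from the additive slack $\tilde\epsilon$ and the other from the quadratic term $b\tau^2\|y-z\|^2$ --- but the $L^3/b^3$ factor in the original proof arises more directly: once the bracket width is small, the trapped near-minimizer $\tau$ has $g'(\tau)$ close to $0$ by smoothness, and then one needs $\tau\cdot g'(\tau)\leq \tau^2 p$, which after using $|g'(\tau)|\lesssim L\|y-z\|^2\cdot(\text{width})$ and a lower bound on $\tau$ (itself controlled by $g'(1)>p$ and smoothness) yields the cubic. Your description of this step as ``nonlinear interplay'' is on target but would need to be made concrete along these lines. The constant accounting ($6$ and the factor $3$) is bookkeeping you would recover from the initial endpoint and guess checks plus the per-iteration evaluations of $g$ and $g'$.
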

\begin{algorithm}[H]
\caption{Bisearch($f,y,z,b,c,\tilde{\epsilon}$,[guess])}
\label{Bisearch}
\begin{algorithmic}[1]
\REQUIRE{$f$ is $L$-smooth; $z,y\in\mathbb{R}^d$; $c\geq 0$; $\text{"guess"}\in[0,1]$ if provided. "guess" can be the momentum parameter under convexity or other.}
\STATE{Define $g(\alpha)\overset{\Delta}{=}f(\tau y+(1-\tau)z)$ and $p\overset{\Delta}{=}b\|z-y\|^2$}
\STATE{\textbf{if} guess provided \textbf{and} $cg(\text{guess})+\text{guess}\cdot (g'(\text{guess})-\text{guess}\cdot p)\leq cg(1)$ \textbf{then return} guess}
\STATE{\textbf{if} $g'(1)\leq p+\tilde{\epsilon}$ \textbf{then return} 1;}
\STATE{\textbf{else if} $c=0$ or $g(0)\leq g(1)+\tilde{\epsilon}/c$ \textbf{then return} 0;}
\STATE{$\delta\gets1-g'(1)/L\|z-y\|^2$}
\STATE{\textbf{lo}$\gets0$, \textbf{hi}$\gets\delta$, $\tau\gets\delta$}
\WHILE{$cg(\tau)+\tau( g'(\tau)-\tau p)>cg(1)+\tilde{\epsilon}$}
\STATE{$\tau\gets(\textbf{lo}+\textbf{hi})/2$}
\STATE{\textbf{if} $g(\tau)\leq g(\delta)$ \textbf{then} $\textbf{hi}\gets\tau$}
\STATE{\textbf{else} \textbf{lo}$\gets\tau$}
\ENDWHILE
\OUTPUT $\tau$
\end{algorithmic}
\end{algorithm}
\begin{proposition}
\label{lowerboundofkappa}
Based on Assumption \ref{Bregmanassump}, we have $\kappa\geq\frac{\gamma}{2-\gamma}$, where $\kappa=\frac{L}{\bar{\mu}\mu}$ and $\mu>0$.
\end{proposition}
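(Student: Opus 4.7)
The plan is to squeeze $f(x)-f(x^*)$ from above using $L$-smoothness and from below using the growth condition \eqref{func_value_distance} together with the strong convexity of $h$ in \eqref{st_h}, and then read off the inequality $L \geq \frac{\gamma\bar{\mu}\mu}{2-\gamma}$ by comparing coefficients.

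More concretely, first I would invoke $L$-smoothness of $f$ around the optimum. Since $x^*$ minimizes $f$, we have $\nabla f(x^*) = 0$, so the descent lemma \eqref{lsmooth} applied at $y=x$, $x=x^*$ gives the quadratic upper bound
\begin{equation*}
f(x) - f(x^*) \;\leq\; \tfrac{L}{2}\|x-x^*\|^2
\qquad\text{for all }x\in\mathbb{R}^d.
\end{equation*}
Next, I would chain \eqref{func_value_distance} with \eqref{st_h} to get a matching quadratic lower bound
\begin{equation*}
f(x) - f(x^*) \;\geq\; \tfrac{\gamma\mu}{2-\gamma}\,D_h(x^*,x)
\;\geq\; \tfrac{\gamma\mu\bar{\mu}}{2(2-\gamma)}\,\|x-x^*\|^2.
\end{equation*}

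Combining the two displays and picking any $x\neq x^*$ (so that $\|x-x^*\|^2 > 0$ and we may cancel it), I obtain
\begin{equation*}
\tfrac{\gamma\mu\bar{\mu}}{2-\gamma} \;\leq\; L,
\end{equation*}
which is exactly $\kappa = L/(\bar{\mu}\mu) \geq \gamma/(2-\gamma)$ after dividing by $\bar{\mu}\mu$. The argument is essentially a one-liner: the whole content is that any function that is simultaneously $L$-smooth and has a positive quadratic lower bound at its minimizer must have $L$ at least as large as the lower-bound coefficient. No step here looks delicate; the only minor thing to verify is that $\nabla f(x^*)=0$ is available, which follows because $x^*\in\mathcal{X}^*$ is a minimizer of the differentiable function $f$ (differentiability being built into Assumption~\ref{Bregmanassump} via the hypothesis on each $f_i$). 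Thus there is no genuine obstacle; the proposition is a direct consistency check between the three ingredients of Assumption~\ref{Bregmanassump} and the ambient $L$-smoothness.
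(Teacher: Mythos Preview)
Your proof is correct and essentially identical to the paper's: both sandwich $f(x)-f(x^*)$ between the lower bound \eqref{func_value_distance} and the smoothness upper bound \eqref{lsmooth} (with $\nabla f(x^*)=0$), then use \eqref{st_h} to compare coefficients. The only cosmetic difference is that the paper converts the $\|x-x^*\|^2$ upper bound into a $D_h$ bound before comparing, whereas you convert the $D_h$ lower bound into a $\|x-x^*\|^2$ bound; the conclusion is the same.
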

\begin{proof}
\begin{equation*}
\frac{\gamma\mu}{2-\gamma}D_h(x^*,x)\leq f(x)-f(x^*)\leq\frac{L}{2}\|x^*-x\|^2\leq\frac{L}{\bar{\mu}}D_h(x^*,x)
\end{equation*}
Thus, we have $\frac{\gamma\mu}{2-\gamma}\leq\frac{L}{\bar{\mu}}$. Furthermore, we have $\sqrt{\kappa}\geq\sqrt{\frac{\gamma}{2-\gamma}}\geq\sqrt{\gamma^2}=\gamma$.
\end{proof}

\begin{lemma}
\label{varupperbound}
Suppose Assumption \ref{interpolation} holds, and each $f_i\in\mathcal{F}_L$. If $\nabla_k=\nabla f_i(x_{k+1})-\nabla f_i(y_{k})+\frac{1}{n}\sum_{i=1}^nf_i(y_k)$, we can obtain
\begin{equation}
    \mathbb{E}_i\left[\|\nabla_k-\nabla f(x_{k+1})\|^2\right]\leq 4L\left(f(x_{k+1})-f(x^*)+f(y_k)-f(x^*)\right)
\end{equation}
\end{lemma}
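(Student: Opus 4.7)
The plan is to follow a textbook SVRG-style variance decomposition, using $L$-smoothness plus interpolation to convert gradient norms into function gaps. First I would note that the correction term $\frac{1}{n}\sum_{i=1}^n \nabla f_i(y_k) = \nabla f(y_k)$ is deterministic in $i$, so $\nabla_k$ is unbiased with $\mathbb{E}_i[\nabla_k] = \nabla f(x_{k+1})$. Setting $X_i := \nabla f_i(x_{k+1}) - \nabla f_i(y_k)$, the estimator's noise is $\nabla_k - \nabla f(x_{k+1}) = X_i - \mathbb{E}_i[X_i]$.

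Then I would bound variance by the raw second moment and split with the standard inequality $\|a-b\|^2 \leq 2\|a\|^2 + 2\|b\|^2$:
\[
    \mathbb{E}_i\|\nabla_k - \nabla f(x_{k+1})\|^2 \leq \mathbb{E}_i\|X_i\|^2 \leq 2\mathbb{E}_i\|\nabla f_i(x_{k+1})\|^2 + 2\mathbb{E}_i\|\nabla f_i(y_k)\|^2.
\]
The crucial step is the pointwise inequality $\|\nabla f_i(z)\|^2 \leq 2L\bigl(f_i(z) - f_i(x^*)\bigr)$ for any $z \in \mathbb{R}^d$. This follows by applying the descent bound \eqref{lsmooth} to $f_i$ at the test point $z - \tfrac{1}{L}\nabla f_i(z)$: the right-hand side collapses to $f_i(z) - \tfrac{1}{2L}\|\nabla f_i(z)\|^2$, and Assumption \ref{interpolation} guarantees $f_i(x^*)$ is the global minimum of $f_i$, so this quantity is at least $f_i(x^*)$. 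Rearranging gives the claimed inequality. Averaging over $i$ yields $\mathbb{E}_i\|\nabla f_i(z)\|^2 \leq 2L(f(z) - f(x^*))$ since $\tfrac{1}{n}\sum_i f_i(x^*) = f(x^*)$, and substituting $z = x_{k+1}$ and $z = y_k$ into the earlier display produces exactly the claimed bound $4L\bigl(f(x_{k+1}) - f(x^*) + f(y_k) - f(x^*)\bigr)$.

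The only real subtlety, and the reason this result is non-trivial in the present setting, is justifying $\|\nabla f_i(z)\|^2 \leq 2L(f_i(z) - f_i(x^*))$ without convexity of $f_i$: the usual textbook derivation of this bound relies on convex smoothness to locate the minimizer. Here we bypass convexity entirely, needing only $L$-smoothness together with the fact that $x^*$ is a global minimizer of every summand (i.e., interpolation). This mirrors the strategy highlighted by the authors for Proposition \ref{varianceupperbound}, and is exactly what allows the variance bound to be used throughout a quasar-convex analysis where the individual $f_i$'s may be genuinely non-convex.
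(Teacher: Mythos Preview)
Your proposal is correct and follows essentially the same approach as the paper: bound variance by the raw second moment, split via $\|a-b\|^2\le 2\|a\|^2+2\|b\|^2$, and apply the smoothness-plus-interpolation bound $\|\nabla f_i(z)\|^2\le 2L(f_i(z)-f_i(x^*))$ obtained from the descent lemma at $z-\tfrac{1}{L}\nabla f_i(z)$. The only cosmetic difference is that the paper inserts the pivot $\nabla f(x^*)=0$ before splitting, which is equivalent to your direct split since interpolation forces $\nabla f_i(x^*)=0$.
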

\begin{proof}
Since $f$ is $L$-smooth, for any $x,y\in\mathbb{R}^d$ we have
\begin{equation}
    f(x)\leq f(y)+\langle\nabla f(y),x-y\rangle+\frac{L}{2}\|x-y\|^2\triangleq g(x)\nonumber
\end{equation}
Let $\nabla g(\tilde{x})=0$, and $\tilde{x}=y-\frac{1}{L}\nabla f(y)$ is the minimizer of $g(x)$. And we have
\begin{equation}
    f(x^*)\leq f(x)\leq g(\tilde{x})= f(y)-\frac{1}{2L}\|\nabla f(y)\|^2\Rightarrow \|\nabla f(y)\|^2\leq 2L(f(y)-f(x^*))\nonumber
\end{equation}
Since $\nabla f(x^*)=0$, we have
\begin{equation}
    \|\nabla f(y)-\nabla f(x^*)\|\leq 2L(f(y)-f(x^*))
    \label{varbound}
\end{equation}
By \eqref{varbound}, we can upper bound $\mathbb{E}\left[\|\nabla_k-\nabla f(x_{k+1})\|^2\right]$ as follows
\begin{equation*}
    \begin{aligned}
    \mathbb{E}_i\left[\|\nabla_k-\nabla f(x_{k+1})\|^2\right]&=\mathbb{E}_i\left[\|\nabla f_i(x_{k+1})-\nabla f_i(y_{k})-\mathbb{E}_i[\nabla f_i(x_{k+1})-\nabla f_i(y_{k})]\|^2\right]\\
    &\leq\mathbb{E}_i\left[\|\nabla f_i(x_{k+1})-\nabla f_i(y_{k})\|^2\right]\\
    &=\mathbb{E}_i\left[\|\nabla f_i(x_{k+1})-\nabla f(x^*)+\nabla f(x^*)-\nabla f_i(y_{k})\|^2\right]\\
    &\leq2\mathbb{E}_i\left[\|\nabla f_i(x_{k+1})-\nabla f(x^*)\|^2\right]+2\mathbb{E}_i\left[\|\nabla f(x^*)-\nabla f_i(y_{k})\|^2\right]\\
    &\leq4L\mathbb{E}_i[f_i(x_{k+1})-f_i(x^*)]+4L\mathbb{E}_i[f_i(y_{k})-f_i(x^*)]\\
    &=4L\left(f(x_{k+1})-f(x^*)+f(y_k)-f(x^*)\right),
    \end{aligned}
\end{equation*}
where the first inequality uses the relation $\mathbb{E}\left[\|X-\mathbb{E}[X]\|^2\right]\leq\mathbb{E}\left[\|X\|^2\right]$ for all random variable $X$; the second inequaity uses the relation $\|a+b\|^2\leq 2\|a\|^2+2\|b\|^2$.
\end{proof}
\begin{lemma}
\label{Exptrans}
Let $\{\xi_i\}_{i=1}^n$ be a set of vectors in $\mathbb{R}^d$ and $\bar{\xi}$ denote an average of $\{\xi_i\}_{i=1}^n$. Let I denote a uniform random variable representing a subset of $\{1,2,...,n\}$ with its size equal to $b$. Then, it follows that,
\begin{equation}
    \mathbb{E}_I\left\|\frac{1}{b}\underset{i\in I}{\sum}\xi_i-\bar{\xi}\right\|^2=\frac{n-b}{b(n-1)}\mathbb{E}_i\|\xi_i-\bar{\xi}\|^2.\nonumber
\end{equation}
\end{lemma}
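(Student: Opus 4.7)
The plan is to reduce the statement to the standard variance formula for sampling without replacement by first centering the vectors. Specifically, I would introduce $\tilde{\xi}_i \triangleq \xi_i - \bar{\xi}$, so that $\sum_{i=1}^n \tilde{\xi}_i = 0$ by definition of the average, and observe that
\begin{equation*}
\mathbb{E}_I\left\|\tfrac{1}{b}\sum_{i \in I}\xi_i - \bar{\xi}\right\|^2 \;=\; \frac{1}{b^2}\,\mathbb{E}_I\left\|\sum_{i \in I}\tilde{\xi}_i\right\|^2 \;=\; \frac{1}{b^2}\sum_{i,j=1}^n \tilde{\xi}_i^{\top}\tilde{\xi}_j \,\mathbb{P}(i,j \in I).
\end{equation*}
This reduces everything to computing the two inclusion probabilities under uniform sampling of a size-$b$ subset.

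Next, I would compute those probabilities directly by counting: $\mathbb{P}(i \in I) = b/n$ and, for $i \neq j$, $\mathbb{P}(i,j \in I) = b(b-1)/(n(n-1))$. Substituting splits the double sum into a diagonal piece weighted by $b/n$ and an off-diagonal piece weighted by $b(b-1)/(n(n-1))$.

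The key observation, which is the small amount of algebra that makes the whole thing collapse, is that the total double sum $\sum_{i,j}\tilde{\xi}_i^{\top}\tilde{\xi}_j = \|\sum_i \tilde{\xi}_i\|^2 = 0$. Hence the off-diagonal sum equals $-\sum_i \|\tilde{\xi}_i\|^2$, and the expression collapses to
\begin{equation*}
\frac{1}{b^2}\left(\frac{b}{n} - \frac{b(b-1)}{n(n-1)}\right)\sum_{i=1}^n\|\tilde{\xi}_i\|^2 \;=\; \frac{1}{b^2}\cdot\frac{b(n-b)}{n(n-1)}\sum_{i=1}^n\|\tilde{\xi}_i\|^2,
\end{equation*}
after a one-line simplification of the bracket. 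Recognizing $\frac{1}{n}\sum_i \|\tilde{\xi}_i\|^2 = \mathbb{E}_i\|\xi_i - \bar{\xi}\|^2$ immediately yields the claimed factor $(n-b)/(b(n-1))$.

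There is no genuine obstacle here; the only thing one has to be careful about is handling the indicator-based expansion cleanly and remembering to use the centering identity $\sum_i \tilde{\xi}_i = 0$ (which is precisely what forces the finite-population correction factor to appear). The degenerate case $b = n$ is consistent since the right-hand side vanishes and the left-hand side is identically zero; the case $b = 1$ recovers the usual single-sample variance, which serves as a sanity check.
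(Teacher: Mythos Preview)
Your proposal is correct and is the standard derivation of the finite-population correction factor via inclusion probabilities; the centering trick and the identity $\sum_i \tilde{\xi}_i = 0$ are exactly what make the off-diagonal terms collapse, and your algebra checks out. The paper itself does not give a proof of this lemma at all---it simply defers to the supplementary material of \citet{nitanda2016}---so there is no in-paper argument to compare against, but your self-contained computation is precisely the kind of proof one would expect to find there.
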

We orient our readers to the supplementary materials of \citet{nitanda2016} for proof details of the above lemma.
\section{Proof of Proposition \ref{varianceupperbound}}
\label{proof of Varbound}
Based on Lemma \ref{varupperbound} and Lemma \ref{Exptrans}, we prove \ref{varianceupperbound}.
\begin{proof}
Let $\nabla_k^i=\nabla f_i(x_{k+1})-\nabla f_i(\tilde{x})+\nabla f(\tilde{x})$ and $\nabla_k=\frac{1}{b_k}\underset{i\in I_k}{\sum}\nabla_k^i$. Using Lemma \eqref{Exptrans}, we have
\begin{equation}
\label{varupperboundproof}
    \mathbb{E}_{I_{k}}\|\nabla_k-\nabla f(x_{k+1})\|^2=\frac{1}{b}\frac{n-b_k}{n-1}\mathbb{E}_i\left[\|\nabla_k^i-\nabla f(x_{k+1})\|^2\right]\leq4L\frac{n-b_k}{b_k(n-1)}\left(f(x_{k+1})-f(x^*)+f(\tilde{x})-f(x^*)\right),
\end{equation}
where the inequality follows from Lemma \eqref{varupperbound}.
\end{proof}
\section{Proofs of Convergence Rates}
\label{proofofconvergencerate}
\subsection{Proof of QAGD}
\label{proofQAGD}
\begin{table}[H]
\begin{center}
\renewcommand\arraystretch{1.5}
    \begin{tabular}{|c|}
    \hline
    \rowcolor{black!20}
    \textbf{QAGD}\\
    \hline
    \rowcolor{black!10}
    \textbf{$\gamma$-quasar-convex} $(\mu=0)$\\
    \hline
    $A_k=\frac{\bar{\mu}\gamma^2}{4L}(k+1)^2$, $B_k=1$\\
    $(\alpha_k, \beta_k, \rho_k, \tilde{f},b,c,\tilde{\epsilon})\gets\left(0,\frac{\gamma}{\bar{a}_k},\frac{1}{L}, f, 0, \frac{\gamma A_k}{\bar{a}_k}, \frac{\gamma\epsilon}{2}\right)$\\
    \hline
    \rowcolor{black!10}
    \textbf{$\mu$-strongly $\gamma$-quasar-convex} $(\mu>0)$\\
    \hline
    $A_k=(1+{\gamma}/2{\sqrt{\kappa}})^{k}$, $B_k=\mu A_k$\\
    $(\alpha_k, \beta_k, \rho_k, \tilde{f},b,c,\tilde{\epsilon})\gets\left(\gamma\mu, \frac{\gamma\mu B_k}{\bar{b}_k}, \frac{1}{L}, f, \frac{\gamma\bar{\mu}\mu}{2}, \frac{\gamma A_k}{\bar{a}_k},0\right)$\\
    \hline
    \end{tabular}
    \caption{Parameter choices for QAGD}
    \label{parameterofQAGD}
    \end{center}
\end{table}
we begin with Algorithm \ref{algorithm_for_quasar}, $\nabla_k=\nabla f(x_{k+1})$ and parameters specified in Table \ref{parameterofQAGD} using Lyapunov function \eqref{lyap}:

\textbf{Case 1:} $\mu=0$
For $\nabla_k=\nabla f(x_{k+1})$, we begin with Algorithm \ref{algorithm_for_quasar} using Lyapunov function \eqref{lyap}:
\begin{equation}
    \begin{aligned}
    E_{k+1}-E_k
    &\overset{\eqref{TPI}}{=}-\left\langle{\nabla h(z_{k+1})-\nabla h(z_k)},x^*-z_{k+1}\right\rangle-D_h(z_{k+1},z_k)+A_{k+1}(f(y_{k+1})-f(x^*))-A_k(f(y_k)-f(x^*))\\
    &=\frac{1}{\beta_k}\left\langle\nabla f(x_{k+1}),x^*-z_{k+1}
    \right\rangle-D_h(z_{k+1},z_k)+{A_{k+1}}(f(y_{k+1})-f(x^*))-{A_k}(f(y_k)-f(x^*))\\
    &\overset{\eqref{st_h}}{\leq}\frac{1}{\beta_k}\left\langle\nabla f(x_{k+1}),x^*-z_{k}
    \right\rangle+\frac{1}{\beta_k}\left\langle\nabla f(x_{k+1}),z_k-z_{k+1}
    \right\rangle-\frac{\bar{\mu}}{2}\|z_{k+1}-z_k\|^2\\
    &\quad+{A_{k+1}}(f(y_{k+1})-f(x^*))-{A_k}(f(y_k)-f(x^*))\\
    &\overset{\eqref{fenchelyoung}}{\leq}\frac{1}{\beta_k}\left\langle\nabla f(x_{k+1}),x^*-z_{k}
    \right\rangle+\frac{1}{2\bar{\mu}\beta_k^2}\|\nabla f(x_{k+1})\|^2+{A_{k+1}}(f(y_{k+1})-f(x^*))-{A_k}(f(y_k)-f(x^*))\\
    &=\frac{1}{\beta_k}\langle\nabla f(x_{k+1}),x^*-x_{k+1}\rangle+\frac{\tau_k}{\beta_k}\langle\nabla f(x_{k+1}),y_k-z_k\rangle+\frac{1}{2\bar{\mu}\beta_k^2}\|\nabla f(x_{k+1})\|^2+A_{k+1}(f(y_{k+1})-f(x_{k+1}))\\
    &\quad+(A_{k+1}-A_k)(f(x_{k+1})-f(x^*))+A_k(f(x_{k+1})-f(y_k))\\
    &\leq\frac{\gamma}{\beta_k}(f(x^*)-f(x_{k+1}))+\frac{1}{\beta_k}(c(f(y_k)-f(x_{k+1}))+\tilde{\epsilon})+\frac{1}{2\bar{\mu}\beta_k^2}\|\nabla f(x_{k+1})\|^2+A_{k+1}(f(y_{k+1})-f(x_{k+1}))\\
    &\quad+(A_{k+1}-A_k)(f(x_{k+1})-f(x^*))+A_k(f(x_{k+1})-f(y_k))\nonumber
    \end{aligned}
\end{equation}
The second inequality follows from the Fenchel-Young inequality, and the last inequality follows from the quasar-convexity of $f$ and \eqref{slackEOGa} (Lemma \ref{Existenceofalpha}). With the choice of parameter summarized in Table \ref{parameterofQAGD}, we obtain the following bound:
\begin{equation}
    \begin{aligned}
    E_{k+1}-E_k &\leq\frac{\gamma}{\beta_k}(f(x^*)-f(x_{k+1}))+\frac{1}{\beta_k}(c(f(y_k)-f(x_{k+1}))+\tilde{\epsilon})+\frac{1}{2\bar{\mu}\beta_k^2}\|\nabla f(x_{k+1})\|^2+A_{k+1}(f(y_{k+1})-f(x_{k+1}))\\
    &\quad+(A_{k+1}-A_k)(f(x_{k+1})-f(x^*))+A_k(f(x_{k+1})-f(y_k))\\
    &=\frac{1}{2\bar{\mu}\beta_k^2}\|\nabla f(x_{k+1})\|^2+A_{k+1}(f(y_{k+1})-f(x_{k+1}))+\frac{(A_{k+1}-A_k)}{2}\epsilon\\
    &\overset{\eqref{lsmooth}}{\leq}\left(\frac{1}{2\bar{\mu}\beta_k^2}-\frac{A_{k+1}}{2L}\right)\|\nabla f(x_{k+1})\|^2+\frac{(A_{k+1}-A_k)}{2}\epsilon\nonumber
    \end{aligned}
\end{equation}
The last inequality follows from \eqref{lsmooth}. In fact, \eqref{Lsmoothdescent} is implied by the gradient descent with $\rho_k=1/L$ and $L$-smoothness.
\begin{equation}
\label{Lsmoothdescent}
    f(y_{k+1})-f(x_{k+1})\leq\langle\nabla f(x_{k+1}),y_{k+1}-x_{k+1}\rangle+\frac{L}{2}\|y_{k+1}-x_{k+1}\|^2=-\frac{1}{2L}\|\nabla f(x_{k+1})\|^2
\end{equation}
With the choice of $\beta_k$ and $A_k$, we have
\begin{equation}
    \frac{1}{2\bar{\mu}\beta_k^2}=\frac{\bar{a}_k^2}{2\bar{\mu}\gamma^2}=\frac{\bar{\mu}\gamma^2}{32L^2}(2k+3)^2\leq\frac{\bar{\mu}\gamma^2}{8L^2}(k+2)^2=\frac{A_{k+1}}{2L}\nonumber
\end{equation}
Thus, we obtain the final bound:
\begin{equation}
\begin{aligned}
    E_{k+1}-E_k&\leq\left(\frac{1}{2\bar{\mu}\beta_k^2}-\frac{A_{k+1}}{2L}\right)\|\nabla f(x_{k+1})\|^2+\frac{(A_{k+1}-A_k)}{2}\epsilon\leq\frac{(A_{k+1}-A_k)}{2}\epsilon\nonumber
\end{aligned}
\end{equation}
By summing both sides of the inequality above, we obtain
\begin{equation*}
\begin{aligned}
    f(y_t)-f(x^*)&\leq A_t^{-1}\left(A_0(f(y_0)-f(x^*))+D_h(x^*,z_0)\right)+\frac{\epsilon}{2}\\
    &\overset{\eqref{lsmooth}}{\leq}A_t^{-1}\left(\frac{\bar{\mu}\gamma^2}{8}\|y_0-x^*\|^2+D_h(x^*,z_0)\right)+\frac{\epsilon}{2}\\
    &\overset{\eqref{st_h}}{\leq}A_t^{-1}\left(\left(\frac{\gamma^2}{4}+1\right)D_h(x^*,z_0)\right)+\frac{\epsilon}{2}\\
    &\leq\frac{8LD_h(x^*,z_0)}{\gamma^2\bar{\mu}(t+1)^2}+\frac{\epsilon}{2}\leq \frac{8LR^2}{\gamma^2\bar{\mu}(t+1)^2}+\frac{\epsilon}{2}
\end{aligned}
\end{equation*}
\textbf{Case 2:} $\mu>0$
\begin{equation*}
    \begin{aligned}
    E_{k+1}-E_k &\overset{\eqref{TPI}}{=} \bar{b}_k D_h(x^*,z_{k+1})-B_k\langle\nabla h(z_{k+1})-\nabla h(z_k),x^*-z_{k+1}\rangle-B_k D_h(z_{k+1},z_k)\\
    &\quad + A_{k+1}(f(y_{k+1})-f(x^*))-A_k(f(y_{k})-f(x^*))\\
    &=\bar{b}_k D_h(x^*,z_{k+1})+\frac{\alpha_k B_k}{\beta_k}\langle\nabla h(z_{k+1})-\nabla h(x_{k+1}), x^*-z_{k+1}\rangle+\frac{B_k}{\beta_k}\langle\nabla f(x_{k+1}), x^*-z_{k+1}\rangle\\
    &\quad-B_k D_h(z_{k+1},z_k)+A_{k+1}(f(y_{k+1})-f(x^*))-A_k(f(y_{k})-f(x^*))\\
    &\overset{\eqref{TPI}}{=} \left(\bar{b}_k-\frac{\alpha_k B_k}{\beta_k}\right)D_h(x^*,z_{k+1})+\frac{\alpha_k B_k}{\beta_k}\left(D_h(x^*,x_{k+1})-D_h(z_{k+1},x_{k+1})\right)+\frac{B_k}{\beta_k}\langle\nabla f(x_{k+1}),x^*-z_k\rangle\\
    &\quad+\frac{B_k}{\beta_k}\langle\nabla f(x_{k+1}),z_k-z_{k+1}\rangle-B_k D_h(z_{k+1},z_k)+A_{k+1}(f(y_{k+1})-f(x^*))-A_k(f(y_{k})-f(x^*))\\
    &\overset{\eqref{st_h}}{\leq}\frac{\alpha_k B_k}{\beta_k}D_h(x^*,x_{k+1})-\frac{\bar{\mu}\alpha_k B_k}{2\beta_k}\|z_{k+1}-x_{k+1}\|^2+\frac{B_k}{\beta_k}\langle\nabla f(x_{k+1}),x^*-z_k\rangle\\
    &\quad+\frac{B_k}{\beta_k}\langle\nabla f(x_{k+1}),z_k-z_{k+1}\rangle-\frac{\bar{\mu}B_k}{2}\|z_{k+1}-z_k\|^2+A_{k+1}(f(y_{k+1})-f(x^*))-A_k(f(y_{k})-f(x^*))\\
    &\leq \frac{\alpha_k B_k}{\beta_k}D_h(x^*,x_{k+1})-\frac{\bar{\mu}\alpha_k B_k}{2\beta_k}\|z_{k+1}-x_{k+1}\|^2+\frac{B_k}{\beta_k}\langle\nabla f(x_{k+1}),x^*-x_{k+1}\rangle\\
    &\quad+\frac{B_k}{\beta_k}\left(c(f(y_k)-f(x_{k+1}))+b\|x_{k+1}-z_k\|^2\right)+\frac{B_k}{\beta_k}\langle\nabla f(x_{k+1}),z_k-z_{k+1}\rangle-\frac{\bar{\mu}B_k}{2}\|z_{k+1}-z_k\|^2\\
    &\quad+A_{k+1}(f(y_{k+1})-f(x^*))-A_k(f(y_{k})-f(x^*))\\
    &\leq \frac{\alpha_kB_k}{\beta_k}D_h(x^*,x_{k+1})+\frac{B_k}{\beta_k}\langle\nabla f(x_{k+1}),z_k-z_{k+1}\rangle+\left(\frac{\bar{\mu}\alpha_kB_k}{2\beta_k}-\frac{\bar{\mu}B_k}{2}\right)\|z_{k+1}-z_k\|^2\\
    &\quad+\frac{B_k}{\beta_k}\langle\nabla f(x_{k+1}),x^*-x_{k+1}\rangle+A_k(f(y_k)-f(x_{k+1}))+A_{k+1}(f(y_{k+1})-f(x^*))-A_k(f(y_{k})-f(x^*))
    \end{aligned}
\end{equation*}
The first equality and the third equality follows from Lemma \ref{Three-point identity}; the second equality follows from mirror descent.
The first inequality follows from the strong convexity of $h$, and the second inequality follows from \eqref{slackEOGa}
(Lemma \ref{Existenceofalpha}). With the choice of $\alpha_k$ and $\beta_k$, we have ${\alpha_kB_k}/{\beta_k}=\bar{b}_k$, which explains the first inequality. Moreover, with the choice of $B_k$ and Observation \ref{lowerboundofkappa}, we have
\begin{equation*}
    \frac{\alpha_k}{\beta_k}=\frac{\bar{b}_k}{B_k}=\frac{\gamma}{2\sqrt{\kappa}}\leq\frac{\gamma}{2}\sqrt{\frac{2-\gamma}{\gamma}}\leq\frac{\gamma}{2}\sqrt{\frac{1}{\gamma^2}}=\frac{1}{2}.
\end{equation*}
Combined with the initial bound and the relation above, we obtain the following bound:
\begin{equation*}
    \begin{aligned}
    E_{k+1}-E_k&\leq\frac{\alpha_kB_k}{\beta_k}D_h(x^*,x_{k+1})+\frac{B_k}{\beta_k}\langle\nabla f(x_{k+1}),z_k-z_{k+1}\rangle+\left(\frac{\bar{\mu}\alpha_kB_k}{2\beta_k}-\frac{\bar{\mu}B_k}{2}\right)\|z_{k+1}-z_k\|^2\\
    &\quad+\frac{B_k}{\beta_k}\langle\nabla f(x_{k+1}),x^*-x_{k+1}\rangle+A_k(f(y_k)-f(x_{k+1}))+A_{k+1}(f(y_{k+1})-f(x^*))-A_k(f(y_{k})-f(x^*))\\
    &\leq\frac{\alpha_kB_k}{\beta_k}D_h(x^*,x_{k+1})+\frac{B_k}{\beta_k}\langle\nabla f(x_{k+1}),z_k-z_{k+1}\rangle-\frac{\bar{\mu}B_k}{4}\|z_{k+1}-z_k\|^2+\frac{B_k}{\beta_k}\langle\nabla f(x_{k+1}),x^*-x_{k+1}\rangle\\
    &\quad+A_{k+1}(f(y_{k+1})-f(x_{k+1}))+\bar{a}_k(f(x_{k+1})-f(x^*))\\
    &\overset{\eqref{fenchelyoung}\eqref{uniform-quasar-convex-of-f}}{\leq}\frac{\alpha_kB_k}{\beta_k}D_h(x^*,x_{k+1})+\frac{B_k}{\bar{\mu}\beta_k^2}\|\nabla f(x_{k+1})\|^2+\frac{\gamma B_k}{\beta_k}(f(x^*)-f(x_{k+1})-\mu D_h(x^*,x_{k+1}))\\
    &\quad+A_{k+1}(f(y_{k+1})-f(x_{k+1}))+\bar{a}_k(f(x_{k+1})-f(x^*))\\
    &\leq\frac{B_k}{\bar{\mu}\beta_k^2}\|\nabla f(x_{k+1})\|^2+A_{k+1}(f(y_{k+1})-f(x_{k+1}))\\
    &\overset{\eqref{Lsmoothdescent}}{\leq}\left(\frac{B_k}{\bar{\mu}\beta_k^2}-\frac{A_{k+1}}{2L}\right)\|\nabla f(x_{k+1})\|^2\\
    &\leq 0
    \end{aligned}
\end{equation*}
The third inequality follows from the Fenchel-Young inequality of $h$ and the strong quasar-convexity of $f$. The fifth inequality follows from $L$-smoothness of $f$. With the choice of $B_k,\beta_k$ and $A_k$, we have
\begin{equation*}
    \frac{B_k}{\bar{\mu}\beta_k^2}=\frac{(\gamma/2\sqrt{\kappa})^2A_k}{\gamma^2\bar{\mu}\mu}=\frac{A_k}{4L}\leq\frac{A_{k+1}}{2L},
\end{equation*}
which explains the last inequality. Therefore, we obtain
\begin{equation*}
    f(y_t)-f(x^*)\leq \left(1+\frac{\gamma}{2\sqrt{\kappa}}\right)^{-t}(f(y_0)-f(x^*)+\mu D_h(x^*,z_0))=\left(1+\frac{\gamma}{2\sqrt{\kappa}}\right)^{-t}E_0
\end{equation*}
\subsection{Proof of Theorem \ref{convergencerateASGD}}
\label{proofQASGD}
\begin{table}[H]
\begin{center}
\renewcommand\arraystretch{1.5}
    \begin{tabular}{|c|}
    \hline
    \rowcolor{black!20}
    \textbf{QASGD}\\
    \hline
    \rowcolor{black!10}
    \textbf{$\gamma$-quasar-convex} $(\mu=0)$\\
    \hline
    $A_k=\eta(k+1)^2$, $\eta=\min\left(\frac{\bar{\mu}}{L},\sqrt{\frac{4D_h(x^*,z_0)}{\sigma^2}}\frac{\gamma}{(t+1)^{3/2}}\right)$,
    $B_k=1$\\
    $(\alpha_k, \beta_k, \rho_k, \tilde{f},b,c,\tilde{\epsilon})\gets\left(0,\frac{\gamma}{\bar{a}_k}, 0, f_i, 0, \frac{\gamma A_k}{\bar{a}_k}, \frac{\gamma\epsilon}{2}\right)$\\
    \hline
    \rowcolor{black!10}
    \textbf{$\mu$-strongly $\gamma$-quasar-convex} $(\mu>0)$\\
    \hline
    \textbf{Step 1}: Choose $A_k,B_k,\theta_k$ as follows until convergence\\
    \hline
    $A_k=\left(1+\min\left\{{\gamma^2\bar{\mu}^2}/{16},{1}/{2}\right\}\right)^{k}$, $B_k=\mu A_k$\\
    $(\alpha_k, \beta_k, \rho_k, \tilde{f},b,c,\tilde{\epsilon})\gets\left(\frac{\gamma\mu}{2}, \frac{\gamma\mu B_k}{2\bar{b}_k}, 0, f_i, \frac{\gamma\bar{\mu}\mu}{4}, \frac{\gamma A_k}{2\bar{a}_k},0\right)$\\
    \hline
    \textbf{Step 2}: Restart and choose $A_k,B_k,\theta_k$ as follows\\
    \hline
    $A_k=\frac{\gamma^2\bar{\mu}^2}{36}\left(k+\max\{{48}/{\gamma^2\bar{\mu}^2},5\}\right)^2$, $B_k=\mu A_k$\\
    $(\alpha_k, \beta_k, \rho_k, \tilde{f},b,c,\tilde{\epsilon})\gets\left(\frac{\gamma\mu}{2}, \frac{\gamma\mu B_k}{2\bar{b}_k}, 0, f_i, \frac{\gamma\bar{\mu}\mu}{4}, \frac{\gamma A_k}{2\bar{a}_k},0\right)$\\
    \hline
    \end{tabular}
    \caption{Parameter choices for QASGD}
    \label{parameterofQASGD}
    \end{center}
\end{table}
Note that $y_k$ is identical to $x_k$ since $\rho_k=0$. Thus we can substitute $y_k$ in Lyapunov functions \eqref{lyap} with $x_k$. 
We begin with Algorithm \ref{algorithm_for_quasar}, $\nabla_k=\nabla f_i(x_{k+1})$ and parameters specified in Table \ref{parameterofQASGD} using Lyapunov function \eqref{lyap}:

\textbf{Case 1:} $\mu=0$
\begin{equation}
    \begin{aligned}
     E_{k+1}-E_k&=-\left\langle\nabla h(z_{k+1})-\nabla h(z_k),x^*-z_{k+1}\right\rangle-D_h(z_{k+1},z_k)+A_{k+1}(f(x_{k+1})-f(x^*))-A_k(f(x_{k})-f(x^*))\\
     &=\frac{1}{\beta_k}\left\langle\nabla_k,x^*-z_{k+1}\right\rangle-D_h(z_{k+1},z_k)+A_{k+1}(f(x_{k+1})-f(x^*))-A_k(f(x_{k})-f(x^*))\\
     &\overset{\eqref{st_h}}{\leq}\frac{1}{\beta_k}\left\langle\nabla_k,x^*-z_k\right\rangle+\frac{1}{\beta_k}\left\langle\nabla_k,z_k-z_{k+1}\right\rangle-\frac{\bar{\mu}}{2}\|z_{k+1}-z_k\|^2+A_{k+1}(f(x_{k+1})-f(x^*))\\
     &\quad-A_k(f(x_{k})-f(x^*))\\
     &\leq\frac{1}{\beta_k}\left\langle\nabla_k,x^*-z_k\right\rangle+\frac{1}{2\bar{\mu}\beta_k^2}\|\nabla_k\|^2+A_{k+1}(f(x_{k+1})-f(x^*))-A_k(f(x_{k})-f(x^*))\\
     &=\frac{1}{\beta_k}\left\langle\nabla_k,x^*-x_{k+1}\right\rangle+\frac{\tau_k}{\beta_k}\left\langle\nabla_k,x_k-z_k\right\rangle+\frac{1}{2\bar{\mu}\beta_k^2}\|\nabla_k\|^2+\mathcal{A}_k\\
     &\leq\frac{1}{\beta_k}\left\langle\nabla_k,x^*-x_{k+1}\right\rangle+\frac{1}{\beta_k}(c(f_{i}(x_k)-f_{i}(x_{k+1}))+\tilde{\epsilon})+\frac{1}{2\bar{\mu}\beta_k^2}\|\nabla_k\|^2+\mathcal{A}_k\nonumber
    \end{aligned}
\end{equation}
where $\mathcal{A}_k\triangleq \bar{a}_k(f(x_{k+1})-f(x^*))+A_k(f(x_{k+1})-f(x_k))$. The second equality follows from mirror descent; the first inequality follows from the strong convexity of $h$; the second inequality follows from the Fenchel-Young inequality, and the last inequality follows from \eqref{slackEOGa} (Lemma \ref{Existenceofalpha}). We take the expectation of both sides of the initial bound and obtain
\begin{equation*}
    \begin{aligned}
    \mathbb{E}[E_{k+1}-E_k]&\leq \frac{1}{\beta_k}\left\langle\nabla f(x_{k+1}),x^*-x_{k+1}\right\rangle+\frac{c}{\beta_k}(f(x_k)-f(x_{k+1}))+\frac{1}{2\bar{\mu}\beta_k^2}\mathbb{E}\left[\|\nabla_k\|^2\right]+\frac{\bar{a}_k}{2}\epsilon\\
     &\quad+\bar{a}_k(f(x_{k+1})-f(x^*))+A_k(f(x_{k+1})-f(x_k))\\
     &\leq\frac{\gamma}{\beta_k}(f(x^*)-f(x_{k+1}))+A_k(f(x_k)-f(x_{k+1}))+\frac{1}{2\bar{\mu}\beta_k^2}\mathbb{E}\left[\|\nabla_k\|^2\right]+\frac{\bar{a}_k}{2}\epsilon\\
     &\quad+\bar{a}_k(f(x_{k+1})-f(x^*))+A_k(f(x_{k+1})-f(x_k))\\
     &\leq\frac{1}{2\bar{\mu}\beta_k^2}\mathbb{E}\left[\|\nabla_k\|^2\right]+\frac{\bar{a}_k}{2}\epsilon
    \end{aligned}
\end{equation*}
By summing both sides of the inequality above, we obtain
\begin{equation*}
\begin{aligned}
    \mathbb{E}[f(x_t)-f(x^*)]&\leq A_t^{-1}\left(A_0(f(x_0)-f(x^*))+D_h(x^*,z_0)+\sum_{k=0}^{t-1}\frac{1}{2\bar{\mu}\beta_k^2}\mathbb{E}\left[\|\nabla_k\|^2\right]\right)+\frac{\epsilon}{2}\\
    &\leq A_t^{-1}\left(2D_h(x^*,z_0)+\sum_{k=0}^{t-1}\frac{1}{2\bar{\mu}\beta_k^2}\mathbb{E}\left[\|\nabla_k\|^2\right]\right)+\frac{\epsilon}{2}\\
    &\leq\frac{2D_h(x^*,z_0)}{\eta(t+1)^2}+\frac{\sigma^2\eta}{\gamma^2\bar{\mu}}(t+1)+\frac{\epsilon}{2}\\
    &\leq\frac{2LD_h(x^*,z_0)}{\bar{\mu}(t+1)^2}+\frac{2\sigma}{\gamma\bar{\mu}}\sqrt{\frac{D_h(x^*,z_0)}{t+1}}+\frac{\epsilon}{2}\leq\frac{2LR^2}{\bar{\mu}(t+1)^2}+\frac{2\sigma R}{\gamma\bar{\mu}\sqrt{t+1}}+\frac{\epsilon}{2}
\end{aligned}
\end{equation*}
\textbf{Case 2:} $\mu>0$
\begin{equation*}
    \begin{aligned}
    E_{k+1}-E_k &= \bar{b}_k D_h(x^*,z_{k+1})-B_k\langle\nabla h(z_{k+1})-\nabla h(z_k),x^*-z_{k+1}\rangle-B_k D_h(z_{k+1},z_k)\\
    &\quad + \bar{a}_k(f(x_{k+1})-f(x^*))+A_k(f(x_{k+1})-f(x_k))\\
    &=\bar{b}_k D_h(x^*,z_{k+1})+\frac{\alpha_k B_k}{\beta_k}\langle\nabla h(z_{k+1})-\nabla h(x_{k+1}), x^*-z_{k+1}\rangle+\frac{B_k}{\beta_k}\langle\nabla_k, x^*-z_{k+1}\rangle\\
    &\quad-B_k D_h(z_{k+1},z_k)+\bar{a}_k(f(x_{k+1})-f(x^*))+A_k(f(x_{k+1})-f(x_k))\\
    &\overset{\eqref{TPI}}{=} \left(\bar{b}_k-\frac{\alpha_k B_k}{\beta_k}\right)D_h(x^*,z_{k+1})+\frac{\alpha_k B_k}{\beta_k}\left(D_h(x^*,x_{k+1})-D_h(z_{k+1},x_{k+1})\right)+\frac{B_k}{\beta_k}\langle\nabla_k,x^*-z_k\rangle\\
    &\quad+\frac{B_k}{\beta_k}\langle\nabla_k,z_k-z_{k+1}\rangle-B_k D_h(z_{k+1},z_k)+\bar{a}_k(f(x_{k+1})-f(x^*))+A_k(f(x_{k+1})-f(x_k))
    \end{aligned}
\end{equation*}
The second equality follows from mirror descent. With the choice of $\beta_k,\alpha_k$ and Observation \ref{lowerboundofkappa}, we have $\bar{b}_k=\alpha_kB_k/\beta_k$ and $\alpha_k/\beta_k\leq 1/2$. Therefore, We obtain the following bound:
\begin{equation*}
    \begin{aligned}
    E_{k+1}-E_k&\leq\frac{\alpha_k B_k}{\beta_k}D_h(x^*,x_{k+1})-\frac{\bar{\mu}\alpha_k B_k}{2\beta_k}\|z_{k+1}-x_{k+1}\|^2+\frac{B_k}{\beta_k}\langle\nabla_k,x^*-x_{k+1}\rangle+\frac{\tau_kB_k}{\beta_k}\langle\nabla_k,x_k-z_k\rangle\\
    &\quad+\frac{B_k}{\beta_k}\langle\nabla_k,z_k-z_{k+1}\rangle-\frac{\bar{\mu}B_k}{2}\|z_{k+1}-z_k\|^2+\mathcal{A}_k\\
    &\leq\frac{\alpha_k B_k}{\beta_k}D_h(x^*,x_{k+1})-\frac{\bar{\mu}\alpha_k B_k}{2\beta_k}\|z_{k+1}-x_{k+1}\|^2+\frac{B_k}{\beta_k}\langle\nabla_k,x^*-x_{k+1}\rangle+\frac{B_k}{\beta_k}(c(f_i(x_k)-f_i(x_{k+1}))\\
    &\quad+b\|z_k-x_{k+1}\|^2)+\frac{B_k}{\beta_k}\langle\nabla_k,z_k-z_{k+1}\rangle-\frac{\bar{\mu}B_k}{2}\|z_{k+1}-z_k\|^2+\mathcal{A}_k\\
    &\leq\frac{\alpha_k B_k}{\beta_k}D_h(x^*,x_{k+1})-\frac{\bar{\mu}B_k}{4}\|z_{k+1}-z_k\|^2+\frac{B_k}{\beta_k}\langle\nabla_k,z_k-z_{k+1}\rangle+\frac{B_k}{\beta_k}\langle\nabla_k,x^*-x_{k+1}\rangle\\
    &\quad+A_k(f_i(x_k)-f_i(x_{k+1}))+\mathcal{A}_k\\
    &\leq\frac{\alpha_k B_k}{\beta_k}D_h(x^*,x_{k+1})+\frac{B_k}{\bar{\mu}\beta_k^2}\|\nabla_k\|^2+\frac{B_k}{\beta_k}\langle\nabla_k,x^*-x_{k+1}\rangle+\mathcal{A}_k+A_k(f_i(x_k)-f_i(x_{k+1}))
    \end{aligned}
\end{equation*}
where $\mathcal{A}_k\triangleq \bar{a}_k(f(x_{k+1})-f(x^*))+A_k(f(x_{k+1})-f(x_k))$. The second inequality follows from \eqref{slackEOGa} (Lemma \ref{Existenceofalpha}); the second inequality follows from the triangle inequality and the last inequality follows from the Fenchel-Young inequality. We take the expectation of both sides of the inequality above and obtain
\begin{equation*}
    \begin{aligned}
    \mathbb{E}[E_{k+1}-E_k]&\leq\frac{\alpha_k B_k}{\beta_k}D_h(x^*,x_{k+1})+\frac{B_k}{\bar{\mu}\beta_k^2}\mathbb{E}\left[\|\nabla_k\|^2\right]+\frac{B_k}{\beta_k}\langle\nabla f(x_{k+1}),x^*-x_{k+1}\rangle+\bar{a}_k(f(x_{k+1})-f(x^*))\\
    &\leq\frac{\alpha_k B_k}{\beta_k}D_h(x^*,x_{k+1})+\frac{B_k}{\bar{\mu}\beta_k^2}\mathbb{E}\left[\|\nabla_k\|^2\right]+\frac{\gamma B_k}{\beta_k}(f(x^*)-f(x_{k+1})-\mu D_h(x^*,x_{k+1}))\\
    &\quad+\bar{a}_k(f(x_{k+1})-f(x^*))\\
    &\leq-\frac{\gamma\mu B_k}{2\beta_k}D_h(x^*,x_{k+1})+\frac{2\mu^2B_k}{\bar{\mu}\beta_k^2}\|x^*-x_{k+1}\|^2+\frac{\sigma^2B_k}{\bar{\mu}\beta_k^2}\\
    &\leq\left(\frac{4\mu^2B_k}{\bar{\mu}^2\beta_k^2}-\frac{\gamma\mu B_k}{2\beta_k}\right)D_h(x^*,x_{k+1})+\frac{\sigma^2B_k}{\bar{\mu}\beta_k^2}
    \end{aligned}
\end{equation*}
Next We prove that $4\mu^2B_k/\bar{\mu}^2\beta_k^2\leq\gamma\mu B_k/2\beta_k$. It suffices to prove that $4\mu/\bar{\mu}^2\beta_k\leq\gamma/2$.
\begin{equation*}
    \frac{4\mu}{\bar{\mu}^2\beta_k}=\frac{8\bar{a}_k}{\gamma\bar{\mu}^2A_k}\leq\frac{8}{\gamma\bar{\mu}^2}\frac{\gamma^2\bar{\mu}^2}{16}=\frac{\gamma}{2}
\end{equation*}
Thus we obtain the final bound:
\begin{equation*}
    \mathbb{E}[E_{k+1}-E_k]\leq\frac{\sigma^2B_k}{\bar{\mu}\beta_k^2}
\end{equation*}
By summing both sides of the inequality above and using the notation $\mathcal{q}\triangleq\min\{\gamma^2\bar{\mu}^2/16,1/2\}$, we obtain
\begin{equation*}
    \begin{aligned}
    \mathbb{E}[f(x_t)-f(x^*)]&\leq A_t^{-1}\left(f(x_0)-f(x^*)+\mu D_h(x^*,z_0)+\sum_{k=0}^{t-1}\frac{\sigma^2B_k}{\bar{\mu}\beta_k^2}\right)\\
    &\leq A_t^{-1}\left(f(x_0)-f(x^*)+\mu D_h(x^*,z_0)+\sum_{k=0}^{t-1}\frac{4\sigma^2(A_{k+1}-A_k)^2}{\gamma^2\bar{\mu}\mu A_k}\right)\\
    &\leq A_t^{-1}\left(f(x_0)-f(x^*)+\mu D_h(x^*,z_0)+\sum_{k=0}^{t-1}\frac{4\sigma^2q^2A_k}{\gamma^2\bar{\mu}\mu}\right)\\
    &\leq A_t^{-1}\left(f(x_0)-f(x^*)+\mu D_h(x^*,z_0)+\frac{4\sigma^2qA_t}{\gamma^2\bar{\mu}\mu}\right)\\
    &=(1+\mathcal{q})^{-t}E_0+\frac{\bar{\mu}\sigma^2}{4\mu}
    \end{aligned}
\end{equation*}
In Step 1, the algorithm is run until convergence and we let $x_0$ be the last iterate of the Step 1. Assume $\mathbb{E}[f(x_0)-f(x^*)+\mu D_h(x^*,z_0)]\leq\frac{\bar{\mu}\sigma^2}{4\mu}$, and we restart the algorithm using the parameters in Step 2 and the notation $m=\max\left\{\frac{48}{\gamma^2\bar{\mu}^2},5\right\}$. Then We obtain
\begin{equation*}
    \begin{aligned}
    \mathbb{E}[f(x_t)-f(x^*)]&\leq A_t^{-1}\left(f(x_0)-f(x^*)+\mu D_h(x^*,z_0)+\sum_{k=0}^{t-1}\frac{4\sigma^2(A_{k+1}-A_k)^2}{\gamma^2\bar{\mu}\mu A_k}\right)\\
    &\leq A_t^{-1}\left(\frac{\bar{\mu}\sigma^2}{4\mu}+\sum_{k=0}^{t-1}\frac{\bar{\mu}\sigma^2(2k+2m+1)^2}{9\gamma^2\mu (k+m)^2}\right)\\
    &\leq A_t^{-1}\left(\frac{\bar{\mu}\sigma^2}{4\mu}+\frac{\bar{\mu}\sigma^2t}{\gamma^2\mu}\right)\\
    &\leq \frac{9\sigma^2}{\gamma^2\bar{\mu}\mu(t+m)^2}+\frac{36\sigma^2}{\gamma^2\bar{\mu}\mu(t+m)}
    \end{aligned}
\end{equation*}
Additionally, we need to verify that the parameters in Step 2 satisfy two essential relations, $\alpha_k/\beta_k\leq 1/2$ and $4\mu/\bar{\mu}^2\beta_k^2\leq \gamma/2$, which are key to obtaining the final bound of $E_{k+1}-E_k$.
\begin{equation*}
    \frac{\alpha_k}{\beta_k}=\frac{A_{k+1}-A_k}{A_k}=\frac{2k+2m+1}{(k+m)^2}\leq\frac{2m+1}{m^2}\leq\frac{11}{25}\leq\frac{1}{2}
\end{equation*}
\begin{equation*}
    \frac{4\mu}{\bar{\mu}^2\beta_k^2}\leq\frac{8(A_{k+1}-A_k)}{\gamma\bar{\mu}^2A_k}\leq\frac{8}{\gamma\bar{\mu}^2}\frac{2m+1}{m^2}\leq\frac{8}{\gamma\bar{\mu}^2}\frac{3}{m}\leq\frac{8}{\gamma\bar{\mu}^2}\frac{\gamma^2\bar{\mu}^2}{16}=\frac{\gamma}{2}
\end{equation*}
\subsection{Proof of Theorem \ref{convergencerateQASVRGsingle}}
\label{proofQASVRG}
\begin{table}[H]
\begin{center}
\renewcommand\arraystretch{1.5}
    \begin{tabular}{|c|}
    \hline
    \rowcolor{black!20}
    \textbf{QASVRG}\\
    \hline
    \rowcolor{black!10}
    \textbf{$\gamma$-quasar-convex} $(\mu=0)$\\
    \hline
    $A_k=\frac{\gamma^2\bar{\mu}}{16L}(k+1)^2$, $B_k=1$\\
    Batchsize $b_k=\left\lceil\frac{\gamma\bar{\mu}n(2k+3)}{2(n-1)p+\gamma\bar{\mu}(2k+3)}\right\rceil,p\leq\frac{\gamma\bar{\mu}}{16}$\\
    $(\alpha_k, \beta_k, \rho_k, \tilde{f},b,c,\tilde{\epsilon})\gets\left(0,\frac{\gamma}{2\bar{a}_k},\frac{1}{L}, f_{I_k}, 0, \frac{\gamma A_k}{2\bar{a}_k}, \frac{\gamma\epsilon f(y_0)}{2}\right)$\\
    \hline
    \rowcolor{black!10}
    \textbf{$\mu$-strongly $\gamma$-quasar-convex} $(\mu>0)$\\
    \hline
    \hypertarget{option1}{\textbf{Option} \rom{1}}\\
    \hline
    $A_k=(1+{\gamma}/{\sqrt{8\kappa}})^{k}$, $B_k=\mu A_k$\\
    Batchsize $b_k=\left\lceil\frac{8n(\sqrt{8\kappa}+\gamma)}{\gamma(n-1)+8(\sqrt{8\kappa}+\gamma)}\right\rceil$\\
    $(\alpha_k, \beta_k, \rho_k, \tilde{f},b,c,\tilde{\epsilon})\gets\left(\frac{\gamma\mu}{2}, \frac{\gamma\mu B_k}{2\bar{b}_k}, \frac{1}{L}, f_{I_k}, \frac{\gamma\bar{\mu}\mu}{4}, \frac{\gamma A_k}{2\bar{a}_k},0\right)$\\
    \hline
    \hypertarget{option2}{\textbf{Option} \rom{2}}\\
    \hline
    $A_k=\frac{\gamma^2\bar{\mu}}{16L}(k+1)^2$, $B_k=1$\\
    Batchsize $b_k=\left\lceil\frac{\gamma\bar{\mu}n(2k+3)}{2(n-1)p+\gamma\bar{\mu}(2k+3)}\right\rceil, p\leq\frac{\gamma\bar{\mu}}{16}$\\
    $(\alpha_k, \beta_k, \rho_k, \tilde{f},b,c,\tilde{\epsilon})\gets\left(0,\frac{\gamma}{2\bar{a}_k},\frac{1}{L}, f_{I_k}, 0, \frac{\gamma A_k}{2\bar{a}_k}, \frac{\gamma\epsilon f(y_0)}{2}\right)$\\
    \hline
    \end{tabular}
    \caption{Parameter choices for QASVRG}
    \label{parameterofQASVRG}
    \end{center}
\end{table}
We begin with Algorithm \ref{algorithm_for_quasar}, $\nabla_k=\nabla f_{I_k}(x_{k+1})-\nabla f_{I_k}(y_0)+\nabla f(y_0)$ and parameters specified in Table \ref{parameterofQASVRG} using Lyapunov function \eqref{lyap}:

\textbf{case 1:} $\mu=0$
\begin{equation}
    \begin{aligned}
     E_{k+1}-E_k&=-\left\langle\nabla h(z_{k+1})-\nabla h(z_k),x^*-z_{k+1}\right\rangle-D_h(z_{k+1},z_k)+A_{k+1}(f(y_{k+1})-f(x^*))-A_k(f(y_{k})-f(x^*))\\
     &=\frac{1}{\beta_k}\left\langle\nabla_k,x^*-z_{k+1}\right\rangle-D_h(z_{k+1},z_k)+A_{k+1}(f(y_{k+1})-f(x^*))-A_k(f(y_{k})-f(x^*))\\
     &\leq\frac{1}{\beta_k}\left\langle\nabla_k,x^*-z_k\right\rangle+\frac{1}{2\bar{\mu}\beta_k^2}\|\nabla_k\|^2+A_{k+1}(f(y_{k+1})-f(x^*))-A_k(f(y_{k})-f(x^*))\\
     &=\frac{1}{\beta_k}\left\langle\nabla_k,x^*-x_{k+1}\right\rangle+\frac{\tau_k}{\beta_k}\left\langle\nabla_k,y_k-z_k\right\rangle+\frac{1}{2\bar{\mu}\beta_k^2}\|\nabla_k\|^2+A_{k+1}(f(y_{k+1})-f(x_{k+1}))\\
     &\quad+(A_{k+1}-A_k)(f(x_{k+1})-f(x^*))+A_k(f(x_{k+1})-f(y_k))\\
     &\leq\frac{1}{\beta_k}\left\langle\nabla_k,x^*-x_{k+1}\right\rangle+\frac{1}{\beta_k}\left(c(f_{I_k}(y_k)-f_{I_k}(x_{k+1}))+\tilde{\epsilon}\right)+\frac{1}{2\bar{\mu}\beta_k^2}\|\nabla_k\|^2+A_{k+1}(f(y_{k+1})-f(x_{k+1}))\\
     &\quad +(A_{k+1}-A_k)(f(x_{k+1})-f(x^*))+A_k(f(x_{k+1})-f(y_k))+\frac{\tau_k}{\beta_k}\left\langle\nabla f(y_0)-\nabla f_{I_k}(y_0),y_k-z_k\right\rangle\nonumber
    \end{aligned}
\end{equation}
The first equality follows from the mirror descent; the third equality follows from the momentum step; the first inequality follows from the Fenchel-Young inequality, and the last inequality follows from \eqref{slackEOGa} (Lemma \ref{Existenceofalpha}). Then we take the expectation with respect to the history of random variable $I_{i_j}$ with its size equal to $b_i$, where $i=0,1,...,t-1$ and $j=1,2,...,\tbinom{n}{b_i}$, and we obtain
\begin{equation}
    \begin{aligned}
    \mathbb{E}[E_{k+1}-E_k]
    &\leq \frac{1}{\beta_k}\left\langle\nabla f(x_{k+1}),x^*-x_{k+1}\right\rangle+\frac{c}{\beta_k}(f(y_k)-f(x_{k+1}))+\frac{1}{2\bar{\mu}\beta_k^2}\mathbb{E}_{I_k}\left[\|\nabla_k\|^2\right]+\bar{a}_k(f(x_{k+1})-f(x^*))\\
     &\quad+A_{k+1}\mathbb{E}\left[f(y_{k+1})-f(x_{k+1})\right]+A_k(f(x_{k+1})-f(y_k))+\bar{a}_kf(y_0)\epsilon\\
    &\leq\bar{a}_k(f(x^*)-f(x_{k+1}))+\frac{1}{2\bar{\mu}\beta_k^2}\mathbb{E}_{I_k}\left[\|\nabla_k-\nabla f(x_{k+1})\|^2\right]+A_{k+1}\mathbb{E}\left[f(y_{k+1})-f(x_{k+1})\right]\\
    &\quad+\frac{1}{2\bar{\mu}\beta_k^2}\|\nabla f(x_{k+1})\|^2+\bar{a}_kf(y_0)\epsilon\nonumber
    \end{aligned}
\end{equation}
The first inequality follows from $\mathbb{E}\left[\|\nabla_k\|^2\right]=\mathbb{E}_{I_k}\left[\|\nabla_k\|^2\right]$, as $\nabla_k$ corresponds to the mini-batch in the $k^{\text{th}}$ iteration; the second inequality follows from the quasar-convexity of $f$ and the relation $\mathbb{E}_{I_k}\left[\|\nabla_k\|^2\right]=\mathbb{E}_{I_k}\left[\|\nabla_k-\nabla f(x_{k+1})\|^2\right]+\|\nabla f(x_{k+1})\|^2$. By $L$-smoothness and the gradient descent in Algorithm \ref{algorithm_for_quasar}, we have the following relation:
\begin{equation}
\begin{aligned}
\label{fy-fx}
    \mathbb{E}[f(y_{k+1})-f(x_{k+1})]&\leq\mathbb{E}[\langle\nabla f(x_{k+1}),y_{k+1}-x_{k+1}\rangle]+\frac{L}{2}\mathbb{E}\left[\|y_{k+1}-x_{k+1}\|^2\right]\\
    &=-\frac{1}{L}\|\nabla f(x_{k+1})\|^2+\frac{1}{2L}\mathbb{E}\left[\|\nabla_k\|^2\right]\\
    &=-\frac{1}{2L}\|\nabla f(x_{k+1})\|^2+\frac{1}{2L}\mathbb{E}\left[\|\nabla_k-\nabla f(x_{k+1})\|^2\right]
\end{aligned}
\end{equation}
Using the relation above, we obtain the following bound:
\begin{equation*}
    \begin{aligned}
    \mathbb{E}[E_{k+1}-E_k]&\leq\bar{a}_k(f(x^*)-f(x_{k+1}))+\frac{1}{2\bar{\mu}\beta_k^2}\mathbb{E}_{I_k}\left[\|\nabla_k-\nabla f(x_{k+1})\|^2\right]+A_{k+1}\mathbb{E}\left[f(y_{k+1})-f(x_{k+1})\right]\\
    &\quad+\frac{1}{2\bar{\mu}\beta_k^2}\|\nabla f(x_{k+1})\|^2+\bar{a}_kf(y_0)\epsilon\\
    &\leq \bar{a}_k(f(x^*)-f(x_{k+1}))+\left(\frac{1}{2\bar{\mu}\beta_k^2}+\frac{A_{k+1}}{2L}\right)\mathbb{E}_{I_k}\left[\|\nabla_k-\nabla f(x_{k+1})\|^2\right]\\
    &\quad+\left(\frac{1}{2\bar{\mu}\beta_k^2}-\frac{A_{k+1}}{2L}\right)\|\nabla f(x_{k+1})\|^2+\bar{a}_kf(y_0)\epsilon\\
    &\leq\bar{a}_k(f(x^*)-f(x_{k+1}))+\frac{A_{k+1}}{L}\mathbb{E}_{I_k}\left[\|\nabla_k-\nabla f(x_{k+1})\|^2\right]+\bar{a}_kf(y_0)\epsilon\\
    &\overset{\eqref{varupperboundproof}}{\leq}\bar{a}_k(f(x^*)-f(x_{k+1}))+4A_{k+1}\delta_k(f(x_{k+1})-f(x^*))+4A_{k+1}\delta_k(f(y_0)-f(x^*))+\bar{a}_kf(y_0)\epsilon
    \end{aligned}
\end{equation*}
We use the notation $\delta_k\triangleq\frac{n-b_k}{b_k(n-1)}$. The third inequality follows from the relation $1/\bar{\mu}\beta_k^2\leq A_{k+1}/L$. Actually, with the choice of $\beta_k$ and $A_k$, we have
\begin{equation*}
    \frac{1}{\bar{\mu}\beta_k^2}=\frac{4\bar{a}_k^2}{\bar{\mu}\gamma^2}=\frac{4\eta^2(2k+3)^2}{\bar{\mu}\gamma^2}\leq\frac{16\eta^2(k+2)^2}{\bar{\mu}\gamma^2}=\frac{A_{k+1}}{L},
\end{equation*}
where $\eta=\gamma^2\bar{\mu}/16L$. The last inequality follows from Proposition 3.1 \eqref{varupperboundproof}. Next we prove the relation $4L\delta_k/\beta_k\leq p$. It suffices to prove $\delta_k\leq\gamma p/8L\bar{a}_k$.
\begin{equation*}
    \delta_k=\frac{n-b_k}{b_k(n-1)}\leq\frac{2np}{2(n-1)p+\gamma\bar{\mu}(2k+3)}\frac{2(n-1)p+\gamma\bar{\mu}(2k+3)}{\gamma\bar{\mu}n(2k+3)}=\frac{2p}{\gamma\bar{\mu}(2k+3)}=\frac{\gamma p}{8L\bar{a}_k}
\end{equation*}
Thus we have $4A_{k+1}\delta_k\leq pA_{k+1}\beta_k/L$, by which we obtain the final bound of $\mathbb{E}[E_{k+1}-E_k]$.
\begin{equation*}
    \begin{aligned}
    \mathbb{E}[E_{k+1}-E_k]&\leq\bar{a}_k(f(x^*)-f(x_{k+1}))+4A_{k+1}\delta_k(f(x_{k+1})-f(x^*))+4A_{k+1}\delta_k(f(y_0)-f(x^*))+\bar{a}_kf(y_0)\epsilon\\
    &\leq\left(\bar{a}_k-\frac{pA_{k+1}\beta_k}{L}\right)(f(x^*)-f(x_{k+1}))+\frac{pA_{k+1}\beta_k}{L}(f(y_0)-f(x^*))+\bar{a}_kf(y_0)\epsilon\\
    &\leq\frac{8p\bar{a}_k}{\gamma\bar{\mu}}(f(y_0)-f(x^*))+\bar{a}_kf(y_0)\epsilon\\
    &\leq\frac{8p\bar{a}_k}{\gamma\bar{\mu}}f(y_0)+\bar{a}_kf(y_0)\epsilon
    \end{aligned}
\end{equation*}
The third inequality follows from two relations, $\bar{a}_k\geq pA_{k+1}\beta_k/L$ and $A_{k+1}\beta_k/L\leq8\bar{a}_k/\gamma\bar{\mu}$ and the last inequality follows from $f(x^*)\geq0$. With the choice of $A_k$ and $\beta_k$, we have
\begin{equation*}
    \frac{pA_{k+1}\beta_k}{L}=\frac{\gamma pA_{k+1}}{2L\bar{a}_k}\leq\frac{\gamma^2\bar{\mu}(k+2)^2}{32L(2k+3)}\leq\frac{\gamma^2\bar{\mu}(2k+3)}{32L}=\frac{\bar{a}_k}{2}\leq\bar{a}_k
\end{equation*}
\begin{equation*}
    \frac{A_{k+1}\beta_k}{L}=\frac{\gamma A_{k+1}}{2L\bar{a}_k}=\frac{\gamma(k+2)^2}{2L(2k+3)}\leq\frac{\gamma(2k+3)}{2L}=\frac{8\bar{a}_k}{\gamma\bar{\mu}}
\end{equation*}
Summing both sides of the inequality about the final bound, we obtain
\begin{equation*}
\begin{aligned}
    \mathbb{E}[f(y_t)-f(x^*)]&\leq A_t^{-1}\left(A_0(f(y_0)-f(x^*))+D_h(x^*,z_0)\right)+A_t^{-1}\sum_{k=0}^{t-1}\frac{8p\bar{a}_k}{\gamma\bar{\mu}}f(y_0)+f(y_0)\epsilon\\
    &\leq\frac{17LD_h(x^*,z_0)}{\gamma^2\bar{\mu}(t+1)^2}+\left(\frac{8p}{\gamma\bar{\mu}}+\epsilon\right)f(y_0)\\
    &\leq\frac{17LR^2}{\gamma^2\bar{\mu}(t+1)^2}+\left(\frac{8p}{\gamma\bar{\mu}}+\epsilon\right)f(y_0)
\end{aligned}
\end{equation*}
When $t\geq\left\lceil\sqrt{\frac{17LD_h(x^*,z_0)}{\gamma^2\bar{\mu}qf(y_0)}}\right\rceil$, we have
\begin{equation*}
    \mathbb{E}[f(y_t)-f(x^*)]\leq\left(q+\frac{8p}{\gamma\bar{\mu}}+\epsilon\right)f(y_0),
\end{equation*}
where $q+8p/\gamma\bar{\mu}+\epsilon<1$ with the choice of $q$ and $p$. Next we conclude the convergence rate of global stages. Suppose $y_{s}$ is the output of stage s. If $t\geq\left\lceil\sqrt{\frac{17LD_h(x^*,y_s)}{\gamma^2\bar{\mu}qf(y_s)}}\right\rceil$ at each stage, we have
\begin{equation*}
    \mathbb{E}[f(y_{s})-f(x^*)]\leq\left(q+\frac{8p}{\gamma\bar{\mu}}+\epsilon\right)^{s+1}f(y_0)
\end{equation*}
\textbf{Case 2: }$\mu>0$ (Option \rom{1})
Using the notation $d_{k+1}=A_{k+1}(f(y_{k+1})-f(x^*))-A_k(f(y_k)-f(x^*))$, we obtain
\begin{equation}
    \begin{aligned}
    E_{k+1}-E_k&=\bar{b}_kD_h(x^*,z_{k+1})-B_k\langle\nabla h(z_{k+1})-\nabla h(z_k),x^*-z_{k+1}\rangle-B_kD_h(z_{k+1},z_k)+d_{k+1}\\
    &{\leq}\bar{b}_kD_h(x^*,z_{k+1})+\frac{\alpha_kB_k}{\beta_k}\langle\nabla h(z_{k+1})-\nabla h(x_{k+1}),x^*-z_{k+1}\rangle+\frac{B_k}{\beta_k}\langle\nabla_k,x^*-z_{k+1}\rangle\\
    &\quad-\frac{\bar{\mu}B_k}{2}\|z_{k+1}-z_k\|^2+d_{k+1}\\
    &=\left(\bar{b}_k -\frac{\alpha_k B_k}{\beta_k}\right)D_h(x^*,z_{k+1})+\frac{\alpha_k B_k}{\beta_k}(D_h(x^*,x_{k+1})-D_h(z_{k+1},x_{k+1}))+\frac{B_k}{\beta_k}\langle\nabla_k, x^*-x_{k+1}\rangle\\
    &\quad +\frac{B_k(1-\tau_k)}{\beta_k}\langle\nabla_k, y_k-z_k\rangle+\frac{B_k}{\beta_k}\langle\nabla_k, z_k-z_{k+1}\rangle-\frac{\bar{\mu}B_k}{2}\|z_{k+1}-z_k\|^2+d_{k+1}\\
    &\leq\frac{\alpha_k B_k}{\beta_k}D_h(x^*,x_{k+1})-\frac{\bar{\mu}\alpha_kB_k}{2\beta_k}\|z_{k+1}-x_{k+1}\|^2+\frac{B_k}{\beta_k}\langle\nabla_k, x^*-x_{k+1}\rangle+\frac{B_k}{\beta_k}\langle\nabla_k, z_k-z_{k+1}\rangle\\
    &\quad-\frac{\bar{\mu}B_k}{2}\|z_{k+1}-z_k\|^2+\frac{B_k}{\beta_k}\left(c(f_{I_k}(y_k)-f_{I_k}(x_{k+1}))+b\|x_{k+1}-z_k\|^2\right)+d_{k+1}+\xi_k\\
    &\leq\frac{\alpha_kB_k}{\beta_k}D_h(x^*,x_{k+1})+\frac{B_k}{\beta_k}\langle\nabla_k,x^*-x_{k+1}\rangle+\frac{B_k}{\bar{\mu}\beta_k^2}\|\nabla_k\|^2+\frac{cB_k}{\beta_k}(f_{I_k}(y_k)-f_{I_k}(x_{k+1}))+d_{k+1}+\xi_k\nonumber
    \end{aligned}
\end{equation}
Note that $\xi_k=\frac{B_k}{\beta_k}\langle\nabla f(y_0)-\nabla f_{I_k}(y_0),x_{k+1}-z_k\rangle$, and $\mathbb{E}\left[\xi_k\right]=0$. The first inequality follows from the $\mu$-strong convexity of function $h$; the second inequality follows from \eqref{slackEOGa} (Lemma \ref{Existenceofalpha}), and the last inequality follows from the triangle inequality with the relation $\alpha_k/\beta_k \leq 1/2$. Then we take the expectation with respect to the history of random variable $I_{i_j}$ with its size equal to $b_i$, where $i=0,1,...,t-1$ and $j=1,2,...,\tbinom{n}{b_i}$.
\begin{equation}
    \begin{aligned}
    \mathbb{E}\left[E_{k+1}-E_k\right]&=\frac{\alpha_k B_k}{\beta_k}D_h(x^*,x_{k+1})+\frac{B_k}{\beta_k}\langle\nabla f(x_{k+1}),x^*-x_{k+1}\rangle+\frac{B_k}{\bar{\mu}\beta_k^2}\mathbb{E}_{I_k}\left[\|\nabla_k\|^2\right]+\frac{cB_k}{\beta_k}(f(y_k)-f(x_{k+1}))\\
    &\quad+\bar{a}_k(f(x_{k+1})-f(x^*))+A_k(f(x_{k+1})-f(y_k))+A_{k+1}\mathbb{E}\left[f(y_{k+1})-f(x_{k+1})\right]\\
    &\leq\frac{(\alpha_k-\gamma\mu)B_k}{\beta_k}D_h(x^*,x_{k+1})+\frac{\gamma B_k}{\beta_k}(f(x^*)-f(x_{k+1}))+\frac{cB_k}{\beta_k}(f(y_k)-f(x_{k+1}))\\
    &\quad+\frac{B_k}{\bar{\mu}\beta_k^2}\mathbb{E}_{I_k}\left[\|\nabla_k\|^2\right]+\bar{a}_k(f(x_{k+1})-f(x^*))+A_k(f(x_{k+1})-f(y_k))+A_{k+1}\mathbb{E}\left[f(y_{k+1})-f(x_{k+1})\right]\\
    &\leq\left(\frac{\gamma B_k}{\beta_k}-\bar{a}_k\right)(f(x^*)-f(x_{k+1}))+\frac{B_k}{\bar{\mu}\beta_k^2}\mathbb{E}_{I_k}\left[\|\nabla_k-\nabla f(x_{k+1})\|^2\right]+\frac{B_k}{\bar{\mu}\beta_k^2}\|\nabla f(x_{k+1})\|^2\\
    &\quad+\frac{A_{k+1}}{2L}\mathbb{E}_{I_k}\left[\|\nabla_k-\nabla f(x_{k+1})\|^2\right]-\frac{A_{k+1}}{2L}\|\nabla f(x_{k+1})\|^2\\
    &\leq\left(\frac{\gamma B_k}{\beta_k}-\bar{a}_k\right)(f(x^*)-f(x_{k+1}))+\frac{A_{k+1}}{L}\mathbb{E}_{I_k}\left[\|\nabla_k-\nabla f(x_{k+1})\|^2\right]\\
    &\leq\bar{a}_k(f(x^*)-f(x_{k+1}))+4A_{k+1}\delta_k(f(x_{k+1})-f(x^*))+4A_{k+1}\delta_k(f(y_0)-f(x^*))\nonumber
    \end{aligned}
\end{equation}
The first inequality follows from the uniform quasar-convexity of function $f$; the second inequality follows from \eqref{fy-fx}; the third inequality follows from the relation $B_k/\bar{\mu}\beta_k^2\leq A_{k+1}/2L$, and the last inequality follows from Proposition 3.1 \eqref{varupperboundproof}. We testify the correctness of relations above. With the choice of $A_k,B_k,\alpha_k,\beta_k$ and Observation \ref{lowerboundofkappa}, we have
\begin{equation*}
    \frac{\alpha_k}{\beta_k}=\frac{\bar{b}_k}{B_k}=\frac{\gamma}{\sqrt{8\kappa}}\leq\frac{\gamma}{\sqrt{8}}\sqrt{\frac{2-\gamma}{\gamma}}\leq\frac{\gamma}{\sqrt{8}}\sqrt{\frac{1}{\gamma^2}}\leq\frac{1}{2}.
\end{equation*}
\begin{equation*}
\frac{B_k}{\bar{\mu}\beta_k^2}=\frac{4(A_{k+1}-A_k)^2}{\gamma^2\bar{\mu}\mu A_k}=\frac{4A_k(\gamma/\sqrt{8\kappa})^2}{\gamma^2\bar{\mu}\mu}=\frac{A_k}{2L}\leq\frac{A_{k+1}}{2L}
\end{equation*}
Next we prove $4A_{k+1}\delta_k\leq \bar{a}_k/2$. It suffices to prove $\delta_k\leq\bar{a}_k/8A_{k+1}$.
\begin{equation*}
    \delta_k=\frac{n-b_k}{b_k(n-1)}\leq\frac{\gamma}{8(\sqrt{8\kappa}+\gamma)}=\frac{\bar{a}_k}{8A_{k+1}}
\end{equation*}
Thus we obtain the final bound of $\mathbb{E}[E_{k+1}-E_k]$.
\begin{equation*}
\begin{aligned}
    \mathbb{E}[E_{k+1}-E_k]&\leq\bar{a}_k(f(x^*)-f(x_{k+1}))+4A_{k+1}\delta_k(f(x_{k+1})-f(x^*))+4A_{k+1}\delta_k(f(y_0)-f(x^*))\\
    &\leq\frac{\bar{a}_k}{2}(f(y_0)-f(x^*))
\end{aligned}
\end{equation*}
Summing both sides of the inequality above, we obtain
\begin{equation*}
\begin{aligned}
    \mathbb{E}[f(y_t)-f(x^*)]&\leq \left(1+\frac{\gamma}{\sqrt{8\kappa}}\right)^{-t}(f(y_0)-f(x^*)+\mu D_h(x^*,z_0))+\frac{1}{2}(f(y_0)-f(x^*))\\
    &\leq\left(1+\frac{\gamma}{\sqrt{8\kappa}}\right)^{-t}\frac{2}{\gamma}(f(y_0)-f(x^*))+\frac{1}{2}(f(y_0)-f(x^*))
\end{aligned}
\end{equation*}
When $t\geq\log_{1+\gamma/\sqrt{8\kappa}}(2/\gamma q)$, we have
\begin{equation*}
    \mathbb{E}[f(y_t)-f(x^*)]\leq\left(q+\frac{1}{2}\right)(f(y_0)-f(x^*)),
\end{equation*}
where $q+1/2<1$ with the choice of $q$. Next we conclude the convergence rate of global stages. Suppose $y_{s}$ is the output of stage $s$. If $t\geq\log_{1+\gamma/\sqrt{8\kappa}}(2/\gamma q)$ at each stage, we have
\begin{equation*}
    \mathbb{E}[f(y_{s})-f(x^*)]\leq\left(q+\frac{1}{2}\right)^{s+1}(f(y_0)-f(x^*))=\left(q+\frac{1}{2}\right)^{s+1}\mathcal{E}_0.
\end{equation*}
For $\mu>0$ (Option \rom{2}), our analysis is identical to case 1.
\section{Proofs of Complexity}
\label{proofofcomplexity}
In this section, we analyze the overall complexity of QASGD and QASVRG. As the complexity of QAGD has been analyzed in \citet{hinder}, we will not provide the related proof. Analogously, we apply the same metric as \citet{hinder} propose, which is the total number of function and gradient evaluations. Roughly speaking, the overall complexity is the multiplication of the number of iterations and the number of function and gradient evaluations per iteration. We consider the worst case of Bisearch where $0,1$ and "guess" do not meet our conditions, and we need to do line search at each iteration. In this situation, we access the gradient (estimate) in Bisearch that can be directly used in the subsequent updates at each iteration, i.e., no additional access to $\nabla_k$ is required in the mirror descent step and the gradient descent step. The number of $f_i$ involved in Bisearch affects the complexity. While QAGD needs all $f_i$ in Bisearch, QASGD and QASVRG only need a single $f_i$ and a mini-batch of $f_i$ respectively. Next we present the analysis of the complexity of Bisearch per iteration for QASGD and QASVRG in the Euclidean setting where $D_h(x,y)=\frac{1}{2}\|x-y\|^2$, and present the proof of Theorem 3.5 and Theorem 3.6.  Lemma \ref{complexityofBi} implies that Bisearch needs $O\left(\log^+\left((1+c)\min\left\{\frac{L\|y_k-z_k\|^2}{\tilde{\epsilon}},\frac{L^3}{b^3}\right\}\right)\right)$ function and gradient evaluations per iteration for a single function. As $\tilde{\epsilon}$ and $b$ can not be simultaneously non-zero, we have two different situations corresponding to different complexity. 
\begin{equation*}
    O\left(\log^+\left((1+c)\min\left\{\frac{L\|y_k-z_k\|^2}{\tilde{\epsilon}},\frac{L^3}{b^3}\right\}\right)\right)=\begin{cases}
    O\left(\log^+\left((1+c)\frac{L^3}{b^3}\right)\right)\quad &\mu>0,\\
    O\left(\log^+\left((1+c)\frac{L\|y_k-z_k\|^2}{\tilde{\epsilon}}\right)\right)\quad&\mu=0.
\end{cases}
\end{equation*}
When $\mu=0$, the key to our proof is to bound $\|y_k-z_k\|$.
\subsection{Proof of Corollary \ref{complexASGD}}
\textbf{Case 1: }$\mu=0$

By the proof of 3.2, we have $\mathbb{E}[E_{k+1}-E_k]\leq\frac{\sigma^2}{2\beta_k^2}+\frac{\bar{a}_k\epsilon}{2}$. Assuming $\|x^*-z_0\|\leq R$, we have the following relation.
\begin{equation*}
    \begin{aligned}
    \frac{1}{2}\mathbb{E}\|x^*-z_k\|^2&\leq A_0(f(x_0)-f(x^*))+\frac{1}{2}\|x^*-z_0\|^2+\sum_{j=0}^{k-1}\frac{\sigma^2}{2\beta_j^2}+\frac{A_k\epsilon}{2}\\
    &\leq\|x^*-z_0\|^2+\sum_{j=0}^{k-1}\frac{\sigma^2\eta^2}{2\gamma^2}(2j+3)^2+\frac{\eta\epsilon}{2}(k+1)^2\\
    &\leq 5R^2+\frac{(k+1)^2}{2L}\epsilon
    \end{aligned}
\end{equation*}
The third inequality follows from $\eta=\min\left(\frac{1}{L},\sqrt{\frac{2\|x^*-z_0\|^2}{\sigma^2}}\frac{\gamma}{(t+1)^{3/2}}\right)$. Combining the analysis above, we have $\mathbb{E}\|x^*-z_k\|^2\leq 10R^2+\frac{(k+1)^2}{L}\epsilon$ and $\mathbb{E}\|x^*-z_k\|\leq\sqrt{\mathbb{E}\|x^*-z_k\|^2}\leq\sqrt{10R^2+\frac{(k+1)^2}{L}\epsilon}$ by Jensen's inequality. Thus we obtain
\begin{equation*}
    \mathbb{E}\|z_k-z_{k-1}\|=\mathbb{E}\left\|\frac{1}{\beta_{k-1}}\nabla_{k-1}\right\|\leq\mathbb{E}\|x^*-z_k\|+\mathbb{E}\|x^*-z_{k-1}\|\leq 2\sqrt{10R^2+\frac{(k+1)^2}{L}\epsilon}.
\end{equation*}
As the stepsize of gradient descent is $0$, $y_k$ is identical to $x_k$, and we solely need to bound $\|x_k-z_k\|$. By the definition of $z_k$ and $x_k$, we have
\begin{equation*}
    \begin{aligned}
    \mathbb{E}\|x_k-z_k\|&=\mathbb{E}\left\|(1-\tau_{k-1})z_{k-1}+\tau_{k-1}x_{k-1}-z_{k-1}+\frac{1}{\beta_{k-1}}\nabla_{k-1}\right\|\\
    &=\mathbb{E}\left\|\tau_{k-1}(x_{k-1}-z_{k-1})+\frac{1}{\beta_{k-1}}\nabla_{k-1}\right\|\\
    &\leq \tau_{k-1}\mathbb{E}\|x_{k-1}-z_{k-1}\|+\mathbb{E}\left\|\frac{1}{\beta_{k-1}}\nabla_{k-1}\right\|\\
    &\leq \mathbb{E}\|x_{k-1}-z_{k-1}\|+2\sqrt{10R^2+\frac{(k+1)^2}{L}\epsilon}\\
    &\leq \mathbb{E}\|x_{k-1}-z_{k-1}\|+2\sqrt{10}R+2(k+1)\sqrt{\frac{\epsilon}{L}}
    \end{aligned}
\end{equation*}
The last inequality follows from the relation $\sqrt{a^2+b^2}\leq a+b$ for $a,b\geq  0$. By the proof of Theorem 3.2, we have
\begin{equation*}
    \mathbb{E}[f(x_t)-f(x^*)]\leq \frac{LR^2}{(t+1)^2}+\frac{2\sigma }{\gamma}\frac{R}{\sqrt{2(t+1)}}\leq\frac{LR^2+\frac{\sqrt{2}\sigma R}{\gamma}}{\sqrt{t+1}}.
\end{equation*}
Suppose $k\leq k_{\max}=\left\lfloor\frac{4\left(LR^2+\frac{\sqrt{2}\sigma R}{\gamma}\right)^2}{\epsilon^2}\right\rfloor$, and we obtain
\begin{equation*}
\begin{aligned}
    \mathbb{E}\|x_k-z_k\|&\leq 2\sqrt{10}Rk+\sqrt{\frac{\epsilon}{L}}k(k+3)\\
    &\leq 2\sqrt{10}Rk+4\sqrt{\frac{\epsilon}{L}}k^2\\
    &\leq 8\sqrt{10}R \frac{\left(LR^2+\frac{\sqrt{2}\sigma R}{\gamma}\right)^2}{\epsilon^2}+\frac{64\left(LR^2+\frac{\sqrt{2}\sigma R}{\gamma}\right)^4}{\sqrt{L}\epsilon^{\frac{7}{2}}}\\
    &\leq 8\sqrt{10}R\frac{(LR^2+\sqrt{2}\sigma R)^2}{\gamma^2\epsilon^2}+\frac{64\left(LR^2+{\sqrt{2}\sigma R}\right)^4}{\sqrt{L}\gamma^4\epsilon^{\frac{7}{2}}}\\
    &=O\left(\frac{L^{7/2}R^8}{\gamma^4\epsilon^{7/2}}\right)
\end{aligned}
\end{equation*}
The first inequality follows from the relation $k+3\leq 4k$ for $k\geq 1$. By Markov's inequality, $P_r(\|x_k-z_k\|\geq k_{\max}\mathbb{E}\|x_k-z_k\|)\leq\frac{1}{k_{\max}}$, which implies $\|x_k-z_k\|\leq k_{\max}\mathbb{E}\|x_k-z_k\|\leq O\left(\frac{L^{11/2}R^{12}}{\gamma^6\epsilon^{11/2}}\right)$ with probability at least $1-\frac{1}{k_{\max}}$. Thus we have $\frac{L\|x_k-z_k\|^2}{\tilde{\epsilon}}\leq O\left(\frac{L^{12}R^{24}}{\gamma^{13}\epsilon^{12}}\right)$. Besides, we have $c+1=\frac{\gamma(k+1)^2}{2k+3}+1\leq\gamma k+2\leq\frac{4(LR^2+\sqrt{2}\sigma R)^2}{\gamma\epsilon^2}+2=O\left(\frac{L^2R^4}{\gamma\epsilon^2}\right)$. Then we obtain the following upper bound of the term inside $\log^+()$:
\begin{equation*}
    \begin{aligned}
    (1+c)\min\left\{\frac{L\|x_k-z_k\|^2}{\tilde{\epsilon}},\frac{L^3}{b^3}\right\}&=(1+c)\frac{L\|x_k-z_k\|^2}{\tilde{\epsilon}}\\
    &\leq O\left(\frac{L^{14}R^{28}}{\gamma^{14}\epsilon^{14}}\right)
    \end{aligned}
\end{equation*}
Thus we have $O\left(\log^+\left((1+c)\min\left\{\frac{L\mathbb{E}\|y_k-z_k\|^2}{\tilde{\epsilon}},\frac{L^3}{b^3}\right\}\right)\right)\leq O(\log^{+}(LR^2\gamma^{-1}\epsilon^{-1}))$.
In the case of $\mu=0$, QASGD needs $O\left(\sqrt{\frac{LR^2}{\epsilon}}+\frac{\sigma^2R^2}{\gamma^2\epsilon^2}\right)$ iterations to generate an $\epsilon$-approximate solution under expectation. Therefore, the overall complexity of QASGD is upper bounded by $(\mu=0)$ is $O\left(\left(\sqrt{\frac{LR^2}{\epsilon}}+\frac{\sigma^2R^2}{\gamma^2\epsilon^2}\right)\log^+\left(\frac{LR^2}{\gamma\epsilon}\right)\right)$ with high probability.

\textbf{Case 2: }$\mu>0$

As $\tilde{\epsilon}=0$ and $b>0$, we have $O\left(\log^+\left((1+c)\min\left\{\frac{L\|y_k-z_k\|^2}{\tilde{\epsilon}},\frac{L^3}{b^3}\right\}\right)\right)=O\left(\log^+\left((1+c)\frac{L^3}{b^3}\right)\right)$. In Step 1, $b=\frac{\gamma\mu}{4}$ and $c=\frac{8}{\gamma}$. Then we have
\begin{equation*}
    (1+c)\frac{L^3}{b^3}\leq\frac{9}{\gamma}\frac{64L^3}{\gamma^3\mu^3}=\frac{576\kappa^3}{\gamma^4}
\end{equation*}
Thus we have $O\left(\log^+\left((1+c)\min\left\{\frac{L\mathbb{E}\|y_k-z_k\|^2}{\tilde{\epsilon}},\frac{L^3}{b^3}\right\}\right)\right)=O\left(\log^+\left(\frac{\kappa^{3/4}}{\gamma}\right)\right)$. By the proof of Theorem 3.2, QASGD needs $O\left(\frac{1}{\gamma^2}\log\left(\frac{f(x_0)-f(x^*))}{\gamma\epsilon}\right)\right)$ iterations in Step 1, and the complexity of Step 1 is $O\left(\frac{1}{\gamma^2}\log\left(\frac{f(x_0)-f(x^*))}{\gamma\epsilon}\right)\log^+\left(\frac{\kappa^{3/4}}{\gamma}\right)\right)$. In step 2, $b=\frac{\gamma\mu}{4}$ and $c=\frac{\gamma(k+48/\gamma^2)^2}{2(2k+96/\gamma^2+1)}\leq\frac{\gamma(k+48/\gamma^2)}{2}\leq\frac{k+48}{2\gamma}$. We can upper bound the convergence rate of Step 2:
\begin{equation*}
    \mathbb{E}[f(x_t)-f(x^*)]\leq \frac{9\sigma^2}{\gamma^2\mu(t+48/\gamma^2)^2}+\frac{36\sigma^2}{\gamma^2\mu(t+48/\gamma^2)}\leq\frac{45\sigma^2}{\gamma^2\mu (t+48/\gamma^2)}.
\end{equation*}
Suppose $k\leq\left\lfloor\frac{90\sigma^2}{\gamma^2\mu\epsilon}\right\rfloor$. We have $c+1\leq\frac{C_3}{\gamma^3\epsilon}$, where $C_3=\frac{45\sigma^2}{\mu}+25$.
\begin{equation*}
    (1+c)\frac{L^3}{b^3}\leq\frac{C_3}{\gamma^3\epsilon}\frac{64\kappa^3}{\gamma^3}=\frac{64C_3\kappa^3}{\gamma^6\epsilon}
\end{equation*}
Thus $O\left(\log^+\left((1+c)\min\left\{\frac{L\|y_k-z_k\|^2}{\tilde{\epsilon}},\frac{L^3}{b^3}\right\}\right)\right)=O\left(\log^+\left(\frac{\kappa^{2/3}}{\gamma\epsilon^{1/6}}\right)\right)$. QASGD needs $O\left(\frac{\sigma^2}{\gamma^2\epsilon}\right)$ iterations in Step 2, and the complexity of Step 2 is $O\left(\frac{\sigma^2}{\gamma^2\epsilon}\log^+\left(\frac{\kappa^{2/3}}{\gamma\epsilon^{1/6}}\right)\right)$. In summary, the overall complexity of QASGD $(\mu>0)$ is upper bounded by $O\left(\frac{1}{\gamma^2}\log\left(\frac{f(x_0)-f(x^*))}{\gamma\epsilon}\right)\log^+\left(\frac{\kappa^{3/4}}{\gamma}\right)+\frac{\sigma^2}{\gamma^2\epsilon}\log^+\left(\frac{\kappa^{2/3}}{\gamma\epsilon^{1/6}}\right)\right)$.
\subsection{Proof of Corollary \ref{complexASVRG}}
\textbf{Case 1: }$\mu=0$

By the proof of Theorem 3.3, we have $\mathbb{E}[E_{k+1}-E_k]\leq\bar{a}_k\left(\frac{1}{2}+\epsilon\right)f(y_0)$. Assuming $\|x^*-z_0\|\leq R$, we have the following relation.
\begin{equation*}
    \begin{aligned}
    \frac{1}{2}\mathbb{E}\|x^*-z_k\|^2&\leq A_0(f(y_0)-f(x^*))+\frac{1}{2}\|x^*-z_0\|^2+A_k\left(\frac{1}{2}+\epsilon\right)f(y_0)\\
    &\leq \|z_0-x^*\|^2+\frac{3\gamma^2}{32L}(k+1)^2f(y_0)+\frac{\gamma^2}{16L}(k+1)^2f(y_0)\epsilon\\
    &\leq R^2+\frac{3\gamma^2}{32L}(k+1)^2f(y_0)+\frac{\gamma^2}{16L}(k+1)^2f(y_0)\epsilon
    \end{aligned}
\end{equation*}
Thus we have $\mathbb{E}\|x^*-z_k\|^2\leq2R^2+\frac{3\gamma^2}{16L}(k+1)^2f(y_0)+\frac{\gamma^2}{8L}(k+1)^2f(y_0)\epsilon$ and by Jensen's inequality
\begin{equation*}
\begin{aligned}
    \mathbb{E}\|\nabla_{k-1}\|=\beta_{k-1}\mathbb{E}\|z_k-z_{k-1}\|&\leq\beta_{k-1}\mathbb{E}(\|x^*-z_k\|+\|x^*-z_{k-1}\|)\\
    &\leq\frac{16L}{\gamma(2k+1)}\sqrt{2R^2+\frac{3\gamma^2}{16L}(k+1)^2f(y_0)+\frac{\gamma^2}{8L}(k+1)^2f(y_0)\epsilon}.
    \end{aligned}
\end{equation*}
By the definition of $y_k$ and $z_k$, we have
\begin{equation*}
    \begin{aligned}
    \mathbb{E}\|y_k-z_k\|&=\mathbb{E}\left\|x_k-\frac{1}{L}\nabla_{k-1}-z_{k-1}+\frac{1}{\beta_{k-1}}\nabla_{k-1}\right\|\\
    &=\mathbb{E}\left\|(1-\tau_{k-1})z_{k-1}+\tau_{k-1}y_{k-1}-\frac{1}{L}\nabla_{k-1}-z_{k-1}+\frac{1}{\beta_{k-1}}\nabla_{k-1}\right\|\\
    &\leq\tau_{k-1}\mathbb{E}\|y_{k-1}-z_{k-1}\|+\left|\frac{1}{\beta_{k-1}}-\frac{1}{L}\right|\mathbb{E}\|\nabla_{k-1}\|\\
    &\leq\mathbb{E}\|y_{k-1}-z_{k-1}\|+\left(\frac{1}{\beta_{k-1}}+\frac{1}{L}\right)\mathbb{E}\|\nabla_{k-1}\|\\
    &\leq\mathbb{E}\|y_{k-1}-z_{k-1}\|+\frac{2k+9}{8L}\mathbb{E}\|\nabla_{k-1}\|\\
    &\leq\mathbb{E}\|y_{k-1}-z_{k-1}\|+\frac{8}{\gamma}\sqrt{2R^2+\frac{3\gamma^2}{16L}(k+1)^2f(y_0)+\frac{\gamma^2}{8L}(k+1)^2f(y_0)\epsilon}\\
    &\leq\mathbb{E}\|y_{k-1}-z_{k-1}\|+\frac{8\sqrt{2}R}{\gamma}+2\sqrt{3}(k+1)\sqrt{\frac{f(y_0)}{L}}+2\sqrt{2}(k+1)\sqrt{\frac{f(y_0)\epsilon}{L}}
    \end{aligned}
\end{equation*}
Suppose $f(y_0)\geq\epsilon$ and $k\leq \left\lfloor\sqrt{\frac{17LR^2}{2\gamma^2qf(y_0)}}\right\rfloor\leq k_{\max}=\left\lfloor\sqrt{\frac{17LR^2}{2\gamma^2q\epsilon}}\right\rfloor$, and we obtain
\begin{equation*}
    \begin{aligned}
    \mathbb{E}\|y_k-z_k\|&\leq\frac{8\sqrt{2}Rk}{\gamma}+\sqrt{3}k(k+3)\sqrt{\frac{f(y_0)}{L}}+\sqrt{2}k(k+3)\sqrt{\frac{f(y_0)\epsilon}{L}}\\
    &\leq\frac{8\sqrt{2}Rk}{\gamma}+4\sqrt{3}k^2\sqrt{\frac{f(y_0)}{L}}+4\sqrt{2}k^2\sqrt{\frac{f(y_0)\epsilon}{L}}\\
    &\leq\frac{8\sqrt{17q}R^2}{\gamma^2q}\sqrt{\frac{L}{f(y_0)}}+\frac{34\sqrt{3}R^2}{\gamma^2q}\sqrt{\frac{L}{f(y_0)}}+\frac{34\sqrt{2}R^2}{\gamma^2 q}\sqrt{\frac{L\epsilon}{f(y_0)}}\\
    &\leq\frac{108R^2L^{1/2}}{\gamma^2\epsilon^{1/2}q}+\frac{34\sqrt{2}R^2L^{1/2}}{\gamma^2q}\\
    &\leq\frac{176R^2L^{1/2}}{\gamma^2\epsilon^{1/2}q}=O\left(\frac{L^{1/2}R^2}{q\gamma^2\epsilon^{1/2}}\right)
    \end{aligned}
\end{equation*}
By Markov's inequality, $P_r(\|y_k-z_k\|\geq k_{\max}\mathbb{E}\|y_k-z_k\|)\leq\frac{1}{k_{\max}}$, which implies $\|y_k-z_k\|\leq k_{\max}\mathbb{E}\|x_k-z_k\|\leq O\left(\frac{LR^{3}}{q^{3/2}\gamma^3\epsilon}\right)$ with probability at least $1-\frac{1}{k_{\max}}$. Thus we have $\frac{L\|y_k-z_k\|^2}{\tilde{\epsilon}}\leq O\left(\frac{L^3R^6}{q^3\gamma^7\epsilon^4}\right)$. Besides, we have $c+1=\frac{\gamma(k+1)^2}{2(2k+3)}+1\leq\frac{\gamma k}{2}+\frac{3}{2}\leq\frac{R}{2}\sqrt{\frac{17L}{2q\epsilon}}+\frac{3}{2}=O\left(\frac{L^{1/2}R}{q^{1/2}\epsilon^{1/2}}\right)$. Then we obtain the following upper bound of the term inside $\log^+()$:
\begin{equation*}
    \begin{aligned}
    (1+c)\min\left\{\frac{L\|y_k-z_k\|^2}{\tilde{\epsilon}},\frac{L^3}{b^3}\right\}&=(1+c)\frac{L\|y_k-z_k\|^2}{\tilde{\epsilon}}\leq O\left(\frac{L^{7/2}R^7}{q^{7/2}\gamma^7\epsilon^{9/2}}\right)
    \end{aligned}
\end{equation*}
Thus we have $O\left(\log^+\left((1+c)\min\left\{\frac{L\|y_k-z_k\|^2}{\tilde{\epsilon}},\frac{L^3}{b^3}\right\}\right)\right)\leq O\left(\log^{+}\left(\frac{L^{1/2}R}{q^{1/2}\gamma\epsilon^{9/14}}\right)\right)$ with high probability. As we need to access the full gradient and function value evaluated at $y_0$ per stage and the gradient and function value of mini-batch to calculate SVRG and $\tilde{\epsilon}$, the overall complexity of QASVRG $(\mu=0)$ to generate an $\epsilon$-approximate solution is
\begin{equation*}
\begin{aligned}
    &O\left(\left(2n+\sum_{k=0}^{t-1}b_k\log^{+}\left(\frac{L^{1/2}R}{q^{1/2}\gamma\epsilon^{9/14}}\right)\right)\log\left(\frac{1}{\epsilon}\right)\right)\\
    &=O\left(\left(2n+\sum_{k=0}^{t-1}\frac{\gamma n(2k+3)}{2(n-1)p+\gamma(2k+3)}\log^{+}\left(\frac{L^{1/2}R}{q^{1/2}\gamma\epsilon^{9/14}}\right)\right)\log\left(\frac{1}{\epsilon}\right)\right)\\
    &\leq O\left(\left(2n+\frac{\gamma nt(2t+1)}{2(n-1)p+\gamma(2t+1)}\log^{+}\left(\frac{L^{1/2}R}{q^{1/2}\gamma\epsilon^{9/14}}\right)\right)\log\left(\frac{1}{\epsilon}\right)\right)\\
    &\leq O\left(\left(n+\frac{nLR^2}{\gamma\epsilon n+\gamma\sqrt{\epsilon LR^2}}\log^{+}\left(\frac{L^{1/2}R}{q^{1/2}\gamma\epsilon^{9/14}}\right)\right)\log\left(\frac{1}{\epsilon}\right)\right)
\end{aligned}
\end{equation*}
where $t=\left\lceil\sqrt{\frac{17L\|x^*-z_0\|^2}{2\gamma^2qf(y_0)}}\right\rceil\leq\sqrt{\frac{17LR^2}{2\gamma^2qf(y_0)}}$ is the maximum number of iterations per stage, and $O(\log(\epsilon^{-1}))$ is the number of stages. Note that $D_h(x^*,y_s)$ is uniformly bounded by $R^2$ under Assumption \ref{compactset}, which is in the bound above.

\textbf{Case 2: }$\mu>0$

For Option \rom{2}, we have $t=\left\lceil\sqrt{\frac{17L\|x^*-z_0\|^2}{2\gamma^2qf(y_0)}}\right\rceil\leq\left\lceil\sqrt{\frac{17L\|x^*-z_0\|^2}{2\gamma^2q\mathcal{E}_0}}\right\rceil\leq\sqrt{\frac{17L(2-\gamma)}{\gamma^3q\mu}}\leq\sqrt{\frac{34\kappa}{\gamma^3q}}$ using the last relation in Assumption \ref{Bregmanassump}. Thus the overall complexity of QASVRG (Option \rom{2}) to generate an $\epsilon$-approximate solution is
\begin{equation*}
    \begin{aligned}
    &O\left(\left(2n+\sum_{k=0}^{t-1}b_k\log^{+}\left(\frac{L^{1/2}R}{q^{1/2}\gamma\epsilon^{9/14}}\right)\right)\log\left(\frac{1}{\epsilon}\right)\right)\\
    &=O\left(\left(2n+\sum_{k=0}^{t-1}\frac{\gamma n(2k+3)}{2(n-1)p+\gamma(2k+3)}\log^{+}\left(\frac{L^{1/2}R}{q^{1/2}\gamma\epsilon^{9/14}}\right)\right)\log\left(\frac{1}{\epsilon}\right)\right)\\
    &\leq O\left(\left(2n+\frac{\gamma nt(2t+1)}{2(n-1)p+\gamma(2t+1)}\log^{+}\left(\frac{L^{1/2}R}{q^{1/2}\gamma\epsilon^{9/14}}\right)\right)\log\left(\frac{1}{\epsilon}\right)\right)\\
    &\leq O\left(\left(n+\frac{n\kappa}{\gamma^2 n+\gamma^{3/2}\sqrt{\kappa}}\log^{+}\left(\frac{L^{1/2}R}{q^{1/2}\gamma\epsilon^{9/14}}\right)\right)\log\left(\frac{1}{\epsilon}\right)\right)
    \end{aligned}
\end{equation*}
where $O(\log(\epsilon^{-1}))$ is the number of stages. For Option \rom{1}, $\tilde{\epsilon}=0,b=\frac{\gamma\mu}{4}$ and $c=\sqrt{2\kappa}$. Then we obtain the following relation:
\begin{equation*}
    (1+c)\min\left\{\frac{L\|y_k-z_k\|^2}{\tilde{\epsilon}},\frac{L^3}{b^3}\right\}=(1+c)\frac{L^3}{b^3}=(1+\sqrt{2\kappa})\frac{64L^3}{\gamma^3\mu^3}=(1+\sqrt{2\kappa})\frac{64\kappa^3}{\gamma^3}.
\end{equation*}
Thus we have $O\left(\log^+\left((1+c)\min\left\{\frac{L\|y_k-z_k\|^2}{\tilde{\epsilon}},\frac{L^3}{b^3}\right\}\right)\right)=O(\log^{+}(\kappa^{7/6}\gamma^{-1}))$. At each stage, QASVRG (Option \rom{2}) is run until $(1+\gamma/\sqrt{8\kappa})^{-t}\leq \frac{\gamma q}{2}$. Let $(1+\gamma/\sqrt{8\kappa})^{-t}\leq e^{-\frac{\gamma t}{\sqrt{8\kappa}+\gamma}}\leq\frac{\gamma q}{2}$, and we have $t\geq\frac{\sqrt{8\kappa}+\gamma}{\gamma}\log\left(\frac{2}{\gamma q}\right)$, and $t=\left\lceil\log_{1+\frac{\gamma}{\sqrt{8\kappa}}}\left(\frac{2}{\gamma q}\right)\right\rceil\leq\frac{\sqrt{8\kappa}+\gamma}{\gamma}\log\left(\frac{2}{\gamma q}\right)\leq\frac{5\sqrt{\kappa}}{\gamma}\log\left(\frac{2}{\gamma q}\right)$. Thus the overall complexity of QASVRG (Option \rom{1}) to generate an $\epsilon$-approximate solution is 
\begin{equation*}
    \begin{aligned}
    O\left(\left(n+\sum_{k=0}^{t-1}b_k\log^{+}\left(\frac{\kappa^{7/6}}{\gamma}\right)\right)\log\left(\frac{1}{\epsilon}\right)\right)&=O\left(\left(n+\sum_{k=0}^{t-1}\frac{8n(\sqrt{8\kappa}+\gamma)}{\gamma (n-1)+8(\sqrt{8\kappa}+\gamma)}\log^{+}\left(\frac{\kappa^{7/6}}{\gamma}\right)\right)\log\left(\frac{1}{\epsilon}\right)\right)\\
    &= O\left(\left(n+\frac{40nt\sqrt{\kappa}}{\gamma (n-1)+8(\sqrt{8\kappa}+\gamma)}\log^{+}\left(\frac{\kappa^{7/6}}{\gamma}\right)\right)\log\left(\frac{1}{\epsilon}\right)\right)\\
    &\leq O\left(\left(n+\frac{n\kappa}{\gamma^2 n+\gamma\sqrt{\kappa}}\log\left(\frac{2}{\gamma q}\right)\log^{+}\left(\frac{\kappa^{7/6}}{\gamma}\right)\right)\log\left(\frac{1}{\epsilon}\right)\right)
    \end{aligned}
\end{equation*}
where $O(\log(\epsilon^{-1}))$ is the number of stages.
\section{Theoretical Extension}
We analyze QASGD under a restrictive condition: strong growth condition (SGC), which is formally formulated in the following. This condition has been proposed in \citet{schmidt2013fast}, \citet{vaswani2019fast}, and \citet{gower2021}. \citet{schmidt2013fast} derive optimal convergence rates for SGD under SGC for convex and strongly convex functions.
\begin{assumption}[SGC]
\label{SGC}
    Suppose $i$ is sampled i.i.d from $[n]$. For some constant $\rho$ and $x^*\in\mathcal{X}^*$, we have
    \begin{equation*}
        E_i\left[\|\nabla f_i(x)\|^2\right]\leq \rho \|\nabla f(x)\|^2.
    \end{equation*}
\end{assumption}
If $\nabla f(x)=0$, then $\nabla f_i(x)=0$ under SGC, which implies the interpolation assumption. We derive better convergence rates for QASGD under SGC for $f\in\mathcal{Q}_{\mu\gamma}$.
\begin{theorem}[QASGD under SGC]
\label{convergencerateAGD}
Suppose Assumption \ref{Bregmanassump} and Assumption \ref{SGC} hold, $D_h(x^*,z_0)\leq R^2$, $f\in\mathcal{F}_L$, and choose any $\tilde{y}_0\in\mathbb{R}^d$. Then Algorithm \ref{algorithm_for_quasar} with the choices of $\nabla_k=\nabla f_i(x_{k+1})$ and $A_k, B_k, \theta_k$ specified in Table \ref{parameterofQASGDSGC} satisfies
\begin{equation}
\label{QAGDdiff}
    \mathbb{E}[E_{k+1}-E_k]\leq\left\{\begin{aligned}
    &\frac{\Bar{a}_k\epsilon}{2}, &\mu=0,\\
    &0, &\mu>0.
\end{aligned}
\right.
\end{equation}
Summing both sides of \eqref{QAGDdiff}, we conclude the convergence rate as follows:
\begin{equation}
    \mathbb{E}[f(y_t)-f(x^*)]\simeq\left\{\begin{aligned}
    &\frac{L\rho R^2}{\gamma^2t^2}+\frac{\epsilon}{2}, &\mu=0,\\
    &\left(1+\frac{\gamma}{2\rho\sqrt{\kappa}}\right)^{-t}E_0, &\mu>0.
\end{aligned}
\right.
\end{equation}
\end{theorem}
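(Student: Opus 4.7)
The plan is to adapt the Lyapunov argument of QAGD from Appendix~\ref{proofQAGD} to the stochastic setting, exploiting Assumption~\ref{SGC} to bound the second moment of the stochastic gradient \emph{by the squared full-gradient norm}, rather than by a separate constant $\sigma^2$ as in Theorem~\ref{convergencerateASGD}. The payoff is that the extra $\|\nabla f(x_{k+1})\|^2$ contribution produced by the mirror-descent / Fenchel--Young step can be absorbed into the gradient-descent descent term at the cost of only a factor $\rho$, giving an \emph{accelerated} rate instead of the noisy $O(\sigma/\sqrt{t})$ tail.

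First I would take the same Lyapunov function $E_k=A_k(f(y_k)-f(x^*))+B_kD_h(x^*,z_k)$ and walk through the decomposition in Appendix~\ref{proofQAGD} line by line, but carrying the stochastic gradient $\nabla_k$ in place of $\nabla f(x_{k+1})$. Taking conditional expectation, using $\mathbb{E}_i[\nabla_k]=\nabla f(x_{k+1})$ and the Fenchel--Young inequality gives, in the $\mu=0$ case,
\begin{equation*}
\mathbb{E}[E_{k+1}-E_k]\leq \tfrac{1}{2\bar\mu\beta_k^2}\mathbb{E}_i[\|\nabla_k\|^2]+A_{k+1}\mathbb{E}[f(y_{k+1})-f(x_{k+1})]+\tfrac{\bar a_k\epsilon}{2},
\end{equation*}
after using $\gamma$-quasar-convexity on $\langle\nabla f(x_{k+1}),x^*-x_{k+1}\rangle$ and Lemma~\ref{Existenceofalpha} on the $\langle\nabla_k,y_k-z_k\rangle$ term (with $\tilde f=f_i$). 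The strongly quasar-convex case is analogous, mirroring Appendix~\ref{proofQAGD}'s Case~2 with the extra $-\gamma\mu D_h(x^*,x_{k+1})$ term absorbing the uniform-quasar-convexity slack.

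The novel step is to plug in Assumption~\ref{SGC} twice. For the Fenchel--Young remainder,
\begin{equation*}
\tfrac{1}{2\bar\mu\beta_k^2}\mathbb{E}_i[\|\nabla_k\|^2]\leq \tfrac{\rho}{2\bar\mu\beta_k^2}\|\nabla f(x_{k+1})\|^2,
\end{equation*}
and for the descent step $y_{k+1}=x_{k+1}-\rho_k\nabla_k$ with $\rho_k=1/(L\rho)$, $L$-smoothness yields
\begin{equation*}
\mathbb{E}_i[f(y_{k+1})-f(x_{k+1})]\leq -\rho_k\|\nabla f(x_{k+1})\|^2+\tfrac{L\rho_k^2}{2}\mathbb{E}_i[\|\nabla_k\|^2]\leq -\tfrac{1}{2L\rho}\|\nabla f(x_{k+1})\|^2.
\end{equation*}
Combining these, the full $\|\nabla f(x_{k+1})\|^2$ coefficient becomes $\bigl(\tfrac{\rho}{2\bar\mu\beta_k^2}-\tfrac{A_{k+1}}{2L\rho}\bigr)$, non-positive whenever $\beta_k^2\geq L\rho^2/(\bar\mu A_{k+1})$. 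This dictates the parameter table: $A_k=\bar\mu\gamma^2/(4L\rho^2)(k+1)^2,\ B_k=1,\ \beta_k=\gamma/\bar a_k$ for $\mu=0$, and $A_k=(1+\gamma/(2\rho\sqrt\kappa))^k,\ B_k=\mu A_k,\ \beta_k=\gamma\mu B_k/\bar b_k$ for $\mu>0$; the remaining entries of $\theta_k$ coincide with the QAGD table up to the $\rho$ rescaling.

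With these choices the per-iteration bound reduces to $\mathbb{E}[E_{k+1}-E_k]\leq\bar a_k\epsilon/2$ or $\leq 0$ as stated. Summing and using $D_h(x^*,z_0)\leq R^2$, together with $A_0(f(y_0)-f(x^*))\leq LR^2/(2\bar\mu)$ by $L$-smoothness and $\bar\mu$-strong convexity of $h$, yields the two claimed rates $O(L\rho R^2/(\gamma^2 t^2)+\epsilon/2)$ and $O((1+\gamma/(2\rho\sqrt\kappa))^{-t}E_0)$. The main obstacle I anticipate is pure bookkeeping: verifying that the table constants simultaneously satisfy the three numerical conditions $\beta_k^2\geq L\rho^2/(\bar\mu A_{k+1})$, $\alpha_k B_k/\beta_k=\bar b_k$ (to zero out the residual Bregman term in the strongly-quasar case), and $\alpha_k/\beta_k\leq 1/2$ (to turn the $-\bar\mu B_k\|z_{k+1}-z_k\|^2/2$ into a genuine quadratic slack). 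This is the same constant-chasing as in Appendix~\ref{proofQAGD}, only rescaled by $\rho$, and Proposition~\ref{lowerboundofkappa} should again furnish the $\alpha_k/\beta_k\leq 1/2$ check without complication.
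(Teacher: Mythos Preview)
Your approach is essentially identical to the paper's: carry the QAGD Lyapunov argument of Appendix~\ref{proofQAGD} through with the stochastic gradient, invoke Assumption~\ref{SGC} to bound $\mathbb{E}_i[\|\nabla_k\|^2]\leq\rho\|\nabla f(x_{k+1})\|^2$, and absorb the $\rho$-scaled Fenchel--Young residual into the $1/(\rho L)$-stepsize descent term so that the combined coefficient $\rho/(2\bar\mu\beta_k^2)-A_{k+1}/(2\rho L)$ on $\|\nabla f(x_{k+1})\|^2$ is nonpositive. One minor bookkeeping point: your proposed $A_k=\bar\mu\gamma^2(k+1)^2/(4L\rho^2)$ differs from the paper's Table~\ref{parameterofQASGDSGC}, which takes $A_k=\bar\mu\gamma^2(k+1)^2/(4\rho L)$, and with your choice the telescoped $\mu=0$ rate would carry $\rho^2$ rather than the stated $\rho$, so reconcile this constant when you write out the verification.
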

\begin{table}[H]
\begin{center}
\renewcommand\arraystretch{1.5}
    \begin{tabular}{|c|}
    \hline
    \rowcolor{black!20}
    \textbf{QASGD under SGC}\\
    \hline
    \rowcolor{black!10}
    \textbf{$\gamma$-quasar-convex} $(\mu=0)$\\
    \hline
    $A_k=\frac{\bar{\mu}\gamma^2}{4\rho L}(k+1)^2$, $B_k=1$\\
    $(\alpha_k, \beta_k, \rho_k, \tilde{f},b,c,\tilde{\epsilon})\gets\left(0,\frac{\gamma}{\bar{a}_k},\frac{1}{\rho L}, f, 0, \frac{\gamma A_k}{\bar{a}_k}, \frac{\gamma\epsilon}{2}\right)$\\
    \hline
    \rowcolor{black!10}
    \textbf{$\mu$-strongly $\gamma$-quasar-convex} $(\mu>0)$\\
    \hline
    $A_k=(1+{\gamma}/2\rho{\sqrt{\kappa}})^{k}$, $B_k=\mu A_k$\\
    $(\alpha_k, \beta_k, \rho_k, \tilde{f},b,c,\tilde{\epsilon})\gets\left(\gamma\mu, \frac{\gamma\mu B_k}{\bar{b}_k}, \frac{1}{\rho L}, f, \frac{\gamma\bar{\mu}\mu}{2}, \frac{\gamma A_k}{\bar{a}_k},0\right)$\\
    \hline
    \end{tabular}
    \caption{Parameter choices for QASGD under SGC}
    \label{parameterofQASGDSGC}
    \end{center}
\end{table}
\begin{proof}
\begin{equation}
    \begin{aligned}
    E_{k+1}-E_k
    &\overset{\eqref{TPI}}{=}-\left\langle{\nabla h(z_{k+1})-\nabla h(z_k)},x^*-z_{k+1}\right\rangle-D_h(z_{k+1},z_k)+A_{k+1}(f(y_{k+1})-f(x^*))-A_k(f(y_k)-f(x^*))\\
    &=\frac{1}{\beta_k}\left\langle\nabla_k,x^*-z_{k+1}
    \right\rangle-D_h(z_{k+1},z_k)+{A_{k+1}}(f(y_{k+1})-f(x^*))-{A_k}(f(y_k)-f(x^*))\\
    &\overset{\eqref{st_h}}{\leq}\frac{1}{\beta_k}\left\langle\nabla_k,x^*-z_{k}
    \right\rangle+\frac{1}{\beta_k}\left\langle\nabla_k,z_k-z_{k+1}
    \right\rangle-\frac{\bar{\mu}}{2}\|z_{k+1}-z_k\|^2\\
    &\quad+{A_{k+1}}(f(y_{k+1})-f(x^*))-{A_k}(f(y_k)-f(x^*))\\
    &\overset{\eqref{fenchelyoung}}{\leq}\frac{1}{\beta_k}\left\langle\nabla_k,x^*-z_{k}
    \right\rangle+\frac{1}{2\bar{\mu}\beta_k^2}\|\nabla_k\|^2+{A_{k+1}}(f(y_{k+1})-f(x^*))-{A_k}(f(y_k)-f(x^*))\\
    &=\frac{1}{\beta_k}\langle\nabla_k,x^*-x_{k+1}\rangle+\frac{\tau_k}{\beta_k}\langle\nabla_k,y_k-z_k\rangle+\frac{1}{2\bar{\mu}\beta_k^2}\|\nabla_k\|^2+A_{k+1}(f(y_{k+1})-f(x_{k+1}))\\
    &\quad+(A_{k+1}-A_k)(f(x_{k+1})-f(x^*))+A_k(f(x_{k+1})-f(y_k))\nonumber
    \end{aligned}
\end{equation}
\begin{equation*}
\begin{aligned}
    \mathbb{E}[E_{k+1}-E_k]&\leq \frac{\gamma}{\beta_k}(f(x^*)-f(x_{k+1}))+\frac{1}{\beta_k}(c(f(y_k)-f(x_{k+1}))+\tilde{\epsilon})+\frac{1}{2\bar{\mu}\beta_k^2}\mathbb{E}\left[\|\nabla_k\|^2\right]\\
    &\quad+A_{k+1}\mathbb{E}[f(y_{k+1})-f(x_{k+1})]+(A_{k+1}-A_k)(f(x_{k+1})-f(x^*))+A_k(f(x_{k+1})-f(y_k))\\
    &\leq\frac{\rho}{2\bar{\mu}\beta_k^2}\|\nabla f(x_{k+1})\|^2+A_{k+1}\mathbb{E}[f(y_{k+1})-f(x_{k+1})]+\frac{(A_{k+1}-A_k)}{2}\epsilon\\
    &\overset{\eqref{lsmooth}}{\leq}\left(\frac{\rho}{2\bar{\mu}\beta_k^2}-\frac{A_{k+1}}{2\rho L}\right)\|\nabla f(x_{k+1})\|^2+\frac{(A_{k+1}-A_k)}{2}\epsilon\leq\frac{A_{k+1}-A_k}{2}\epsilon
\end{aligned}
\end{equation*}
\textbf{Case 2:} $\mu>0$
\begin{equation*}
    \begin{aligned}
    E_{k+1}-E_k &\overset{\eqref{TPI}}{=} \bar{b}_k D_h(x^*,z_{k+1})-B_k\langle\nabla h(z_{k+1})-\nabla h(z_k),x^*-z_{k+1}\rangle-B_k D_h(z_{k+1},z_k)+ A_{k+1}(f(y_{k+1})-f(x^*))\\
    &\quad -A_k(f(y_{k})-f(x^*))\\
    &=\bar{b}_k D_h(x^*,z_{k+1})+\frac{\alpha_k B_k}{\beta_k}\langle\nabla h(z_{k+1})-\nabla h(x_{k+1}), x^*-z_{k+1}\rangle+\frac{B_k}{\beta_k}\langle\nabla_k, x^*-z_{k+1}\rangle-B_k D_h(z_{k+1},z_k)\\
    &\quad+A_{k+1}(f(y_{k+1})-f(x^*))-A_k(f(y_{k})-f(x^*))\\
    &\overset{\eqref{TPI}}{=} \left(\bar{b}_k-\frac{\alpha_k B_k}{\beta_k}\right)D_h(x^*,z_{k+1})+\frac{\alpha_k B_k}{\beta_k}\left(D_h(x^*,x_{k+1})-D_h(z_{k+1},x_{k+1})\right)+\frac{B_k}{\beta_k}\langle\nabla_k,x^*-z_k\rangle\\
    &\quad+\frac{B_k}{\beta_k}\langle\nabla_k,z_k-z_{k+1}\rangle-B_k D_h(z_{k+1},z_k)+A_{k+1}(f(y_{k+1})-f(x^*))-A_k(f(y_{k})-f(x^*))\\
    &\overset{\eqref{st_h}}{\leq}\left(\bar{b}_k-\frac{\alpha_k B_k}{\beta_k}\right)D_h(x^*,z_{k+1})+\frac{\alpha_k B_k}{\beta_k}D_h(x^*,x_{k+1})-\frac{\bar{\mu}\alpha_k B_k}{2\beta_k}\|z_{k+1}-x_{k+1}\|^2+\frac{B_k}{\beta_k}\langle\nabla_k,x^*-z_k\rangle\\
    &\quad+\frac{B_k}{\beta_k}\langle\nabla_k,z_k-z_{k+1}\rangle-\frac{\bar{\mu}B_k}{2}\|z_{k+1}-z_k\|^2+A_{k+1}(f(y_{k+1})-f(x^*))-A_k(f(y_{k})-f(x^*))\\
    &\leq \left(\bar{b}_k-\frac{\alpha_k B_k}{\beta_k}\right)D_h(x^*,z_{k+1}) + \frac{\alpha_k B_k}{\beta_k}D_h(x^*,x_{k+1})-\frac{\bar{\mu}\alpha_k B_k}{2\beta_k}\|z_{k+1}-x_{k+1}\|^2+\frac{B_k}{\beta_k}\langle\nabla_k,x^*-x_{k+1}\rangle\\
    &\quad+\frac{B_k}{\beta_k}\left(c(f_i(y_k)-f_i(x_{k+1}))+b\|x_{k+1}-z_k\|^2\right)+\frac{B_k}{\beta_k}\langle\nabla_k,z_k-z_{k+1}\rangle-\frac{\bar{\mu}B_k}{2}\|z_{k+1}-z_k\|^2\\
    &\quad+A_{k+1}(f(y_{k+1})-f(x^*))-A_k(f(y_{k})-f(x^*))\\
    &\leq \frac{\alpha_kB_k}{\beta_k}D_h(x^*,x_{k+1})+\frac{B_k}{\beta_k}\langle\nabla_k,z_k-z_{k+1}\rangle+\left(\frac{\bar{\mu}\alpha_kB_k}{2\beta_k}-\frac{\bar{\mu}B_k}{2}\right)\|z_{k+1}-z_k\|^2+\frac{B_k}{\beta_k}\langle\nabla_k,x^*-x_{k+1}\rangle\\
    &\quad+A_k(f(y_k)-f(x_{k+1}))+A_{k+1}(f(y_{k+1})-f(x^*))-A_k(f(y_{k})-f(x^*))
    \end{aligned}
\end{equation*}
The first equality and the third equality follows from Lemma \ref{Three-point identity}; the second equality follows from mirror descent.
The first inequality follows from the strong convexity of $h$, and the second inequality follows from \eqref{slackEOGa}
(Lemma \ref{Existenceofalpha}). With the choice of $\alpha_k$ and $\beta_k$, we have ${\alpha_kB_k}/{\beta_k}=\bar{b}_k$, which explains the last inequality. Moreover, with the choice of $B_k$ and Observation \ref{lowerboundofkappa}, we have
\begin{equation*}
    \frac{\alpha_k}{\beta_k}=\frac{\bar{b}_k}{B_k}=\frac{\gamma}{2\sqrt{\kappa}}\leq\frac{\gamma}{2}\sqrt{\frac{2-\gamma}{\gamma}}\leq\frac{\gamma}{2}\sqrt{\frac{1}{\gamma^2}}=\frac{1}{2}.
\end{equation*}
Combined with the initial bound and the relation above, we obtain the following bound:
\begin{equation*}
    \begin{aligned}
    E_{k+1}-E_k&\leq\frac{\alpha_kB_k}{\beta_k}D_h(x^*,x_{k+1})+\frac{B_k}{\beta_k}\langle\nabla_k,z_k-z_{k+1}\rangle+\left(\frac{\bar{\mu}\alpha_kB_k}{2\beta_k}-\frac{\bar{\mu}B_k}{2}\right)\|z_{k+1}-z_k\|^2+\frac{B_k}{\beta_k}\langle\nabla_k,x^*-x_{k+1}\rangle\\
    &\quad+A_k(f(y_k)-f(x_{k+1}))+A_{k+1}(f(y_{k+1})-f(x^*))-A_k(f(y_{k})-f(x^*))\\
    &\leq\frac{\alpha_kB_k}{\beta_k}D_h(x^*,x_{k+1})+\frac{B_k}{\beta_k}\langle\nabla_k,z_k-z_{k+1}\rangle-\frac{\bar{\mu}B_k}{4}\|z_{k+1}-z_k\|^2+\frac{B_k}{\beta_k}\langle\nabla_k,x^*-x_{k+1}\rangle\\
    &\quad+A_{k+1}(f(y_{k+1})-f(x_{k+1}))+\bar{a}_k(f(x_{k+1})-f(x^*))\\
    &\overset{\eqref{fenchelyoung}\eqref{uniform-quasar-convex-of-f}}{\leq}\frac{\alpha_kB_k}{\beta_k}D_h(x^*,x_{k+1})+\frac{B_k}{\bar{\mu}\beta_k^2}\|\nabla_k\|^2+\frac{\gamma B_k}{\beta_k}(f_i(x^*)-f_i(x_{k+1})-\mu D_h(x^*,x_{k+1}))\\
    &\quad+A_{k+1}(f(y_{k+1})-f(x_{k+1}))+\bar{a}_k(f(x_{k+1})-f(x^*))
    \end{aligned}
\end{equation*}
Taking the expectation, we obtain
\begin{equation*}
    \begin{aligned}
        \mathbb{E}[E_{k+1}-E_k]&\leq\frac{\alpha_kB_k}{\beta_k}D_h(x^*,x_{k+1})+\frac{B_k}{\bar{\mu}\beta_k^2}\mathbb{E}\left[\|\nabla_k\|^2\right]+\frac{\gamma B_k}{\beta_k}(f(x^*)-f(x_{k+1})-\mu D_h(x^*,x_{k+1})))\\
        &\quad+A_{k+1}\mathbb{E}[f(y_{k+1})-f(x_{k+1})]+\bar{a}_k(f(x_{k+1})-f(x^*))\\
        &\leq\frac{B_k}{\bar{\mu}\beta_k^2}\mathbb{E}\left[\|\nabla_k\|^2\right]+A_{k+1}\mathbb{E}[f(y_{k+1})-f(x_{k+1})]\\
        &\leq\left(\frac{\rho B_k}{\bar{\mu}\beta_k^2}-\frac{A_{k+1}}{2\rho L}\right)\mathbb{E}\left[\|\nabla_k\|^2\right]\leq 0
    \end{aligned}
\end{equation*}
\end{proof}
\section{Additional Simulation Results}
\label{additionalsimul}
\begin{figure*}[h]
\centering 
\subfigure{
\label{Dataset4.1}
\includegraphics[width=5.2cm,height = 3.2cm]{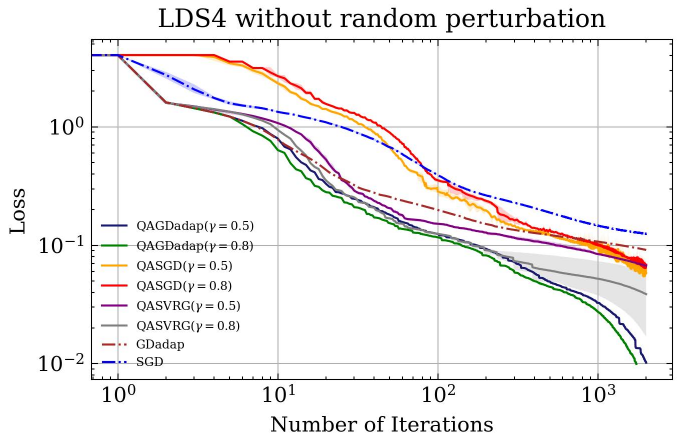}}\subfigure{
\label{Dataset4.2}
\includegraphics[width=5.2cm,height = 3.2cm]{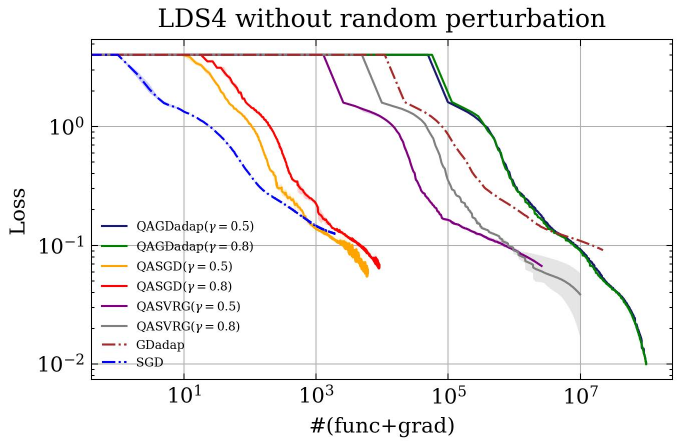}}
\subfigure{
\label{Dataset4.3}
\includegraphics[width=5.2cm,height = 3.2cm]{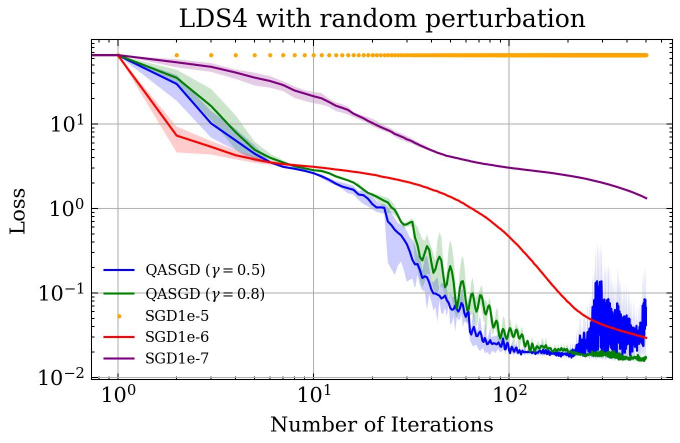}}
\subfigure{
\label{Dataset5.1}
\includegraphics[width=5.2cm,height = 3.2cm]{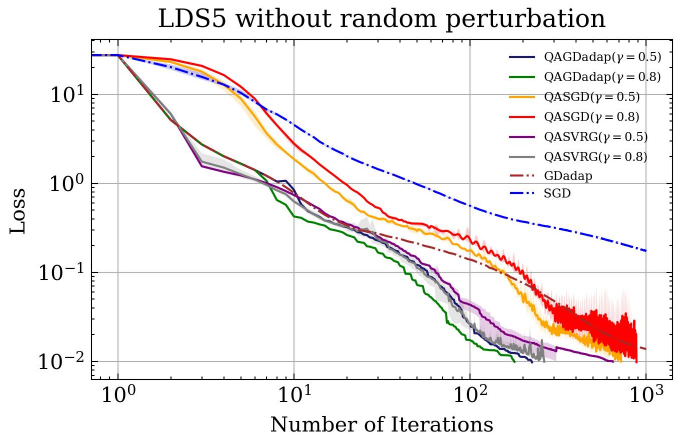}}\subfigure{
\label{Dataset5.2}
\includegraphics[width=5.2cm,height = 3.2cm]{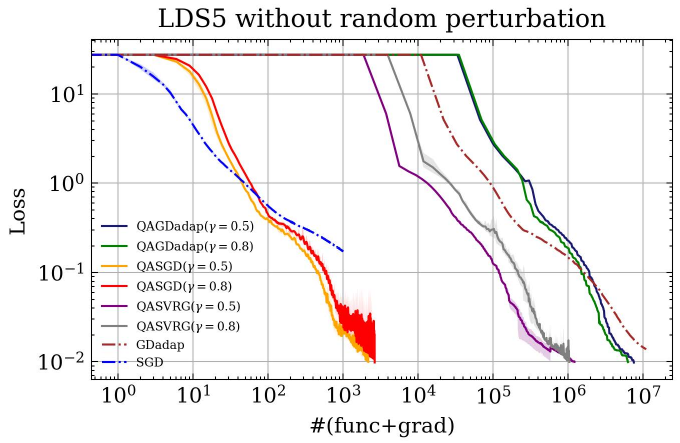}}
\subfigure{
\label{Dataset5.3}
\includegraphics[width=5.2cm,height = 3.2cm]{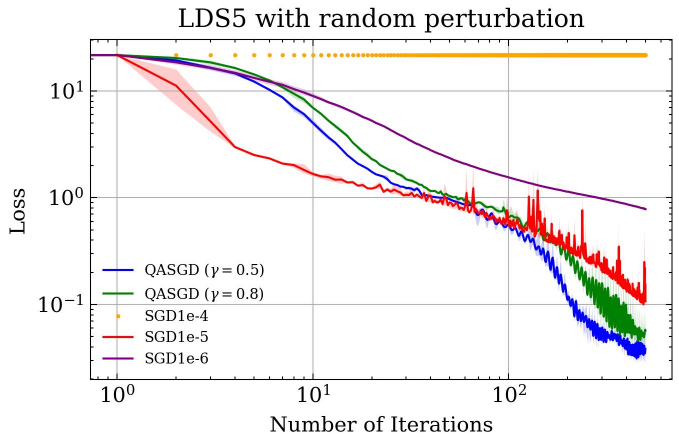}}
\caption{Evaluation on two different LDS instances with random seed in $\{12, 36\}$. We choose $\epsilon=10^{-2}$, the stepsize to be $1\times10^{-6},1\times10^{-5}$ for SGD, $L=1\times 10^7,5\times 10^6$ for QASGD and $L=3\times 10^6,1\times 10^5$ for QASVRG in LDS4 and LDS5. The flat line in the third column means the loss blows up to infinity with this choice of stepsize.}
\label{LDSadditional}
\end{figure*}

\begin{figure*}[ht]
\centering 
\subfigure{
\label{Dataset6.1}
\includegraphics[width=5.2cm,height = 3.2cm]{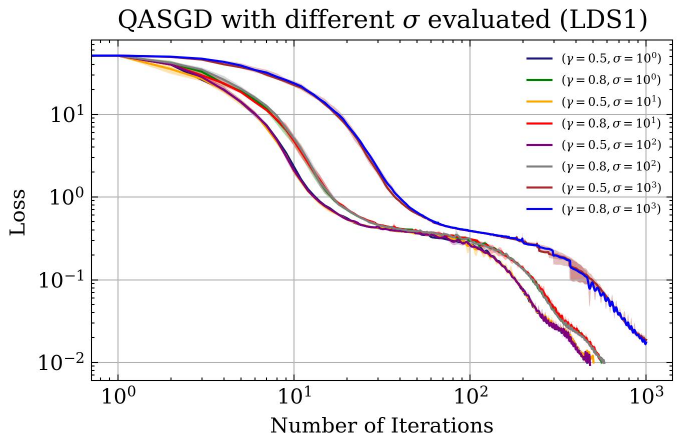}}\subfigure{
\label{Dataset6.2}
\includegraphics[width=5.2cm,height = 3.2cm]{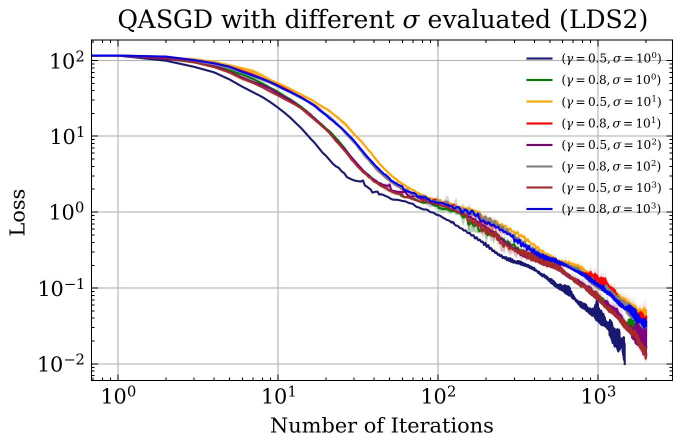}}
\subfigure{
\label{Dataset6.3}
\includegraphics[width=5.2cm,height = 3.2cm]{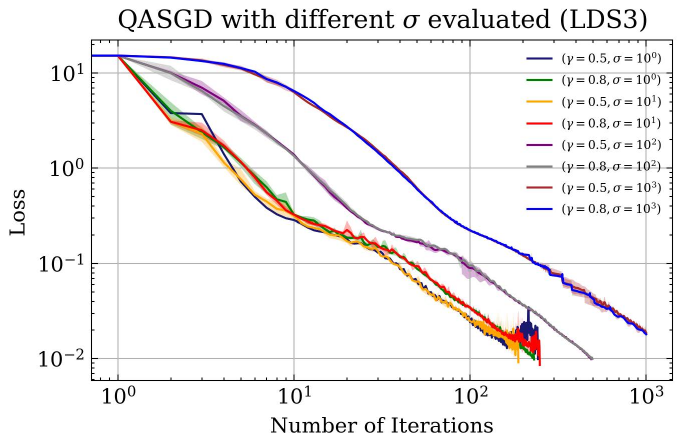}}
\subfigure{
\label{Dataset7.1}
\includegraphics[width=5.2cm,height = 3.2cm]{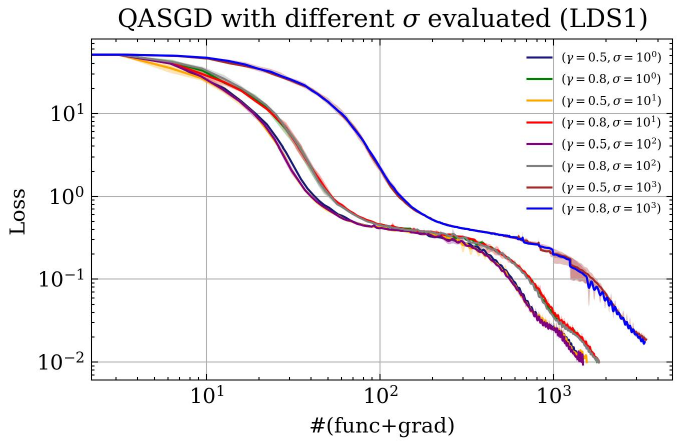}}\subfigure{
\label{Dataset7.2}
\includegraphics[width=5.2cm,height = 3.2cm]{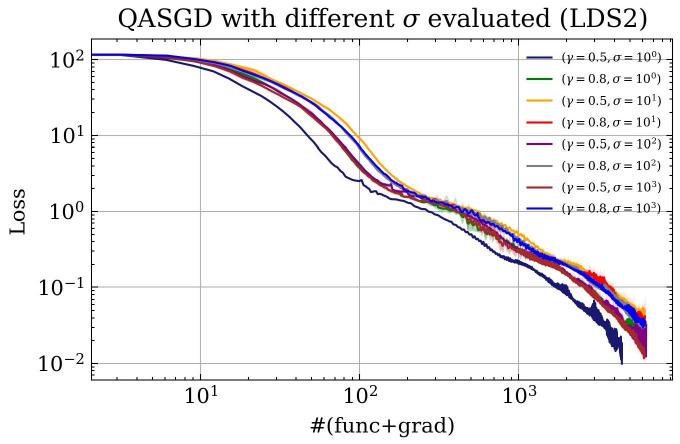}}
\subfigure{
\label{Dataset7.3}
\includegraphics[width=5.2cm,height = 3.2cm]{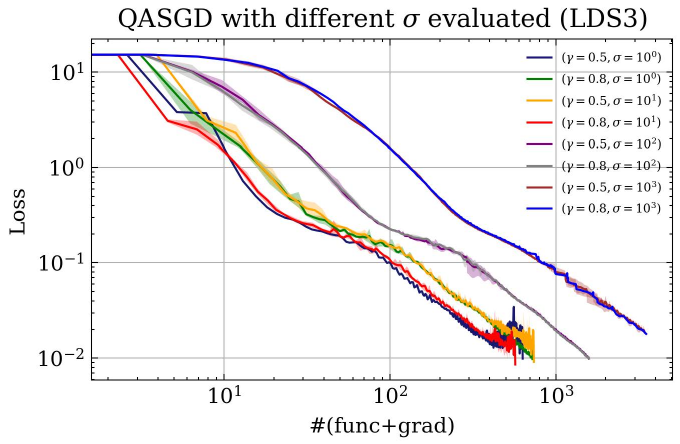}}
\caption{Evaluation of QASGD with different $\sigma$ on three LDS instances}
\label{LDSadditionalsigma}
\end{figure*}
We provide a contrived experiment by constructing an objective satisfying all the assumptions required. Consider the following optimization problem
\begin{equation}
\label{expproblem}
    \min_{x\in\mathbb{R}^d}\left[f(x) = \frac{1}{n}\sum_{i=1}^ng_{\gamma}(b_ia_i^{\mathsf{T}}x)+\frac{\mu}{2}\|x\|^2\right],\quad g_{\gamma}(x)= \begin{cases}
\frac{x^{\gamma}-1}{\gamma}+\frac{1}{2},\quad &x \geq 1, \\
\frac{x^2}{2},\quad &0 \leq x \leq 1,\\
0,\quad &x \leq 0,
\end{cases}
\end{equation}
where $(a_i,b_i)_{i=1,...,n}$ is training data with $a_i\in\mathbb{R}^d$ and $b_i\in\{+1,-1\}$; $\mu\geq 0$, and $f(x)$ satisfies Assumption \ref{boundedgrad}. $f(x)$ is $\mu$-strongly $\gamma$-quasar-convex and $L$-smooth by properties of quasar-convex functions introduced in (\citet{hinder}, D.3), where $L=\sum_{i=1}^n\|a_i\|/n+0.5\mu$. We choose $\gamma\in\{0.5,0.8\}$ and normalize each $a_i$ for simplicity so that $L=1+0.5\mu$. Note that each $g_{\gamma}(b_ia_i^{\mathsf{T}}x)$ has at least one common minimizer. Therefore, Assumption \ref{interpolation} is also satisfied by $f$. We use the following multi-classification dataset from \citet{Dua:2019}, which we treat as binary classification datasets.
We have $n=1372$ and $d=4$ according to the dataset. We set $\epsilon=10^{-2}$ when $\mu=0$, and set $\epsilon=10^{-3}$ when $\mu>0$. We generate the error bar the same way as simulations in section \ref{simulation}. Figure \ref{SVMadditional} shows that QASGD enjoys faster convergence than SGD while QASVRG enjoys fast convergence and lower complexity than QAGD and GD. When $\mu=0.002$,  Figure \ref{SVMadditional} also shows the superiority of QASVRG (Option \rom{1}) in terms of convergence speed and complexity given $\kappa\epsilon\approx0.5<1$.
\begin{figure*}[ht]
\centering 
\subfigure{
\label{Dataset8.1}
\includegraphics[width=4.0cm,height = 2.5cm]{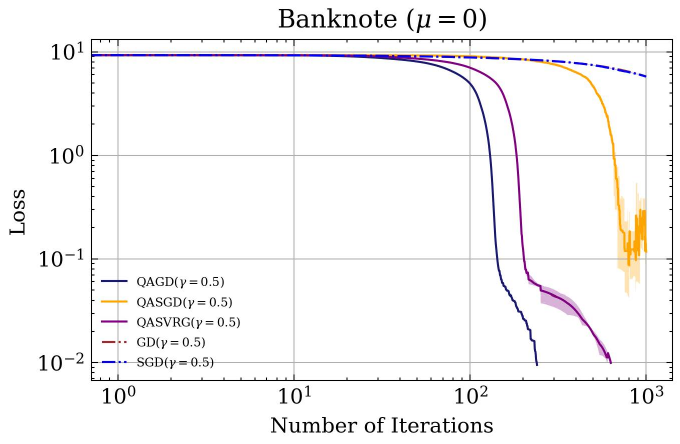}}\subfigure{
\label{Dataset8.2}
\includegraphics[width=4.0cm,height = 2.5cm]{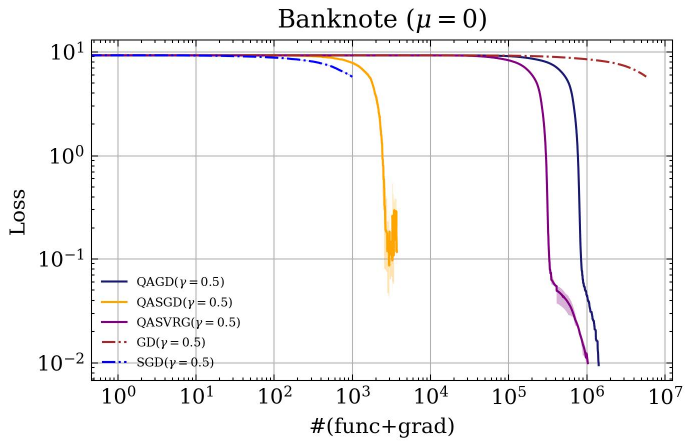}}
\subfigure{
\label{Dataset8.3}
\includegraphics[width=4.0cm,height = 2.5cm]{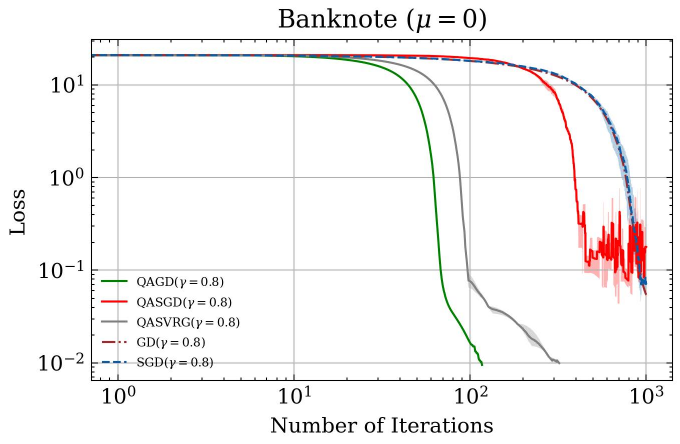}}
\subfigure{
\label{Dataset8.4}
\includegraphics[width=4.0cm,height = 2.5cm]{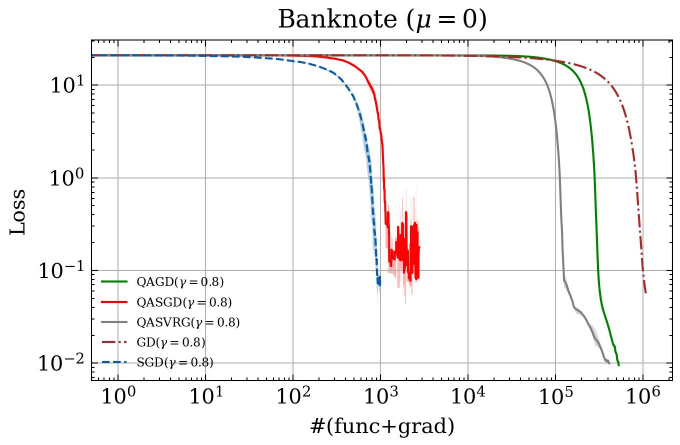}}\\
\subfigure{
\label{Dataset9.1}
\includegraphics[width=4.0cm,height = 2.5cm]{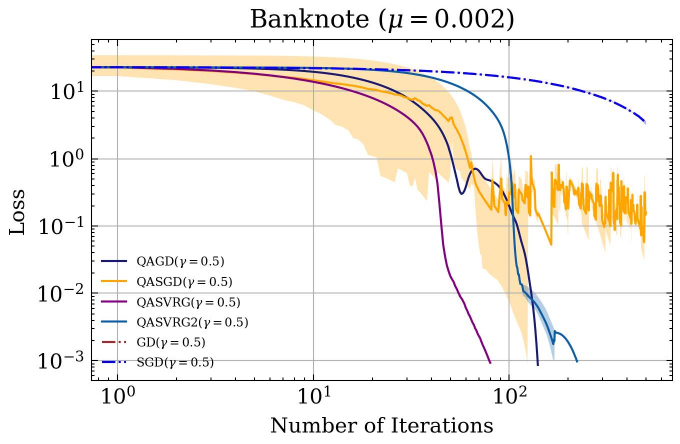}}
\subfigure{
\label{Dataset9.2}
\includegraphics[width=4.0cm,height = 2.5cm]{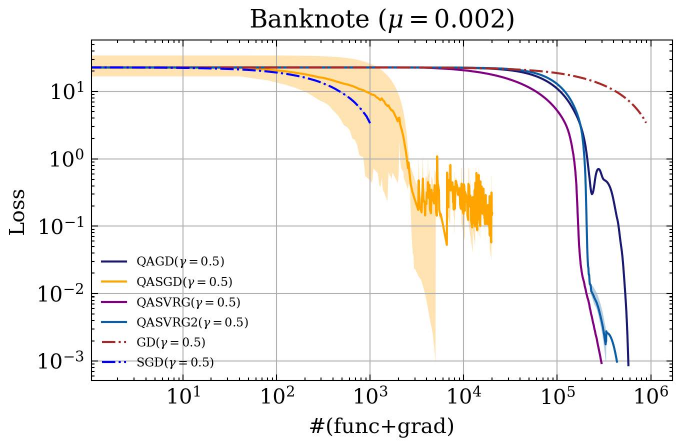}}
\subfigure{
\label{Dataset9.3}
\includegraphics[width=4.0cm,height = 2.5cm]{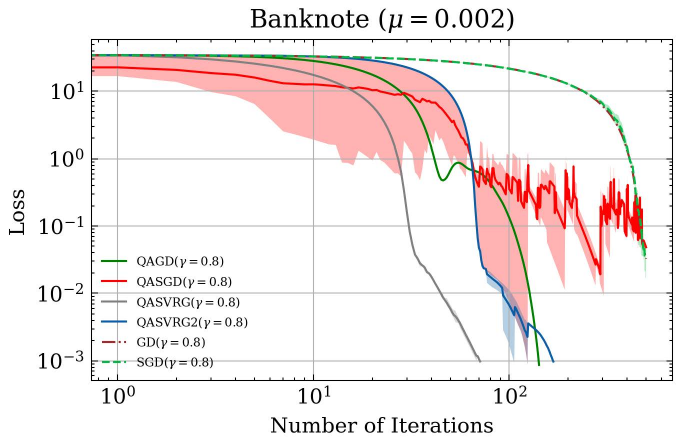}}
\subfigure{
\label{Dataset9.4}
\includegraphics[width=4.0cm,height = 2.5cm]{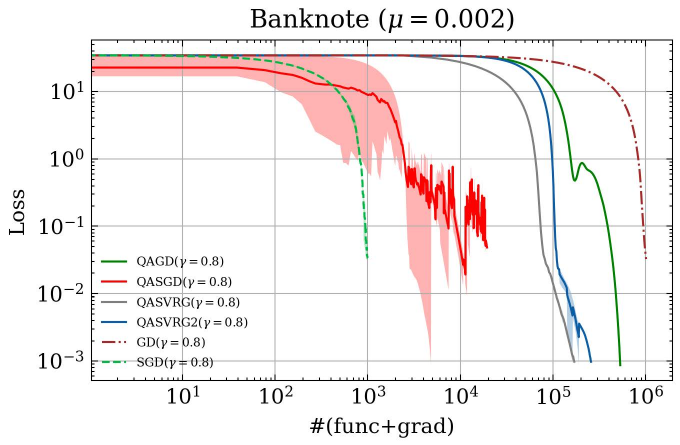}}
\caption{Evaluation of each algorithm on problem $\eqref{expproblem}$}
\label{SVMadditional}
\end{figure*}
We also compare our methods with GD, QAGD and SGD on solving empirical risks of GLM with logistic link function $\sigma(z)=(1+\exp(-z))^{-1}$. Consider the following optimization problem
\begin{equation}
\label{empiricalGLM}
    \min\left\{f(w)=\frac{1}{n}\sum_{i=1}^n\left[\left(\sigma\left(w^{\mathsf{T}}x_i\right)-y_i\right)^2\right]\right\},
\end{equation}
where $x_i\sim \mathcal{N}(0,I)$, $w^*\sim\mathcal{N}(0,I)$ and $y_i=\sigma\left(w_*^{\mathsf{T}}x_i\right)$ for each $i\in[n]$. In our experiment, we choose $n=5000$, $d=50$ and the initial iterate $w_0\sim\mathcal{N}(0,100I)$. Since it is intractable to compute the parameter of quasar-convexity $\gamma$ and smoothness $L$, we evaluate our methods with $\gamma=0.5$ and $L=10^5$ by extensive grid search. 
\begin{figure*}[h!]
\centering 
\subfigure{
\label{Dataset10.1}
\includegraphics[width=4.0cm,height = 2.5cm]{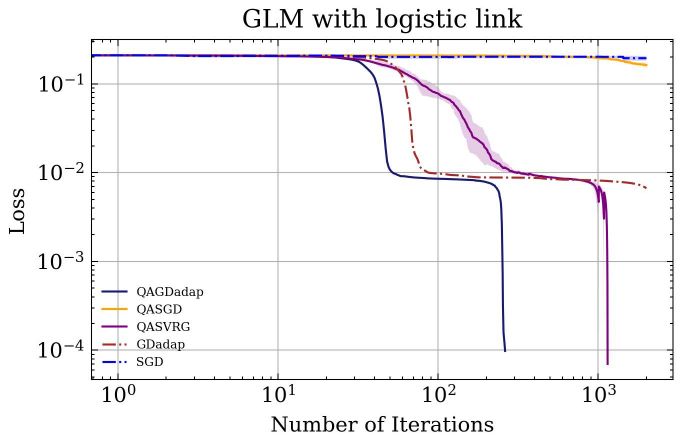}}\subfigure{
\label{Dataset10.2}
\includegraphics[width=4.0cm,height = 2.5cm]{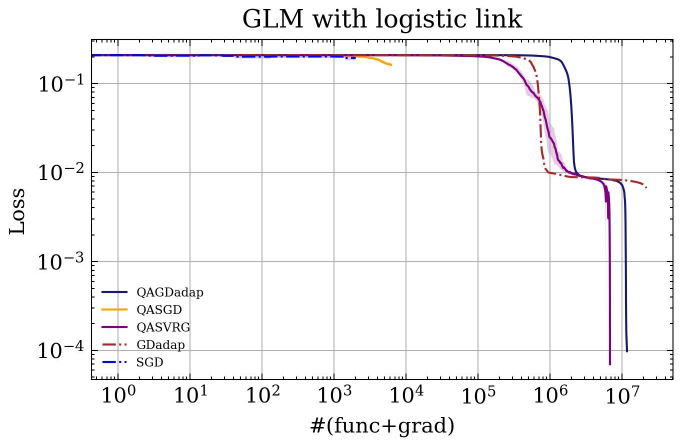}}
\caption{Evaluation of each algorithm on problem $\eqref{empiricalGLM}$}
\label{GLMsimulation}
\end{figure*}

\end{document}